\numberwithin{equation}{section}
\newtheorem{theorem}{Theorem}[section]
\newtheorem{remark}[theorem]{Remark}
\newtheorem{lemma}[theorem]{Lemma}
\newtheorem{proposition}[theorem]{Proposition}
\newtheorem{definition}[theorem]{Definition}
\newtheorem{hyp}{Hypothesis}
\newcommand{\Blue}[1]{{\color{blue}{#1}\color{black}\xspace}}
\newcommand{\av}[1]{{{\displaystyle{\langle} {#1}\displaystyle{\rangle}}}}
\newcommand{\p}{\partial}
\newcommand{\pp}{\partial^2}
\newcommand{\f}[2]{\av{\partial_{\omega} #1 , #2}}
\newcommand{\ff}[3]{\av{\partial^2_{\omega\omega} #1 , (#2, #3)}}
\newcommand{\leqs}{\lesssim}
\newcommand{\ind}{\mathds{1}}
\newcommand{\indic}[1]{\mathds{1}_{\{#1\}}}
\newcommand{\one}[1]{\mathds{1}_{#1}}
\newcommand{\func}[3]{ {#1}:{#2} \xrightarrow{} {#3}}
\newcommand{\bo}[1]{\boldsymbol{#1}}
\newcommand{\abs}[1]{\left\lvert#1\right\rvert}
\newcommand{\norm}[1]{\left\lVert#1\right\rVert}
\newcommand{\indepp}{\perp\!\!\!\!\!\perp}
\newcommand{\EE}{\mathbb{E}}
\newcommand{\PP}{\mathbb{P}}
\newcommand{\RR}{\mathbb{R}}
\newcommand{\NN}{\mathbb{N}}
\newcommand{\Ff}{\mathcal{F}}
\newcommand{\Ee}{\mathcal{E}}
\newcommand{\Xx}{\mathcal{X}}
\newcommand{\Yy}{\mathcal{Y}}
\newcommand{\ak}{\overline{K}}
\newcommand{\vol}{V}
\newcommand{\itgr}{X}
\newcommand{\bvol}{\overline{\vol}}
\newcommand{\bitgr}{\overline{\itgr}}
\newcommand{\btheta}{\overline{\Theta}}
\newcommand{\brho}{\hat{\rho}}
\newcommand{\flowx}{X^{t,x,\omega}}
\newcommand{\Flowx}{X^{t,\bitgr_t,\btheta^t}_T}
\newcommand{\Flowv}{V^{t,\btheta^t}_s}
\newcommand{\bxi}{\overline{X}}
\newcommand{\normt}[1]{\| #1 \|_{[0,T]}}
\newcommand{\mall}[2]{\mathbf{(} \mathbf{D}_{#1} #2 | \boldsymbol{\rho} \mathbf{)}}
\newcommand{\mallext}[2]{\mathbf{\bigg(} \mathbf{D}_{#1} \Big\{ #2 \Big\} \Big| \boldsymbol{\rho} \mathbf{\bigg)}}
\newcommand{\mallw}{\mathbf{D}^{W}}
\newcommand{\mallbw}{\mathbf{D}^{\widehat{W}}}
\newcommand{\var}{\textrm{$\mathbb{V}$\!ar}} 
\newcommand*{\doi}[1]{\href{http://dx.doi.org/#1}{\footnotesize{doi:#1}}}
\title{Weak rough kernel comparison via PPDEs for integrated Volterra processes}
\author[1]{Mireille Bossy\footnote{mireille.bossy@inria.fr}}
\author[2]{Kerlyns Mart\'inez\footnote{kermartinez@udec.cl}}
\author[1]{Paul Maurer\footnote{paul.maurer@inria.fr}}
\affil[1]{Universit{\'e} C{\^o}te d'Azur, Inria, France}
\affil[2]{Departamento de Ingenier\'ia Matem\'atica, Universidad de Concepci\'on, Chile}
\date{January 2025}
\providecommand{\keywords}[1]{\textbf{\textit{Keywords---}} #1}
\begin{document}
\maketitle

\begin{abstract}
Motivated by applications in physics (e.g., turbulence intermittency) and financial mathematics (e.g., rough volatility), this paper examines a family of integrated stochastic Volterra processes characterized by a small Hurst parameter $H<\tfrac{1}{2}$. We investigate the impact of kernel approximation on the integrated process by examining the resulting weak error. Our findings quantify this error in terms of the $L^1$ norm of the difference between the two kernels, as well as the $L^1$ norm of the difference of the squares of these kernels. Our analysis is based on  a path-dependent Feynman-Kac formula and the associated partial differential equation (PPDE), providing a robust and extendible framework for our analysis.
\end{abstract}
\keywords{Stochastic Volterra Equations, Rough volatility models, Path-dependent PDEs, Functional It{\^o} formula, weak quantification of kernel approximation.}

\tableofcontents

\section{Introduction}\label{sec:introduction}

For a fixed final time horizon $T>0$,
we  consider $\func{b,\sigma}{[0,T] \times \RR}{\RR}$ and $\func{K}{[0,T]}{\RR}$, some Borel-measurable functions, where the kernel $K$ may be singular at the origin (see the precise definition of this singularity in Hypothesis \ref{hyp:kernel} in Section \ref{sec:main}). In this setting, we consider the two-dimensional  stochastic integrated Volterra process $(X,V)$ given by
\begin{equation} \label{eq:integrated_model}
\begin{aligned}
\itgr_t = & X_0 + \int_0^t b(s,\vol_s) ds + \int_0^t \sigma(s,\vol_s) \ d B_s, \\
\vol_t = & \int_0^t K(t-u) \ dW_u, \quad t \in [0,T],
\end{aligned}
\end{equation}
where for  $\rho \in [0,1]$, $\brho = \sqrt{1-\rho^2}$,  $(W,B)  = (W,\rho W + \brho \ \widehat{W})$ is a correlated two-dimensional Brownian motion, issued from the standard two-dimensional one $\bo{W} = (W,\widehat{W})$ that equips the  filtered probability space  $(\Omega,\Ff,(\Ff_t)_{t \geq 0},\PP)$.
\medskip

In many applications, observation data can exhibit long-term or short-term dependence, characterized by persistent correlations, as well as self-similarity, when the process displays similar behaviour across different time scales. These properties are observed in financial data and can be effectively modelled using integrated Volterra processes \eqref{eq:integrated_model}. For instance, the so-called rough Bergomi model, introduced in \cite{Gatheral03062018, bayer2016pricing, fukasawa2019volatility} within the context of financial mathematics, can be interpreted as an integrated Volterra process by setting 
\[K(r) \propto r^{H-\frac{1}{2}}, \quad b(t,x) = - \tfrac{1}{2} \exp\left(\nu x - \frac{\nu^2}{2} t^{2H}\right)\quad\text{and}  \quad \sigma(t,x) = \exp\left( \frac{\nu x}{2} - \frac{\nu^2}{4} t^{2H}\right),\]
for a fixed $\nu >0$. This model prescribes that the $\log$-price $X$ of an asset satisfies
\begin{equation*}
\begin{aligned}
X_t & =   \int_0^t \exp\left(\frac{\nu}{2}  C_H^{-1} V_s - \frac{\nu^2}{4} t^{2H}\right) dB_s - \frac{1}{2} \int_0^t \exp\left(\nu  \  C_H^{-1} V_s - \frac{\nu^2}{2} t^{2H}\right)  ds,\\
V_t & = C_H \int_0^t \big(t-u\big)^{H-\frac{1}{2}} d W_u,
\end{aligned}
\end{equation*}
where  the rough volatility $V$  is parametrized with $H \in (0,\frac{1}{2})$ and a constant $C_H > 0$, commonly considered as $C_H = \sqrt{2H}$.
\medskip

In fluid mechanics,  Kolmogorov's refined theory provides a statistical framework that predicts multiscaling behaviour, such as anomalous power laws. These phenomena emerge at the level of fluid velocity increments, which are associated with fluctuations in a related quantity known as energy dissipation, denoted by $\varepsilon_t$ (\cite{kolmogorov1962refinement, frisch1995turbulence}). This behaviour is often referred to as an intermittency phenomenon.

Formulated in the language of multifractal stochastic processes, the Kolmogorov's refined theory describes the behaviour of the dissipation process as an exponentially correlated random process with particular multiscale behaviour for both one-time statistics of ($\varepsilon_t$) and integrated statistics of ($\int_0^t\varepsilon_s ds$). Then, the dissipation process can be approximate as the limit when $H$ goes to zero of integrated Volterra process $(X, V)$ (see \cite{FORDE2022109265}), satisfying 
\begin{equation}\label{eq:dissip_process}
\begin{aligned}
X_t = \int_0^t\varepsilon^H_s ds, \quad \varepsilon^H_t & \propto \exp(\nu \ V_t - \frac{\nu^2}{2} \var[V_t]), \qquad
V_t =  \int_0^t (t-u)^{H-\frac{1}{2}} d W_u,
\end{aligned}
\end{equation}
where $\nu$ is known as the intermittency parameter. 

\subsection*{Motivation for the approximation problem}  
To approximate in time and simulate the integrated Volterra process \eqref{eq:integrated_model}, one can employ the Euler-Maruyama scheme.  
The weak convergence rate of the Euler-Maruyama scheme in the context of rough volatility models has been analysed in \cite{bayer2020weak}, \cite{gassiat2023weak}, and \cite{bonesini2023rough}. However, implementing the Euler scheme to simulate the integrated process $X$ requires  increment sampling of the underlying Volterra process $V$. 
Exact sampling is possible, but its computation may become prohibitively  demanding.  For instance, when considering the dissipation process in \eqref{eq:dissip_process}, the limit kernel in $H\rightarrow 0$ introduces significant challenges for both approximation and simulation due to the explosion of variance. 
This singularity dramatically increases the computational cost of accurately modelling such processes. As a result, it is crucial to quantify some efficient kernel approximations that balance computational feasibility with the need to capture the essential characteristics of the process. A natural approach is first to approximate the kernel $K$.

A typical example of such kernel approximation techniques is the Markovian approximation approach, which goes back to \cite{carmona2000approximation}. This techniques  proposes to approximate the kernel $K$, thought its Laplace transform  by a sum of time-exponential Brownian integrals leading to the simulation of a set of correlated Ornstein-Uhlenbeck processes. The general idea can be outlined as  follows. Consider a positive square-integrable function $K$ on $[0,T]$ that  is completely monotone, meaning that there exists a positive non-decreasing function $\lambda$ such that 
\begin{equation*}
K(t) = \int_0^{+\infty} e^{-t x} \lambda(x) dx, \quad t \in [0,T].
\end{equation*}
Applying the stochastic Fubini theorem (see e.g. \cite{revuz2013continuous}), we can write the Volterra process as
\begin{align*}
V_t =  \int_0^t K(t-u) dW_u = \int_0^t \left( \int_0^{+\infty} e^{-(t-u) x} \lambda(x) dx \right) dW_u = \int_0^{+\infty} \left( \int_0^t e^{-(t-u) x} dW_u \right) \lambda(x) dx.
\end{align*}
For a positive integer $n$, let $(x_i,w_i)_{ \{ 1 \leq i \leq n \} }$ describes a quadrature rule for integrals of the form $\int_0^{+\infty} f(x) \lambda(x) dx$, where $f$ is a continuous function. Using this rule, the Volterra process can be approximated by
\begin{equation*}
\bvol_t = \sum_{i=1}^n w_i Y_t^{(i)},
\end{equation*}
where the processes $Y^{(i)} = \int_0^\cdot e^{-(\cdot-u) x_i} dW_u$ are correlated Ornstein-Uhlenbeck processes starting from zero:
\begin{equation*}
  d Y_t^{(i)} = - x_i Y_t^{(i)} dt + dW_t, \quad Y_0^{(i)} = 0.
\end{equation*}
In this way, the processes $Y^{(i)}$, and consequently the process $\bvol$, can be simulated efficiently, either by exact simulation or by using the Euler-Maruyama scheme.
Additionally, the process $\bvol$ can be expressed as 
\begin{equation*}
  \bvol_t = \int_0^t \ak(t-u) dW_u, \quad \text{with} \quad \ak(t) = \sum_{i=1}^n w_i e^{-t x_i}.
\end{equation*}
Therefore the Markovian approximation of $V$ can be interpreted as approximating $K$ by the above kernel $\ak$.\medskip

Strong  convergence rates for the Markovian approximation have been established in   \cite{carmona2000approximation}, \cite{harms2021} and \cite{alfonsi2024}, while weak convergence rates have been studied more recently, for example, in \cite{bayer2023markovian} and \cite{bayer2023weak}.\medskip

When dealing with singular kernel, Markovian approximation may be combined with a kernel smoothing. 
 For the fractional kernel $K(t) = t^{H-\frac{1}{2}}$, with small $H$, two common approximation methods are: the smoothed fractional kernel  
\begin{equation}\label{eq:kernel_appro_smoothed}
\ak_1(t) = (t+\tau)^{H-\frac{1}{2}}, 
\end{equation}
or the truncated approximation
\begin{equation}\label{eq:kernel_appro_truncated}
\ak_2(t) = t^{H-\frac{1}{2}} \indic{t > \tau} + \tau^{H-\frac{1}{2}} \indic{t \leq \tau},
\end{equation}
where $\tau > 0$ is a  smoothing/truncation parameter. 

Obviously, kernel approximation errors affect the accuracy of the  process $X$. This concern motivates our study of errors arising from the approximation of  $X$, particularly in the context of  developing a semimartingale approximation of the process $V $.

\subsection*{Main results}
Given the positive kernel $K \in L^2([0,T],\RR_+)$ and the corresponding integrated Volterra process $(X,V)$ in \eqref{eq:integrated_model}, we  consider another  kernel $\ak \in L^2([0,T],\RR_+)$, and the related modified integrated Volterra process $(\bitgr, \bvol)$ of $(X,V)$,  given by
\begin{align*}
\bitgr_t = & X_0 + \int_0^t b(s,\bvol_s) ds + \int_0^t  \sigma(s,\bvol_s) d B_s \\
\bvol_t = & \int_0^t \ak(t-u) dW_u.
\end{align*}
Our main result stated in Theorem \ref{thm:weakcvg_integrated} quantifies the weak error between the integrated components  $X$ and $\bitgr$ through the $L^1$-norm of $K-\ak$ and $K^2 - \ak^2$, namely 
\begin{equation*}
| \EE[\phi(\itgr_T)] - \EE[\phi(\bitgr_T)] | \leq C_K \int_0^T \Big\{ \|{K - \ak}\|_{L^1([0,t])} + \|{K^2 - (\ak)^2}\|_{L^1([0,t])} \Big\} dt. 
\end{equation*}
This bound is obtained under assumptions of regularity for the test function $\phi$ and the coefficients $b$ and $\sigma$.  The kernel $K$ is assumed to be of fractional type with a Hurst parameter $H\in(0,\tfrac{1}{2})$.  The constant $C_K$ is explicit in terms of the kernels' $L^2([0,T])$-norm.
\medskip

\subsection*{Method of proof} 
\paragraph{Path-dependent Feynman-Kac formula. } The standard  method for obtaining a weak convergence rate in the Markovian case (consider, for instance, the weak error for the  Euler-Maruyama scheme) relies on the Kolmogorov forward equation and Itô's formula. 
In our case, these two tools are inaccessible because the  process $V$ is non-Markovian and, in general, is not a semimartingale when $K$ is singular. However, recent works by \cite{viens2019} have uncovered a Markovian structure within Volterra processes. The trade-off is that the state space then becomes infinite-dimensional. 
The general idea relies on the following orthogonal decomposition: for $s \geq t \in [0,T]$, we write
\begin{align*}
 V_s = \int_0^{t} K(s-r) dW_r + \int_{t}^{s} K(s-r) dW_r =  \Theta^{t}_{s} + I^{t}_{s}, 
\end{align*}
where we have $\Theta^{t}_{s} \in \ \Ff_{t}$  and $I^{t}_{s} \ \indepp \ \Ff_{t}$. 
From this observation, one deduces that for any test function $\func{\phi}{\RR}{\RR}$,
\begin{align*}
\EE[\phi(X_T) | \Ff_t] & =  \EE\left[ \phi\left( X_{t}  + \int_{t}^T b(V_{s}) d{s} \right) \big| \Ff_{t} \right]  
= \EE\left[ \phi\left( X_{t} + \int_{t}^T  b(\Theta^{t}_{s} + I^{t}_{s}) d{s} \right) \big| \Ff_{t} \right]
  \\ 
& = u(t,X_{t},\Theta^{t}_{[t,T]}),
\end{align*}
with $u(t,x,\omega) = \EE [\phi(X_T^{t,x,\omega})] =  \EE[\phi(X_T) | X_t = x, \Theta^t_{[t,T]} = \omega]$ for all $(t,x,\omega) \in [0,T] \times \mathbb{R} \times C([t,T],\mathbb{R})$.

Consequently, the information contained in the conditional expectation of $\phi(X_T)$ given the Brownian filtration $\Ff_t$ is entirely described by the information contained in $X_t$ and the collection $(\Theta^t_s)_{s \in [t,T]}$. Provided that a satisfactory notion of derivatives for functions with variables in $C^0([0,T])$ can be defined, it then seems reasonable to hope for an equivalent formula to the Kolmogorov backward equation for such a function $u$, thereby enabling a fairly standard analysis of the weak error. Viens and Zhang thus state their trajectory-dependent Itô formula using the Fréchet derivatives of $u$ with respect to $\omega$, under sufficient regularity conditions to include cases where the kernel $K$ is singular. The trajectory-dependent Feynman-Kac formula, suggested in Viens and Zhang's paper, is rigorously demonstrated in \cite{bonesini2023rough}.

\paragraph{Directional derivatives. }
However, the notion of Fréchet derivative tends to be restrictive in this context, as it does not allow for a satisfactory chain rule. Indeed, if $\func{\phi}{\RR}{\RR}$ is a $C^2$ function with $|\phi(x)| + |\phi'(x)| + |\phi''(x)| \leqs 1 + |x|^{\kappa}$, it is unfortunately not true in general that its extension $\tilde{\phi} : L^p(\Omega) \to L^1(\Omega)$, defined by $\tilde{\phi}(X)(\omega) = \phi(X(\omega))$, is Fréchet-differentiable between the spaces $(L^p(\Omega), \|\cdot\|_{L^p(\Omega)})$ and $(L^1(\Omega), \|\cdot\|_{L^1(\Omega)})$. This requires proving the Fréchet-differentiability of all derivatives of $u$ by hand. To streamline these calculations, we instead propose using the notion of Hadamard derivative, which is the weakest notion of derivative that preserves a chain rule (see, for instance, \cite{shapiro1990}).

Indeed, it is possible to show under suitable conditions on $p$ that the above function $\tilde{\phi}$ is Hadamard-differentiable (see Lemma \ref{lm:hadamard_differentiability} in the Appendix below). Then, it suffices to establish the Hadamard-differentiability of $\omega \mapsto X_T^{t,x,\omega}$ in $L^p$ to compute all the derivatives of $\omega \mapsto \EE[\phi(X_T^{t,x,\omega})]$ via the chain rule. Furthermore, a careful reading of the proof of the Itô formula in \cite{viens2019} reveals that the Fréchet nature of the derivatives is not necessary. It suffices to have a Gâteaux derivative that is linear and continuous with respect to its direction. Continuity is already ensured by Hadamard-differentiability, while linearity may usually be directly observed in the obtained derivatives.

For these reasons, we believe that the use of Hadamard derivatives could simplify calculations in future work using these new techniques for more complex cases of functional Itô formulas and path-dependent PDEs. 

\subsection*{Plan of the paper}
In Section \ref{sec:volterra_processes}, as an introduction to the weak estimation proof techniques, we derive the similar estimation on $|\EE[\phi(\bvol_T)] - \EE[\phi(\vol_T)]|$  (see Lemma \ref{lem:volterra_processes}).  
The precise hypotheses and the main theorem are stated in Section \ref{sec:main}. Section \ref{sec:PPDES} is devoted to the proof of the path-dependent  Feynman-Kac formula and the strong regularity of the solution of the Kolmogorov path-dependent PDE. 
Probabilistic expressions of the derivatives of the  Feynman-Kac formula are also developed, which play an essential role in the proof of weak estimates. The proof of the weak error bound is detailed in Section \ref{sec:proof_thm_1}, complemented with Appendix \ref{sec:appendix}.

\subsection*{Notations}
~ \xspace ~ ~ $\bullet$ We let $\NN$ be the set of non-negative integers and $\RR_+$ the set of non-negative real numbers. 
If $(X,\Xx)$ and $(Y,\Yy)$ are two topological spaces, we note $C^0(X,Y)$ (resp. $D^0(X,Y)$) the space of continuous functions (resp. càdlàg functions) from $X$ to $Y$.
If $(X,\| \cdot \|_X)$ and $(Y,\| \cdot \|_Y)$ are two Banach spaces and $p \geq 1$, we note $L^p(X,Y)$ the space of functions $f$ from $X$ to $Y$ such that $\| f \|_{L^p(X,Y)} = \left( \int_X \| f(x) \|_Y^p dx \right)^{\frac{1}{p}} < + \infty$. 
We note $L^p(X)$ the space $L^p(X,\RR)$ and we note $\| \cdot \|_{L^p(X)}$ for $\| \cdot \|_{L^p(X,\RR)}$. We say that a function $\func{f}{X}{Y}$ is Lipschitz if there exists $C>0$ such that $\| f(x) - f(y) \|_Y \leq C \| x - y \|_X$ for every $x,y \in X$, in which case we note $\| f \|_{\text{Lip}} \coloneq \inf_{\| x - y \|_X \leq 1, x \neq y} \{\frac{\| f(x) - f(y) \|_Y}{\| x - y \|_X}\} $ its Lipschitz constant.

$\bullet$ If $x$ and $y$ are two real number, we note $x \wedge y$ (resp. $x \vee y$) the minimum (resp. the maximum) between $x$ and $y$. 

$\bullet$ If $f$ and $g$ are two positive functions, we note $f(x) \leqs g(x)$ if there exists $C > 0$ not dependent on $x$ such that $f(x) \leq C g(x)$. In this paper, we will make use of this notation only when the constant $C$ does not depend on the kernel $K$.

$\bullet$ Let $\func{\omega}{[0,T]}{\RR}$ and $\func{\theta}{[0,T]}{\RR}$ be Borel-measurable functions. We define their concatenation path at time $t$ by
\begin{equation*}
    \forall u \in [0,T], \qquad (\omega \oplus_t \theta)_u = \begin{cases}
        \omega_u, & \text{if } u \leq t, \\
        \theta_u, & \text{if } u > t.
    \end{cases}
\end{equation*}

$\bullet$ We note $\textbf{D}$ the Malliavin derivative operator with respect to $\bo{W} = (W,\widehat{W})$ and $\mathbb{D}^{1,2}(W,\widehat{W})$ its domain of application in $L^2(\Omega)$. For $F \in \mathbb{D}^{1,2}(W,\widehat{W})$ and $s \in [0,T]$, we note $\mall{s}{F} \coloneq \rho \mallw_s F + \brho \mallbw_s F$ with $\mallw$ and $\mallbw$ the Malliavin derivatives with respect to $W$ and $\widehat{W}$.

$\bullet$ In what follows, we will consider a function $K \in L^2([0,T])$, called kernel, and a function $K \in L^2([0,T])$, called approximated kernel. We will extensively make use of the following notations in the proofs:
\begin{equation}
\begin{aligned}
\Delta K \coloneq \ak - K, \qquad \Delta (K^2) \coloneq \ak^2 - K^2,\\
\Sigma K \coloneq \ak + K, \qquad \Sigma (K^2) \coloneq \ak^2 + K^2.
\end{aligned}
\end{equation}
  
\section{Preamble on Volterra processes} \label{sec:volterra_processes}

As a first attempt at quantifying the approximation error $|\EE[\phi(X_T)] - \EE[\phi(\bitgr_T)]|$, in this section, we derive an estimation of the  weak error $|\EE[\phi(\vol_T)] - \EE[\phi(\bvol_T)]|$ in order to introduce the approach we will then use to deal with the integrated process. 

Since the random variable $V_T$ and $\bvol_T$ are Gaussian, a first estimation can be obtained by Gaussian techniques. In particular, we have the following result for the Wasserstein-$1$ distance between two centred Gaussian laws 
(see \citet[Theorem III.1]{chhachhi20231wasserstein}): 
\begin{equation*} 
\mathcal{W}_1\left( \mathcal{N}(0,\sigma^2) , \  \mathcal{N}(0,\overline{\sigma}^2)\right) 
= (1-{\tfrac {2}{\pi }}) \   | \sigma - \overline{\sigma} |.
\end{equation*}

Applying the latter result with $\sigma^2 = \int_0^T K(T-t)^2 dt$ and $\overline{\sigma}^2 = \int_0^T \ak(T-t)^2 dt$,  we obtain that for Lipschitz functions $\phi$ satisfying $| \phi |_{\text{Lip}} \leq 1$,
\begin{equation}\label{eq:gaussian_wasserstein_distance}
\begin{aligned}
|\EE[\phi(\bvol_T)] - \EE[\phi(\vol_T)]| 
& \leq 
(1-{\tfrac {2}{\pi }}) \left|\| K\|_{L^2([0,T])} - \|\ak\|_{L^2([0,T])} \right| \\ 
& \quad = \frac{(1-{\tfrac {2}{\pi }})}{(\| K\|_{L^2([0,T])} + \| \ak\|_{L^2([0,T])})} \left| \int_0^T (\ak(t)^2 - K(t)^2) dt \right|. 
\end{aligned}
\end{equation}
Such Gaussian estimate cannot be extended to the integrated processes $(X,\bitgr)$. However, by introducing a PDE approach, we obtain the following  slightly lower estimate, disregarding any assumptions on $\phi$. 
\begin{lemma}\label{lem:volterra_processes}
For a test function $\phi \in C^2(\RR)$ such that, for all $x\in \RR$,  $|\phi(x)| + |\phi'(x)| + |\phi''(x)| \leqs (1+|x|^{\kappa_{\phi}})$, for some   $\kappa_{\phi}>0$, 
\begin{equation}\label{eq:weak_error_volterra_process}
|\EE[\phi(\bvol_T)] - \EE[\phi(\vol_T)]| \leq C_{T,K,\ak,\phi}  \|\ak^2 - K^2\|_{L^1([0,T])}.
\end{equation}
\end{lemma}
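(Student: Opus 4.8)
The plan is to represent both $\EE[\phi(\vol_T)]$ and $\EE[\phi(\bvol_T)]$ through a single backward heat equation attached to the kernel $K$, and then to interpolate between the two kernels by running It\^o's formula along the Gaussian martingale generated by $\ak$. For $t\in[0,T]$ write $\Theta^{t}_{T}=\int_0^t K(T-r)\,dW_r$ and $\overline{\Theta}^{t}_{T}=\int_0^t \ak(T-r)\,dW_r$; these are continuous $(\Ff_t)$-martingales with $d\langle \Theta^{\cdot}_{T}\rangle_t=K(T-t)^2\,dt$ and $d\langle \overline{\Theta}^{\cdot}_{T}\rangle_t=\ak(T-t)^2\,dt$, and $\overline{\Theta}^{T}_{T}=\bvol_T$. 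Since $\vol_T=\Theta^{t}_{T}+\int_t^T K(T-r)\,dW_r$ with the last term independent of $\Ff_t$, set
\[
u(t,y)=\EE\Big[\phi\Big(y+\int_t^T K(T-r)\,dW_r\Big)\Big]=\EE[\phi(y+N_t)],\qquad N_t\sim\mathcal{N}\Big(0,\int_0^{T-t}K(r)^2\,dr\Big).
\]
Then $u(T,\cdot)=\phi$, the map $y\mapsto u(t,y)$ is $C^2$ (a Gaussian convolution of $\phi$) with $\partial_{yy}u(t,y)=\EE[\phi''(y+N_t)]$, and $u$ solves $\partial_t u(t,y)+\tfrac12 K(T-t)^2\,\partial_{yy}u(t,y)=0$.

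Next I would apply It\^o's formula to $t\mapsto u(t,\overline{\Theta}^{t}_{T})$ on $[0,T]$. Using the PDE to substitute $\partial_t u$, the finite-variation part collapses to $\tfrac12\big(\ak(T-t)^2-K(T-t)^2\big)\partial_{yy}u(t,\overline{\Theta}^{t}_{T})\,dt$, while the martingale part $\int_0^\cdot \partial_y u(t,\overline{\Theta}^{t}_{T})\,\ak(T-t)\,dW_t$ is a true martingale (its $L^2$-norm is controlled via the polynomial growth of $\partial_y u$, the uniform-in-$t$ moment bound on $\overline{\Theta}^{t}_{T}$, and $\ak\in L^2([0,T])$). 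Taking expectations and using $u(0,0)=\EE[\phi(\vol_T)]$ and $u(T,\overline{\Theta}^{T}_{T})=\phi(\bvol_T)$ yields the exact error representation
\[
\EE[\phi(\bvol_T)]-\EE[\phi(\vol_T)]=\frac12\int_0^T \EE\big[\partial_{yy}u(t,\overline{\Theta}^{t}_{T})\big]\,\big(\ak(T-t)^2-K(T-t)^2\big)\,dt.
\]
The bound then follows from elementary estimates: since $N_t$ is independent of $\overline{\Theta}^{t}_{T}$, one has $\EE[\partial_{yy}u(t,\overline{\Theta}^{t}_{T})]=\EE[\phi''(\overline{\Theta}^{t}_{T}+N_t)]$ with $\overline{\Theta}^{t}_{T}+N_t$ centred Gaussian of variance $\int_0^t\ak(T-r)^2dr+\int_t^T K(T-r)^2dr\le \|\ak\|_{L^2([0,T])}^2+\|K\|_{L^2([0,T])}^2$; hence by the growth assumption $|\phi''(x)|\leqs 1+|x|^{\kappa_{\phi}}$ and the Gaussian absolute-moment formula, $\sup_{t\in[0,T]}\big|\EE[\partial_{yy}u(t,\overline{\Theta}^{t}_{T})]\big|\leqs 1+\big(\|\ak\|_{L^2([0,T])}^2+\|K\|_{L^2([0,T])}^2\big)^{\kappa_{\phi}/2}$. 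Plugging this in and bounding the integral by its absolute value gives $|\EE[\phi(\bvol_T)]-\EE[\phi(\vol_T)]|\le C_{T,K,\ak,\phi}\,\|\ak^2-K^2\|_{L^1([0,T])}$, with $C_{T,K,\ak,\phi}$ explicit in the $L^2([0,T])$-norms of the kernels and the growth data of $\phi$.

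The one genuine obstacle is the limited time-regularity of $u$: when $K$ is singular, $t\mapsto\int_0^{T-t}K(r)^2dr$ is merely absolutely continuous and $K(T-t)^2$ only $L^1$, so the classical It\^o formula does not literally apply to $u(t,\overline{\Theta}^{t}_{T})$. I would circumvent this by mollification --- replace $K,\ak$ by smooth kernels $K_\varepsilon,\ak_\varepsilon$ converging to $K,\ak$ in $L^2([0,T])$, prove the representation and the bound at level $\varepsilon$ with a constant depending only on the ($\varepsilon$-uniformly bounded) $L^2$-norms, and pass to the limit: $L^2$-convergence of the kernels forces convergence in law of $\vol_T,\bvol_T$, which together with uniform Gaussian moment bounds (hence uniform integrability) passes through $\EE[\phi(\cdot)]$, while $K_\varepsilon^2\to K^2$ and $\ak_\varepsilon^2\to\ak^2$ in $L^1([0,T])$. (Alternatively one may invoke a time-change/occupation-density It\^o formula for the absolutely continuous bracket $\langle\overline{\Theta}^{\cdot}_{T}\rangle$.) This warm-up already exhibits, in finite dimension, the mechanism --- write a Feynman--Kac function for one kernel, run It\^o along the other, read off the drift mismatch --- that the path-dependent argument will reproduce for the integrated process.
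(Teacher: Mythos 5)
Your proposal is correct and follows essentially the same route as the paper: the same backward heat equation $\p_t u + \tfrac12 K(T-t)^2\pp_{xx}u=0$ for $u(t,y)=\EE[\phi(y+\int_t^T K(T-r)\,dW_r)]$, the same It\^o expansion along the martingale $\btheta^t_T$ generated by $\ak$, the same exact error representation, and the same Gaussian moment bounds to control $\sup_t\EE[\phi''(\cdot)]$ and to justify that the stochastic integral is a true martingale. The only divergence is in how the lack of time-regularity for singular $K$ is repaired: you mollify both kernels and pass to the limit (which works, since $\|K_\varepsilon^2-K^2\|_{L^1}\le\|K_\varepsilon-K\|_{L^2}\|K_\varepsilon+K\|_{L^2}$), whereas the paper instead swaps the roles of the kernels, using the well-posed heat equation associated with the non-singular $\ak$ and running It\^o along $\Theta^t_T$; both fixes are legitimate.
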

This approach then can be extended to $(X, \bitgr)$ as we will demonstrate in the rest of the paper. 
Note that, in the application case of the Markovian approximation introduced in Section \ref{sec:introduction}, choosing a Gaussian quadrature will systematically result in $\ak \leq K$ (see  e.g., \citet[Corollary D.2]{bayer2023weak}), and consequently, the two estimates above will coincide on the order $\|\ak^2 - K^2\|_{L^1([0,T])}$. 
\medskip

A similar result is provided in the work of \cite{alfonsi2024}, where the authors establish an $L^2$-strong kernel-comparison theorem for the case of fully stochastic Volterra equations.
More precisely, defining $(V, \ak)$ satisfying 
\begin{equation}\label{eq:volterra_sdes}
\begin{aligned}
V_t & = v_0 + \int_0^t K(t-s) b(V_s) ds + \int_0^t K(t-s) \sigma(V_s) dW_s, \\
\overline{V}_t & = v_0 + \int_0^t \ak(t-s) b(\overline{V}_s) ds + \int_0^t \ak(t-s) \sigma(\overline{V}_s) dW_s,
\end{aligned}
\end{equation}
with $v_0 \in \RR$ and $b,\sigma$ being globally Lipschitz functions, the authors showed that there exists a constant $\tilde{C}_{T,K,\ak}$ such that for any $t \in [0,T]$,
\begin{equation*} 
\EE[ |V_t - \overline{V}_t|^2] \leq \tilde{C}_{T,K,\ak} \int_0^t |\ak(s) - K(s)|^2 ds.
\end{equation*}
Consequently, for any Lipschitz function $\phi$,
\begin{equation}\label{eq:alfonsi_strong_error}
\left|\EE[\phi(\bvol_T)] - \EE[\phi(\vol_T)]\right| \leq \| \phi \|_{\text{Lip}} \ \tilde{C}_{T,K,\ak}^{1/2} \ \|\ak - K\|_{L^2([0,T])}.
\end{equation}
It comes out that $\|\ak - K\|_{L^2([0,T])}$ in \eqref{eq:alfonsi_strong_error} is less accurate than $\|\ak^2 - K^2\|_{L^1([0,T])}$ in \eqref{eq:weak_error_volterra_process} and \eqref{eq:gaussian_wasserstein_distance}. However, the direct use of a strong probabilistic error already highlights the complexity of Volterra SDEs \eqref{eq:volterra_sdes}.\medskip

The interest of the technique employed to derive \eqref{eq:weak_error_volterra_process} lies in its adaptability to more general stochastic models, including the integrated model \eqref{eq:integrated_model}. This is made possible by recent advances in path-dependent PDEs and thanks to the functional Itô formula, as introduced in \cite{viens2019} and \cite{bonesini2023rough}. These tools even enable the derivation of an $L^1$-type error bounds in some specific cases. This approach has a strong potential to apply to stochastic Volterra equations with fully dependent coefficients, as studied in \cite{alfonsi2024}. However, establishing the regularity of the path-dependent Feynman-Kac formula in this setting would be significantly more technical.
\medskip

We now prove Lemma \ref{lem:volterra_processes}. We introduce the flow processes  
\begin{equation*}
V_{T}^{t,x} = x + \int_t^T K(T-u) dW_u, \qquad  \bvol_{T}^{t,x} = x + \int_t^T \ak(T-u) dW_u
\end{equation*}
and the $(\mathcal{F}_t)_{t \in [0,T]}$-martingales
\begin{equation*}
\Theta^t_T = \int_0^t K(T-u) dW_u, \qquad \btheta^t_T = \int_0^t \ak(T-u) dW_u, 
\end{equation*}
so that 
\begin{align*}
V_T = \btheta^t_T + \int_t^T K(T-u) dW_u,\qquad
\bvol_T = \btheta^t_T + \int_t^T \ak(T-u) dW_u. 
\end{align*}
Considering the martingale $Y_t = \EE[\phi(V_T)/ \mathcal{F}_t]$ and using the above orthogonal decompositions, we observe that 
\[Y_t = \EE[ \phi( V_T^{t, \Theta^t_T} ) / \mathcal{F}_t] = u(t,\Theta^{t}_{T})\]
where the map $x\mapsto u(t,x) = \EE[ \phi( V_T^{t, x} )] = \EE[ \phi( x + \int_t^T K(T-u) dW_u)]$  inherits from $\phi$ the regularity in $x$, and is twice differentiable, with the representation
\begin{equation*}
\p_x u(t,x) = \EE[\phi'(V_T^{t,x})], \quad \pp_{xx} u(t,x) = \EE[\phi''(V_T^{t,x})].
\end{equation*} 
By formally applying the It\^{o} formula to the martingale  $Y_t = u(t,\Theta^{t}_{T})$, we immediately deduce that  $(t,x)\mapsto u(t,x)$ may satisfy the Kolmogorov backward equation
\begin{align}\label{eq:PDE_for_V}
\p_t u(t,x) + \frac{1}{2} K(T-t)^2 \pp_{xx} u(t,x) = 0,
\end{align}
with terminal condition $u(T,x) = \phi(x)$ for every $x \in \RR$. This Cauchy problem has a unique classical solution on rather standard assumptions, at least when $K$ is not singular on $[0,T]$ (see  e.g, \citet[Chap. 6]{friedman1975stochastic}).

The time in-homogeneous PDE \eqref{eq:PDE_for_V}, already introduced in \citet[Section 2.2]{viens2019}, allows a description of the marginals of the non-Markov process $\vol$ at a given time, but is not enough to track the law of the whole process $t \mapsto \vol_t$. 
But as in the Markov case, it is nevertheless enough to analyse the weak error between $\phi(\vol_T)$ and $\phi(\bvol_T)$, which depends only on the marginal of $\vol$ at final time $T$. 
Indeed, applying the It\^o formula to the martingale $(\btheta_T^t)_{t \in [0,T]}$ and the Kolmogorov PDE, we have
\begin{equation} \label{eq:ito_formula_btheta_finaltime}
\begin{aligned}
&  \EE[\phi(\bvol_T)] - \EE[\phi(\vol_T)] =   \EE[u(T,\btheta^T_T) - u(0,\btheta^0_T)] \\
&\qquad = \EE \left[ \int_0^T \p_t u(t,\btheta^t_T) dt + \int_0^T \p_x u(t,\btheta^t_T) d\btheta^t_T +  \frac{1}{2} \int_0^T \pp_{xx} u(\btheta^t_T) d\langle \btheta^\cdot_T\rangle_t \ \right] \\
& \qquad =  \EE \Bigg[ - \frac{1}{2} \int_0^T K^2(T-t) \pp_{xx} u(\btheta^t_T) dt + \int_0^T \p_x u(t,\btheta^t_T) d\btheta^t_T  + \frac{1}{2} \int_0^T \pp_{xx} u(\btheta^t_T) \ak^2(T-t) dt \ \Bigg].
\end{aligned}
\end{equation}
The process $\left( \int_0^T \p_x u(t,\btheta^t_T) d\btheta^t_T \right)_{t \in [0,T]}$ is a local martingale and the expectation of its quadratic variation at final time is given by
\begin{align*}
\EE \left[ \left\langle \int_0^{\cdot} \p_x u(t,\btheta^t_T) d\btheta^t_T \right\rangle_T  \right] 
= & \EE \left[ \int_0^T \ak^2(T-t) \  \p_x u(t,\btheta^t_T)^2 dt \right] 
= \int_0^T \ak^2(T-t) \ \big(\EE[\phi'(V_T^{t,\btheta^t_T})]\big)^2 dt .
\end{align*}
Using Jensen inequality and the growth control hypothesis $|\phi'(x)| \leqs 1 + |x|^{\kappa_{\phi}}$ and the inequality $(x+y)^2 \leq 2(x^2+y^2)$ we have that 
\begin{equation*}
  \big(\EE[ \phi'(V_T^{t,\btheta^t_T})]\big)^2 \leqs 2 \bigg(1+ \EE[(V_T^{t,\btheta^t_T})^{2\kappa_{\phi}}]\bigg).
\end{equation*}
Since the random variable $V_T^{t,\btheta^t_T} = \int_0^t \ak(T-u) dW_u + \int_t^T K(T-u) dW_u$ has a centred Gaussian distribution with variance $\sigma^2 = \int_0^t \ak(T-u)^2 du + \int_t^T K(T-u)^2 du$, we have (considering without loss of generality $\kappa_\phi$ as a positive integer), 
\begin{equation*}
  \EE[(V_T^{t,\btheta^t_T})^{2 \kappa_{\phi}}] = \left( \int_0^t \ak(T-u)^2 du + \int_t^T K(T-u)^2 du \right)^{2 \kappa_{\phi}} \frac{2^{\kappa_{\phi}} }{\sqrt{\pi}} \ \Gamma(\kappa_{\phi} + \frac{1}{2}).
\end{equation*}
Then assuming $\ak$ can be bounded by $K$ in the sense that there exists $c_{K,\ak} >0$ such that $\ak(r) \leq c_{K,\ak} K(r)$ for every $r \in [0,T]$, we obtain the estimate
\begin{equation*}
  \big(\EE[ \phi'(V_T^{t,\btheta^t_T})]\big)^2 \leqs 2 (1+c_{K,\ak})^{2\kappa_\phi} \left(1+ \frac{2^{\kappa_{\phi}}}{\sqrt{\pi}} \Gamma(\kappa_{\phi} + \frac{1}{2}) \left( \int_0^T K(T-u)^2 du \right)^{2 \kappa_{\phi}}\right).
\end{equation*}
Since $K \in L^2([0,T])$ it follows that $\sup_{t \in [0,T]} \EE[\phi'(V_T^{t,\btheta^t_T})]^2 < + \infty$ and the process $\left( \int_0^T \p_x u(t,\btheta^t_T) d\btheta^t_T \right)_{t \in [0,T]}$ is a square integrable martingale. We can hence rewrite the identity \eqref{eq:ito_formula_btheta_finaltime} as
\begin{equation} \label{eq:error_expansion_volterra_process}
  \EE[\phi(\bvol_T)] - \EE[\phi(\vol_T)] = \frac{1}{2} \int_0^T (\ak(T-t)^2 - K(T-t)^2) \ \EE[\phi''(V_T^{t,\btheta^t_T})] dt.
\end{equation}
Following the same arguments as above, it is easy to show that $\sup_{t \in [0,T]} \EE[\phi''(V_T^{t,\btheta^t_T})] < + \infty$. Then, a simple triangle inequality ensures the existence of a constant $C_{T,K,\ak,\phi} >0$ leading to \eqref{eq:weak_error_volterra_process}. 

This proof is not entirely rigorous in the case where the kernels $K$, $\ak$ are only in $L^2([0,T])$ and are not necessarily uniformly bounded.  In general, the $\ak$ approximation eliminates the singularity by truncation, as in the examples presented in the introduction.    It is then sufficient to use the well-posed heat equation in $\ak$, having the solution $x\mapsto \overline{u}(t,x) = \EE[ \phi( \bvol_T^{t, x} )] = \EE[ \phi( x + \int_t^T \ak(T-u) dW_u)]$ and apply the Itô  formula to $ \EE[\overline{u}(T,\Theta^T_T) - \overline{u}(0,\Theta^0_T)] $ that leads to the same conclusion.

\section{Main results}\label{sec:main}

We consider the integrated Volterra process $(\bitgr, \bvol)$, associated to a modified kernel $\ak$ and given by
\begin{equation}\label{eq:modified_integrated_volterra}
\begin{aligned}
\bitgr_t = & X_0 + \int_0^t b(s,\bvol_s) ds + \int_0^t  \sigma(s,\bvol_s) d B_s \\
\bvol_t = & \int_0^t \ak(t-u) dW_u,
\end{aligned}
\end{equation}
approximating the integrated Volterra process \eqref{eq:integrated_model}. 

Let $\func{\phi}{\RR}{\RR}$ be a test function. We are now interested in the weak error between $\phi(\itgr_T)$ and $\phi(\bitgr_T)$. We will make use of the following assumptions.

\begin{hyp}{\hspace{-0.1cm}\Blue{\bf  \textrm Regularity of $\phi$, $b$ and $\sigma$. (\ref{hyp:regularity}).}}  \makeatletter\def\@currentlabel{{\bf\textrm{H}}$_{\mbox{\scriptsize\bf\textrm{Reg}}}$}\makeatother   \label{hyp:regularity}
$\phi \in C^4(\RR,\RR)$ and there exists $\kappa_{\phi} > 0$ such that $|f(x)| \leqs (1+|x|^{\kappa_{\phi}})$ for $f \in \{ \phi, \phi', \phi'', \phi''', \phi''''\}$ and $x \in \RR$.
  One has $b,\sigma  (t,\cdot) \in C^3(\RR,\RR)$ for almost every $t \in [0,T]$ and there exists $\nu_{b}, \nu_{\sigma} > 0$ such that $|f(s,x)| \leqs 1+e^{\nu_{b}(x+s)}$ for $f \in \{ b, \p_{x} b , \p_{xx}^2 b  \}$ and $(s,x) \in [0,T] \times \RR$, and such that $|g(s,x)| \leqs 1+e^{\nu_{\sigma}(x+s)}$ for $g \in \{ \sigma, \p_{x} \sigma , \p_{xx}^2  \sigma  \}$ and $(s,x) \in [0,T] \times \RR$. In addition, one has $X_0 \in L^p(\Omega)$ for every $p \geq 1$. Regarding the time variable, for almost every $x \in \RR$, assume that $h(\cdot,x) \in C^0([0,T],\RR)$ for $h \in \{b,\p_{x} b ,\p_{xx}^2 b ,\sigma,\p_{x} \sigma ,\p_{xx}^2  \sigma \}$.
\end{hyp}

\begin{hyp}{\hspace{-0.1cm}\Blue{\bf  \textrm Condition on $K$ and $\ak$. (\ref{hyp:kernel}).}}
\makeatletter\def\@currentlabel{{\bf\textrm{H}}$_{\mbox{\scriptsize\bf\textrm{Ker}}}$}\makeatother
\label{hyp:kernel}
$K \in L^2([0,T],\RR_+)$ and $\ak \in L^2([0,T],\RR_+)$. The function $t \mapsto K(t)$ is non-increasing, continuous on $(0,T]$ and differentiable almost everywhere on $(0,T]$, and there exists $H \in (0,\frac{1}{2})$ such that for any $t \in (0,T]$,
\begin{equation*}
K(t) \leqs t^{H-\frac{1}{2}}, \quad |K'(t)| \leqs t^{H-\frac{3}{2}}.
\end{equation*}
The function $\ak$ is continuous on $(0,T]$, and there exists $c_{K,\ak} > 0$ such that
\begin{align*}
\text{for all} ~ r \in [0,T],  \quad \ak(r) \leq c_{K,\ak} \ K(r).
\end{align*}
\end{hyp}

\begin{theorem}[Weak comparison] \label{thm:weakcvg_integrated}
Assume that \ref{hyp:regularity} and \ref{hyp:kernel} hold. Then, considering the intergated Volterra processes \eqref{eq:integrated_model} and \eqref{eq:modified_integrated_volterra}, $(X,V)$ and $(\bitgr,\bvol)$, 
\begin{equation*}
    | \EE[\phi(\itgr_T)] - \EE[\phi(\bitgr_T)] | \leq C_K \int_0^T \Big\{ \|{K - \ak}\|_{L^1([0,t])} + \|{K^2 - \ak^2}\|_{L^1([0,t])} \Big\} dt
\end{equation*}
with \begin{equation*}
C_K \coloneq  C (1 +  \indic{\rho > 0} (\|K\|_{L^2([0,T])}+ \|K\|^2_{L^2([0,T])}) ) \ (1+\exp(C \ \|{K} \|_{L^2([0,T])}^2 \vee \|{\ak} \|_{L^2([0,T])}^2)),
\end{equation*}
where $C > 0$ does not depend on $K$.
\end{theorem}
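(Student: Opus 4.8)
The plan is to mimic the proof of Lemma~\ref{lem:volterra_processes}, but now the relevant function $u$ is path-dependent: for $(t,x,\omega) \in [0,T] \times \RR \times C([t,T],\RR)$ we set $u(t,x,\omega) = \EE[\phi(\Flowx)]$ where $\Flowx$ is the flow of $X$ started at time $t$ from $x$ with frozen path $\omega$ playing the role of $\Theta^t_{[t,T]}$. As explained in the introduction via the orthogonal decomposition $V_s = \Theta^t_s + I^t_s$, we have $\EE[\phi(X_T)\mid \Ff_t] = u(t,X_t,\Theta^t_{[t,T]})$, and the first task (done in Section~\ref{sec:PPDES}) is to establish the path-dependent Feynman--Kac formula: $u$ is a strong solution of the Kolmogorov PPDE
\begin{equation*}
\p_t u + \f{u}{K(\cdot - t)} + \tfrac12 b(t,\omega_t) \p_x u + \tfrac12 \sigma^2(t,\omega_t) \pp_{xx} u + \rho\, \sigma(t,\omega_t)\, \f{\p_x u}{K(\cdot-t)\, \text{(suitable pairing)}} = 0,
\end{equation*}
with terminal condition $u(T,x,\omega) = \phi(x)$, where $\f{u}{\cdot}$ denotes the appropriate first-order directional (Hadamard/Gâteaux) derivative of $u$ in the $\omega$ variable paired against the kernel direction, and one also needs the analogue of the It\^o second-order term $\tfrac12\ff{u}{K(\cdot-t)}{K(\cdot-t)}$. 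The regularity statements of Section~\ref{sec:PPDES} — that $u$ is $C^{1,2}$ in $(t,x)$, twice Hadamard-differentiable in $\omega$, with all derivatives having polynomial-type growth controlled by $\|K\|_{L^2}$ — are taken as given here, together with the probabilistic (Bismut--Elworthy--Malliavin-type) representations of $\p_x u$, $\pp_{xx} u$, $\f{u}{\cdot}$, $\ff{u}{\cdot}{\cdot}$ in terms of $\EE[\phi'(\Flowx)(\cdots)]$ etc., which is exactly where the $\mall{s}{\cdot}$ notation and Malliavin calculus from the notations section enter.

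Next I would apply the functional It\^o formula to the process $t \mapsto u(t,\bitgr_t,\btheta^t_{[t,T]})$, where $\btheta^t_s = \int_0^t \ak(s-r)\,dW_r$ is the $(\Ff_t)$-martingale part of $\bvol$ with the approximated kernel. Because $u$ solves the PPDE associated to $K$ rather than to $\ak$, the drift, diffusion and directional-derivative terms do not cancel exactly; instead, after using the PPDE to substitute for $\p_t u$, the bounded-variation part of the It\^o expansion collapses to a residual involving precisely the differences $\Delta K = \ak - K$ and $\Delta(K^2) = \ak^2 - K^2$. Concretely, one expects an identity of the schematic form
\begin{equation*}
\EE[\phi(\bitgr_T)] - \EE[\phi(\itgr_T)] = \EE\!\left[ \int_0^T \Big( (b(t,\bvol_t)-b(t,\btheta^t_t))\,\p_x u + \tfrac12(\sigma^2(t,\bvol_t)-\sigma^2(t,\btheta^t_t))\,\pp_{xx} u + (\text{terms in } \Delta K,\ \Delta(K^2)) \Big) dt \right],
\end{equation*}
modulo the local-martingale terms, which one must first argue are true martingales (using the $L^2$ integrability of the derivative representations, as in the Volterra preamble — here the exponential growth of $b,\sigma$ forces the use of the exponential-moment bounds for Gaussian variables, producing the $\exp(C\|K\|_{L^2}^2 \vee \|\ak\|_{L^2}^2)$ factor in $C_K$). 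The discrepancies $b(t,\bvol_t) - b(t,\btheta^t_t)$ and $\sigma^2(t,\bvol_t) - \sigma^2(t,\btheta^t_t)$ are then controlled by a mean-value argument: $\bvol_t - \btheta^t_t = \int_0^t \Delta K(t-u)\,dW_u$ (note $\btheta^t_t = \int_0^t \ak(t-u)dW_u = \bvol_t$ — one must be careful here, $\btheta^t_{[t,T]}$ is the frozen path and $\btheta^t_t$ its current value, so in fact the relevant comparison is between the flow driven by $K$ and the flow driven by $\ak$ along the same Brownian path), giving an $L^p(\Omega)$ bound of order $\|\Delta K\|_{L^2([0,t])}$, or after a further Cauchy--Schwarz / factorization step, of order $\|\Delta K\|_{L^1([0,t])} + \|\Delta(K^2)\|_{L^1([0,t])}$ once one exploits $\|\Delta K\|_{L^2}^2 = \|\,|\Delta K|\cdot|\Sigma K|\cdot|\Sigma K|^{-1}\,\|\ldots$ type manipulations together with $\Delta(K^2) = \Delta K \cdot \Sigma K$ and the lower control on $\Sigma K \gtrsim K$. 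The outer $\int_0^T(\cdots)\,dt$ in the statement reflects that the comparison is propagated by a Gronwall-type iteration: the difference of the two integrated processes at time $t$ feeds back through $b,\sigma$ into the difference at later times.

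The main obstacle, I expect, is twofold. First, rigorously justifying the functional It\^o formula in the singular-kernel regime $H \in (0,\tfrac12)$: one must verify that $u$ has enough regularity (the right notion of path-derivative, continuity of the flow $\omega \mapsto \Flowx$ in $L^p$, differentiability under the expectation) for the formula of Viens--Zhang / Bonesini et al. to apply, and this is where the switch to Hadamard derivatives advertised in the introduction does its work — establishing Hadamard-differentiability of $\omega\mapsto \Flowx$ and chaining it with Lemma~\ref{lm:hadamard_differentiability} to differentiate $\omega \mapsto \EE[\phi(\Flowx)]$. Second, keeping all constants explicit and $K$-independent in the sense of the paper's $\lesssim$ convention: every Gaussian moment, every Malliavin-weight $L^2$ norm, and every Gronwall constant must be tracked so that the final bound depends on $K$ only through $\|K\|_{L^2([0,T])}$ (and the indicator $\indic{\rho>0}$ multiplying the correlation-induced $\p_x u$ cross term, which is absent when $\rho = 0$), yielding the stated form of $C_K$. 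Assembling the residual terms and checking that each is dominated by $\|\Delta K\|_{L^1([0,t])} + \|\Delta(K^2)\|_{L^1([0,t])}$ — in particular handling the first-order term, which naively only gives $\|\Delta K\|_{L^2}$ as in \eqref{eq:alfonsi_strong_error} but must be upgraded to $L^1$ using the sign/structure of the kernels — is the delicate bookkeeping that the bulk of Section~\ref{sec:proof_thm_1} will be devoted to.
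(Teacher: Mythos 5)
Your skeleton --- define $u(t,x,\omega)=\EE[\phi(X_T^{t,x,\omega})]$, establish the path-dependent Feynman--Kac formula and the functional It\^o formula, apply the latter to $t\mapsto u(t,\bitgr_t,\btheta^t)$, and substitute $\p_t u$ via the PPDE --- is exactly the paper's. But the residual you predict after that substitution is wrong, and you half-notice it yourself: since $\btheta^t_t=\bvol_t$, the terms $b(t,\bvol_t)\p_x u$ and $\tfrac12\sigma^2(t,\bvol_t)\pp_{xx}u$ produced by the It\^o formula cancel \emph{identically} against the corresponding terms of the PPDE evaluated along $(\bitgr_t,\btheta^t)$. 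There is no mean-value comparison of $b$, $\sigma^2$ at two different arguments, and no Gronwall iteration (the outer $\int_0^T(\cdots)\,dt$ is simply the time integral of the It\^o formula combined with the inner integral over $[t,T]$ after the change of variable $u=s-t$). The whole error sits in the path-derivative terms, whose \emph{directions} differ ($\ak(\cdot-t)$ in the It\^o formula versus $K(\cdot-t)$ in the PPDE), yielding
\begin{equation*}
\Ee_T=\rho\,\EE\int_0^T\sigma(t,\bvol_t)\,\f{(\p_x u)(t,\bitgr_t,\btheta^t)}{\Delta K^t}\,dt+\frac12\,\EE\int_0^T\ff{u(t,\bitgr_t,\btheta^t)}{\Delta K^t}{\Sigma K^t}\,dt,
\end{equation*}
where the second term comes from the bilinearity and symmetry of $\p^2_{\omega\omega}u$, which factor $\ff{u}{\ak^t}{\ak^t}-\ff{u}{K^t}{K^t}$ into $\ff{u}{\Delta K^t}{\Sigma K^t}$; this factorization, which is where $\Delta(K^2)=\Delta K\cdot\Sigma K$ actually enters, is missing from your plan. (Also, your displayed PPDE has a spurious $\tfrac12$ on the drift and a spurious first-order term $\f{u}{K(\cdot-t)}$; no such term appears, because $t\mapsto\Theta^t_s$ is a martingale.)

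The second, more serious gap is the mechanism for obtaining $L^1$-norms of $\Delta K$ rather than $L^2$-norms. Your proposed route (writing $\|\Delta K\|_{L^2}^2$ as $|\Delta K|\cdot|\Sigma K|\cdot|\Sigma K|^{-1}\cdots$ and invoking a lower bound $\Sigma K\gtrsim K$) is not legitimate in general and is not what makes the argument work. After inserting the probabilistic representations of $\f{\p_x u}{\cdot}$ and $\ff{u}{\cdot}{\cdot}$ (and their extension to singular directions, Lemma \ref{lm:representation_first_order_singular}, which you would also need), $\Delta K$ appears either inside Lebesgue time-integrals --- where H\"older plus the moment bounds of Lemma \ref{lm:controle_des_moments} give $\int_0^T\|\Delta K\|_{L^1([0,t])}dt$ directly --- or inside stochastic integrals such as $\int_t^T\p_x\sigma(s,V_s^{t,\btheta^t})\,\Delta K(s-t)\,dB_s$. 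For the latter, BDG/It\^o isometry only produces $\|\Delta K\|_{L^2}$, i.e.\ the strong-rate object of \eqref{eq:alfonsi_strong_error}; the paper instead applies the Malliavin integration by parts (duality) to convert these stochastic integrals into Lebesgue integrals against $\Delta K$, at the cost of the Malliavin-weight estimate \eqref{eq:estimate_mall_phi} (this is also where the factors $\indic{\rho>0}\|K\|_{L^2}$ and $\indic{\rho>0}\|K\|_{L^2}^2$ in $C_K$ originate, and where $\phi\in C^4$ is needed). Without this step your argument stalls at an $L^2([0,T])$ bound on $\Delta K$ and cannot reach the stated estimate.
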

\begin{remark}~
\begin{itemize}
\item[(i)] {\bf On  $\phi$ just in  $C^3(\RR,\RR)$. }   If we only have $\phi \in C^3(\RR,\RR)$, the result still holds replacing the constant $C_K$ by
  \begin{equation*}
    C_K \coloneq C (1 + \|K\|_{L^2([0,T])} + \indic{\rho > 0} \|K\|^2_{L^2([0,T])}) \ (1+\exp(C \ \|{K} \|_{L^2([0,T])}^2 \vee \|{\ak} \|_{L^2([0,T])}^2)).
  \end{equation*}
\item[(ii)]  {\bf On the rate of convergence. } 
The above error estimate contains two terms. The first one $\int_0^T \|{K - \ak}\|_{L^1([0,t])}dt$ usually behaves better than the second $\int_0^T \|{K^2 - \ak^2}\|_{L^1([0,t])}dt$.  If we think, for instance, of the  truncation approximation example \eqref{eq:kernel_appro_truncated}, $\|{K - \ak}\|_{L^1([0,t])}\leq \frac{1}{H+\frac{1}{2}} \tau^{H+\frac{1}{2}}$, whereas    $\|{K^2 - \ak^2}\|_{L^1([0,t])}\leq  \frac{1}{{2H}} \tau^{2H}$, that makes a huge difference between the two contributions  when $H$ is chosen small. Note that the $L^2$ contribution is already present when comparing the Volterra processes in Lemma \ref{lem:volterra_processes}, where the Gaussian case applies, simplifying the analysis.
The error decomposition \eqref{eq:error_decompos} in Section \ref{sec:proof_thm_1} shows that the contribution $\| (K^2) - (\ak^2) \|_{L^1([0,T])}$ may vanish  under the very  restrictive conditions that $b$ is a linear drift and $\sigma$ a constant diffusion coefficient in the definition of the integrated Volterra process \eqref{eq:integrated_model}.

\end{itemize}
\end{remark} 

\section{Preliminary tools, functional Itô formula and  Path-Dependent PDES}\label{sec:PPDES}

We introduce some preliminary notations in order to define path derivatives and the functional Itô formula, as well as to introduce the path-dependent PDEs. The content of this section is mostly taken from \cite{viens2019} and \cite{bonesini2023rough}, with some slight adaptations and an enhancement of the computation of the path derivatives.

We start with the introduction of the notion of path derivatives. We fix a real $t \in [0,T]$, and define
\begin{align*}
  & \Xx \coloneq C^0([0,T],\RR),\qquad \Xx_t \coloneq C^0([t,T],\RR), \qquad \overline{\Xx} \coloneq D^0([0,T],\RR); \\
  & \Lambda \coloneq [0,T] \times \RR \times C^0([0,T],\RR); \\
  & \bar{\Lambda} \coloneq \left\{(t,x,\omega) \in [0,T] \times \RR \times D^0([0,T],\RR) \ ; \ \omega_{|_{[t,T]}} \in \Xx_t \right\}; \\
  & \|\omega\|_{[0,T]} \coloneq \sup_{t \in [0,T]} |\omega_t|, \qquad d_{\bar{\Lambda}}( \ (t,x,\omega),(t',x',\omega') \ ) \coloneq |t-t'| + |x-x'| + \| \omega - \omega' \|_{[0,T]}.
\end{align*}
Let $C^0(\bar{\Lambda}) \coloneq C^0(\bar{\Lambda},\RR)$ be the space of functions $\func{u}{\bar{\Lambda}}{\RR}$ such that $u$ is continuous with respect to the topology induced by the distance $d_{\bar{\Lambda}}$. We define the time derivative of a function $u \in C^0(\bar{\Lambda})$ at $(t,x,\omega) \in \bar{\Lambda}$ as the following limit (if it exists):
\begin{equation*}
\p_t u(t,\omega) = \lim_{\delta \to 0} \frac{u(t+\delta,\omega) - u(t,\omega)}{\delta}. 
\end{equation*}
Let $(t,x) \in [0,T] \times \RR$ and $\eta \in \Xx_t$.
We define the path derivative of $u$ at $\omega \in \Xx$ in the direction $\eta$ by
\begin{equation*}
\av{\p_{\omega} u(t,x,\omega),\eta} = \lim_{\varepsilon \to 0} \frac{u(t,x,\omega \ind_{[t,T]} + \varepsilon \eta) - u(t,x,\omega \ind_{[t,T]})}{\varepsilon},
\end{equation*}
if the limit exists. For any $r < t$ and $\eta \in \Xx_r$,  we fix the convention that
\begin{equation*}
\av{\p_{\omega} u(t,x,\omega),\eta} = \av{\p_{\omega} u(t,x,\omega),\eta \ind_{[t,T]}}.
\end{equation*}
Similarly, we define the second path derivative of $u$ at point $(t,x,\omega) \in \bar{\Lambda}$, in the directions $\eta, \zeta \in \Xx_t$ by
\begin{equation*}
    \av{\p^2_{\omega} u(t,x,\omega ),(\eta,\zeta)} = \lim_{\varepsilon \to 0} \frac{ \f{u(t,x,\omega \ind_{[t,T]} + \varepsilon \zeta)}{\eta} - \f{u(t,x,\omega \ind_{[t,T]})}{\eta} }{\varepsilon},
\end{equation*}
if the limit exists. Note that the path derivatives are actually defined as G\^{a}teaux-derivatives with respect to $\omega \ind_{[t,T]}$.

\begin{definition} \label{def:regularity}
    We say that $u \in C^{2,2}(\bar{\Lambda}) \subset C^0(\bar{\Lambda})$ if the derivatives $\p_x u$, $\p_{xx} u$, $\p_{\omega}$, $\p_{\omega \omega} u$ and $\p_{\omega}( \p_x u )$ exist and are continuous functions (at fixed direction) on $\bar{\Lambda}$ with respect to the distance $d_{\bar{\Lambda}}$, and if the maps $\eta \mapsto \av{\p_{\omega} u(t,x,\omega),\eta}$, $\zeta \mapsto \av{\p^2_{\omega} u(t,x,\omega),(\eta,\zeta)}$ and $\eta \mapsto \av{\p_{\omega}( \p_x u)(t,x,\omega),\eta}$ are linear and continuous on $\Xx$.
\end{definition}

One may observe that due to the definition of $d_{\bar{\Lambda}}$, it is enough to show that the derivatives of $u$ are continuous respectively with respect to $t$, $x$ and $\omega$ separately to ensure that they are continuous under $d_{\bar{\Lambda}}$ on $\bar{\Lambda}$. A sufficient condition for the linearity and the continuity of the path derivatives with respect to the direction is that $u$, $\p_{\omega} u$ and $\p_x u$ are Fr\'{e}chet-differentiable. Note that Fr\'{e}chet and G\^{a}teaux-differentiability are not equivalent in this context since the space $\Xx$ has infinite dimension. However, after a careful check of the proofs of \cite{viens2019} and \cite{bonesini2023rough}, the Fr\'{e}chet-differentiability is not necessary as long as the conditions of Definition~\ref{def:regularity} are satisfied. For this reason, we have chosen to use the notion of Hadamard derivative in the proofs (see the precise definition in Appendix \ref{sec:appendix_hadam}), which allows the chain rule to be retained in calculations while being better suited to the framework of our study (see, for instance, the proof of Proposition \ref{prop:representation_first_order} below).
We will use the following growth conditions on the derivatives of $u$.

\begin{definition}
  Let $u\in C^{2,2}(\bar{\Lambda})$.
  \begin{enumerate}[label=(\roman*)]
    \item We say that $u$, $\p_x u$ or $\p^2_{xx} u$ has controlled growth if there exists a constant $q > 0$ such that
    \begin{equation*}
    \abs{\p^i_{x^i} u(t,x,\omega)}\lesssim (1 + |x|^q + e^{q \| \omega \|_{[0,T]}}), \ \text{ for all  } (t,x,{\omega})\in\bar{\Lambda}, \quad \text{for }i=0, \text{ resp.  } 1 \text{ or } 2. 
    \end{equation*}
    \item We say that $\partial_{{\omega}} u$ or $\partial_{{\omega}}( \p_x u )$  has controlled growth if there exists a constant $q > 0$ such that
    \begin{equation*}
    \abs{\langle \partial_{{\omega}} \p^{j}_{x^j} u(t,x,{\omega}),\eta\rangle}\lesssim (1 + |x|^q + e^{q \| \omega \|_{[0,T]}}) \norm{\eta \one{{[t,T]}}}_{[0,T]}, \ \text{ for all  } (t,x,{\omega})\in\bar{\Lambda}, \ \eta \in \Xx, \quad \text{for }j=0, \text{ resp. } 1. 
    \end{equation*}
    \item We say that $\partial_{{\omega}{\omega}} u$ has controlled growth if there exists a constant $q > 0$ such that
    \begin{equation*}
    \abs{\langle \partial^2_{{\omega}{\omega}} u(t,x,{\omega}),(\eta,\zeta)\rangle}\lesssim (1 + |x|^q + e^{q \| \omega \|_{[0,T]}}) \norm{\eta \one{{[t,T]}}}_{[0,T]}\norm{\zeta \one{{[t,T]}}}_{[0,T]}, \ \text{ for all  } (t,x,{\omega})\in\bar{\Lambda}, \ \eta, \zeta \in \Xx.
    \end{equation*}
  \end{enumerate}
  When $u$ and all the derivatives above have controlled growth, we write $u \in C^{2,2}_+(\bar{\Lambda})$.
\end{definition}

\subsection{Orthogonal decomposition and flow processes}
For $(t,s) \in [0,T]^2$ with $t \leq s$, we consider the dual-time martingales
\begin{equation} \label{eq:martingales_a_deu\itgr_Temps}
\begin{aligned}
\Theta^t_s = & \int_0^t K(s-r) dW_r \ \text{ and } \ \btheta^t_s = \int_0^t \ak(s-r) dW_r, 
\end{aligned}
\end{equation}
so that we have the orthogonal decompositions
\begin{align*}
\vol_s = \Theta^t_s + \int_t^s K(s-r) dW_r \ \text{ and } \ \bvol_s = \btheta^t_s + \int_t^s \ak(s-r) dW_r.
\end{align*}
For $(t,x,\omega) \in [0,T] \times \RR \times C^0([0,T],\RR)$, we define the stochastic flow
\begin{equation} \label{eq:flow}
\begin{aligned}
    \itgr_T^{t,x,\omega} = & x + \int_t^T b(s,\vol_s^{t,\omega}) ds + \int_t^T \sigma(s,\vol_s^{t,\omega}) dB_s \\
    \vol_s^{t,\omega} = & \omega_s + \int_t^s K(s-r) dW_r.
\end{aligned}
\end{equation}
In the following lemma, we obtain control, uniformly in time, of the moments of the stochastic flow \eqref{eq:flow} started at fixed $(x,\omega)$, which will allow us to prove the regularity of $(x,\omega) \mapsto \EE[\phi(X_T^{t,x,\omega})]$. We also obtain a uniform in time control of the moments of the stochastic flow started at the approximated process $(\bitgr_t,\btheta^t)$, which is needed to get upper bounds for the stochastic terms appearing after the use of the functional Itô formula in our weak error analysis. The proof utilises standard arguments, which are detailed in Appendix \ref{sec:proof_controle_des_moments} for clarity and completeness. 
\begin{lemma} \label{lm:controle_des_moments} 
Assume \ref{hyp:regularity} and \ref{hyp:kernel}. Let $p \geq 1$ and $(x,\omega) \in \RR \times C^0([0,T],\RR)$. There exists positive constants $m^{(1)}_{p}, m^{(2)}_{p}, \overline{m}^{(1)}_{p}$ and $\overline{m}^{(2)}_{p}$ that does not depend on $K$ and $T$, such that
\begin{equation*}
\begin{aligned}
 \sup_{t \in [0,T]} \EE[ | \itgr_T^{t,x,\omega} |^p] \leqs \ |x|^p + e^{m^{(1)}_{p} (\|K\|_{L^2([0,T])}^2 + \| \omega \|_{[0,T]} ) },
&  \qquad \sup_{t \in [0,T]} \sup_{s \in [t,T]} \EE[ \exp(p \vol_s^{t,\omega} )] \leqs e^{m^{(2)}_{p} (\|K\|_{L^2([0,T])}^2 + \| \omega \|_{[0,T]})}, \\
  \sup_{t \in [0,T]} \EE[ | \itgr_T^{t,\bitgr_t,\btheta^t} |^p] \leqs \ \EE[|X_0|^p] + e^{\overline{m}^{(1)}_{p} \|K\|_{L^2([0,T])}^2 },
& \qquad \sup_{t \in [0,T]} \sup_{s \in [t,T]} \EE[ \exp(p \vol_s^{t,\btheta^t} )] \leqs e^{\overline{m}^{(2)}_{p} \|K\|_{L^2([0,T])}^2 },
\end{aligned}
\end{equation*}
where the multiplicative constant hidden in the use of the symbol $\leqs$ may depend on $T$, $p$, $\nu_b$, $\nu_{\sigma}$, $\kappa_{b}$ and $\kappa_{\sigma}$ but not on $K$.
\end{lemma}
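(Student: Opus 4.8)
The plan is to reduce all four bounds to one-dimensional Gronwall-type estimates by splitting the Volterra integral at the cutting time $t$ and exploiting the Gaussian nature of each stochastic integral. First I would treat the exponential-moment bounds for $V^{t,\omega}_s$ and $V^{t,\btheta^t}_s$. Writing $V^{t,\omega}_s = \omega_s + \int_t^s K(s-r)\,dW_r$, the stochastic integral is centered Gaussian with variance $\int_t^s K(s-r)^2\,dr \le \|K\|_{L^2([0,T])}^2$, so $\EE[\exp(pV^{t,\omega}_s)] = \exp(p\omega_s)\exp\bigl(\tfrac{p^2}{2}\int_t^s K(s-r)^2\,dr\bigr) \le \exp(p\|\omega\|_{[0,T]})\exp(\tfrac{p^2}{2}\|K\|_{L^2([0,T])}^2)$, which gives the second estimate with $m^{(2)}_p = \tfrac{p^2}{2}\vee p$. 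For $V^{t,\btheta^t}_s = \int_0^t \ak(s-r)\,dW_r + \int_t^s K(s-r)\,dW_r$, this is again centered Gaussian with variance $\int_0^t \ak(s-r)^2\,dr + \int_t^s K(s-r)^2\,dr$; using Hypothesis \ref{hyp:kernel} ($\ak \le c_{K,\ak}K$) the first piece is $\le c_{K,\ak}^2\|K\|_{L^2([0,T])}^2$, so the total variance is $\le (1\vee c_{K,\ak}^2)\|K\|_{L^2([0,T])}^2$ and the fourth estimate follows with $\overline{m}^{(2)}_p$ proportional to $p^2(1\vee c_{K,\ak}^2)$.

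Next I would handle the $p$-th moment bounds on $X_T^{t,x,\omega}$. From \eqref{eq:flow}, $X_T^{t,x,\omega} = x + \int_t^T b(s,V^{t,\omega}_s)\,ds + \int_t^T \sigma(s,V^{t,\omega}_s)\,dB_s$. Apply the triangle inequality in $L^p(\Omega)$, then Minkowski's integral inequality on the drift term and the Burkholder--Davis--Gundy inequality on the stochastic term, reducing to $\|X_T^{t,x,\omega}\|_{L^p(\Omega)} \lesssim |x| + \int_t^T \|b(s,V^{t,\omega}_s)\|_{L^p(\Omega)}\,ds + \bigl(\int_t^T \|\sigma(s,V^{t,\omega}_s)\|_{L^p(\Omega)}^2\,ds\bigr)^{1/2}$. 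The growth hypothesis $|b(s,x)|,|\sigma(s,x)| \lesssim 1 + e^{\nu(x+s)}$ together with the already-established exponential-moment bound on $V^{t,\omega}_s$ controls $\|b(s,V^{t,\omega}_s)\|_{L^p(\Omega)}$ and $\|\sigma(s,V^{t,\omega}_s)\|_{L^p(\Omega)}$ by $1 + e^{\mathrm{const}(\|K\|_{L^2([0,T])}^2 + \|\omega\|_{[0,T]})}$, uniformly in $s$; integrating over $[t,T] \subseteq [0,T]$ and taking $p$-th powers yields the first estimate, with the constant depending on $T$ only through the length of the integration interval (hence not on $K$). The third estimate is identical in structure: replace $x$ by $X_0$ (using $X_0 \in L^p(\Omega)$) and $V^{t,\omega}_s$ by $V^{t,\btheta^t}_s$, and invoke the fourth estimate instead of the second; crucially, here the $\btheta^t$-dependence has already been absorbed into a deterministic bound, so no extra $\|\omega\|_{[0,T]}$ term survives.

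The only subtlety — and the step I expect to require the most care — is making the supremum over $t \in [0,T]$ (and, for the exponential bounds, the double supremum over $t$ and $s \in [t,T]$) uniform, with constants genuinely independent of $K$ and $T$ as claimed. This hinges on the two facts that, for every $t$, $\int_t^s K(s-r)^2\,dr = \int_0^{s-t} K(u)^2\,du \le \|K\|_{L^2([0,T])}^2$ and $\int_0^t \ak(s-r)^2\,dr \le c_{K,\ak}^2\|K\|_{L^2([0,T])}^2$ — both bounds being independent of the particular $t,s$ — so that the Gaussian variance is controlled by $\|K\|_{L^2([0,T])}^2$ no matter where we cut. Once these uniform variance bounds are in place, the BDG and Minkowski constants depend only on $p$ and $T$ (through interval lengths), and one reads off the four displayed inequalities. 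I would relegate the routine BDG/Minkowski bookkeeping and the explicit tracking of the multiplicative constants to Appendix \ref{sec:proof_controle_des_moments}, as the statement already announces.
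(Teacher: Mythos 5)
Your proposal is correct and follows essentially the same route as the paper's proof: the exponential moments come from the explicit Gaussian variance $\int_0^t \ak^2(s-r)\,dr + \int_t^s K^2(s-r)\,dr \le (1+c_{K,\ak}^2)\|K\|_{L^2([0,T])}^2$, and the $p$-th moments then follow from the triangle inequality, Jensen/Minkowski and BDG combined with the sub-exponential growth of $b$ and $\sigma$. One small imprecision: the third bound concerns the flow started at $\bitgr_t$ (not $X_0$), so you must additionally control $\sup_{t\in[0,T]}\EE[|\bitgr_t|^p]$ by running the same estimate on $[0,t]$ using $\bvol_s = \vol_s^{s,\btheta^s}$ — a routine extra step that the paper carries out explicitly and that your "identical in structure" remark implicitly covers.
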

In what follows, we are interested in the regularity of the function $u(t,x,\omega) = \EE[\phi(\itgr_T^{t,x,\omega})]$. In particular, we would like to show that the derivatives $\p_t u$ exists and that $u$ belongs to $C^{2,2}_+(\bar{\Lambda})$, to have access to the two main tools required for our error analysis, namely:
\begin{enumerate}
\item \textbf{The functional Itô formula}:
\begin{align*}
u(t,\bitgr_t,\btheta^t) = 
& \ u(0,X_0,0) + \int_0^t \p_t u(s,\bitgr_s,\btheta^s) ds + \int_0^t \p_x u(s,\bitgr_s,\btheta^s) b(s,\bvol_s) ds \\ 
& + \frac{1}{2} \int_0^t \p_{xx} u(s,\bitgr_s,\btheta^s) \sigma^2(s,\bvol_s) ds + \int_0^t {\rho \sigma(s,\bvol_s)} \ \av{ \p_{\omega}( \p_x u )(s,\bitgr_s,\btheta^s), \ak(\cdot - s)} ds \\ 
& + \frac{1}{2} \int_0^t \av{\p_{\omega \omega} u(s,\bitgr_s,\btheta^s) , (\ak(\cdot - s),\ak(\cdot - s))} ds \\
& + \int_0^t \p_x u(s,\bitgr_s,\btheta^s) \sigma(s,\bvol_s) dB_s + \int_0^t \av{\p_{\omega} u(s,\bitgr_s,\btheta^s),\ak(\cdot - s)} dW_s.
\end{align*}
  
\item \textbf{The path-dependent Feynman-Kac formula}: $u$ satisfies 
\begin{align*}
\p_t u(t,x,\omega) & + \p_x u(t,x,\omega) b(t,\omega_t) + \frac{1}{2} \p_{xx} u(t,x,\omega) \sigma^2(t,\omega_t) \\
& + \frac{1}{2} \ff{u(t,x,\omega)}{K(\cdot-t)}{K(\cdot - t)}  + \rho \ \sigma(t,\omega_t) \f{(\p_x u)(t,x,\omega)}{K(\cdot - t)} = 0,
\end{align*}
with terminal condition $u(T,x,\omega) = \phi(x)$, for any $(t,x,\omega) \in \bar{\Lambda}$.
\end{enumerate}
Note that in both of the identities above, the path derivatives of $u$ are taken in the directions $\eta, \zeta = \ak(\cdot - s)$ and $\eta, \zeta = K(\cdot - t)$, respectively. These directions do not belong to $\Xx_t$ since they are not continuous on $[t,T]$, due to the singularity of $K$ (and possibly $\ak$) at zero. 
For that reason, we start  proving that  $u \in C^{2,2}_+(\bar{\Lambda})$ in Proposition \ref{prop:representation_first_order}, stating the  regularity of $u$ in the case of continuous directions in Section \ref{sec:regularity_continuous_directions}.
Then we will extend the results to the case of singular directions in Section \ref{sec:regularity_singular_directions}, proving that $u \in C^{0,2,2}_{+, \alpha} (\Lambda)$ for some $\alpha \in (0,1)$ (see Definition~\ref{def:Cplusalpha} below), which induces that any extension $\tilde{u}$ of $u$ on $\bar{\Lambda}$ satisfies $\tilde{u} \in C^{2,2}_{+}(\bar{\Lambda})$ (see \citet[Proposition 3.7]{viens2019}).

\subsection{Regularity of $\omega \mapsto \EE[\phi(\itgr_T^{t,x,\omega})]$ -- the case of continuous directions for $\p_\omega$} \label{sec:regularity_continuous_directions}

When computing path derivatives in practice, it is often helpful to have access to a notion of chain rule. In the context of $L^p(\Omega)$ spaces, the chain rule is not straightforward since it does not hold with G\^{a}teaux derivatives, and the functions involved are not necessarily Fr\'{e}chet-differentiable at each stage. For this reason, we will rely on the concept of Hadamard derivatives, which is the minimum requirement for the chain rule to apply. We recall the definitions of Hadamard differentiability and the chain rule property in Appendix \ref{sec:appendix_hadam}.

The following lemma gives the first and second Hadamard derivatives in the space $L^p(\Omega)$ of the flow process $\itgr_T^{t,x,\omega}$.
\begin{lemma} \label{lm:gateaux_derivative_X}
Assume  \ref{hyp:regularity} and \ref{hyp:kernel}.
Fix $(t,x) \in [0,T] \times \Xx_t$  and $p\geq 1$. 
\begin{enumerate}[label=(\roman*)]
\item The function $\omega \mapsto X_T^{t,x,\omega}$ is Hadamard-differentiable in $L^p(\Omega)$, and the Hadamard derivative at $\omega \in \Xx_t$ in the direction $\eta \in \Xx_t$ is given by
\begin{equation*}
\av{\partial_{\omega} X_T^{t,x,\omega},\eta} = \int_t^T \p_{x} b (s,\vol_s^{t,\omega}) \eta_s ds + \int_t^T  \p_{x} \sigma (s,\vol_s^{t,\omega}) \eta_s dB_s.
\end{equation*}
        
\item The function $\omega \mapsto \f{X_T^{t,x,\omega}}{\eta}$ is Hadamard-differentiable in $L^p(\Omega)$, and the Hadamard derivative at $\omega \in \Xx_t$ in the direction $\zeta \in \Xx_t$ is given by
\begin{equation*}
\av{\partial^2_{\omega\omega} X_T^{t,x,\omega},(\eta,\zeta)} = \int_t^T \p_{xx}^2 b (s,\vol_s^{t,\omega}) \eta_s \zeta_s ds + \int_t^T  \p_{xx}^2  \sigma (s,\vol_s^{t,\omega}) \eta_s \zeta_s dB_s.
\end{equation*}
\end{enumerate}
\end{lemma}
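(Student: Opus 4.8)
The plan is to prove both statements by differentiating the flow equations \eqref{eq:flow} directly, treating $\omega \mapsto X_T^{t,x,\omega}$ as a composition of maps and invoking the Hadamard chain rule (Appendix \ref{sec:appendix_hadam}) together with the moment bounds of Lemma \ref{lm:controle_des_moments}. First I would fix $(t,x)$, $\omega, \eta \in \Xx_t$, and set $\omega^\varepsilon = \omega + \varepsilon\eta$, $V^\varepsilon_s = V_s^{t,\omega^\varepsilon} = \omega^\varepsilon_s + \int_t^s K(s-r)dW_r$, so that $V^\varepsilon_s - V^0_s = \varepsilon\eta_s$ deterministically. The candidate derivative process $Z_s := \langle \p_\omega V_s^{t,\omega},\eta\rangle$ is simply $\eta_s$, and one then expects the candidate for $\langle\p_\omega X_T^{t,x,\omega},\eta\rangle$ to solve the formal linearised equation obtained by differentiating \eqref{eq:flow} under the integral sign, namely $\int_t^T \p_x b(s,V_s^{t,\omega})\eta_s\, ds + \int_t^T \p_x\sigma(s,V_s^{t,\omega})\eta_s\, dB_s$. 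To make this rigorous as a Hadamard derivative in $L^p(\Omega)$, I would take an arbitrary sequence $\varepsilon_n \to 0$ and directions $\eta_n \to \eta$ in $\Xx_t$ (this is what Hadamard-differentiability requires, as opposed to merely Gâteaux), write the difference quotient
\begin{equation*}
\frac{X_T^{t,x,\omega+\varepsilon_n\eta_n} - X_T^{t,x,\omega}}{\varepsilon_n} - \left(\int_t^T \p_x b(s,V_s^{t,\omega})\eta_s\,ds + \int_t^T \p_x\sigma(s,V_s^{t,\omega})\eta_s\,dB_s\right),
\end{equation*}
and show it converges to $0$ in $L^p(\Omega)$. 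The difference quotient of the drift term equals $\int_t^T \big(\int_0^1 \p_x b(s, V_s^{t,\omega} + \theta\varepsilon_n\eta_{n,s})\,d\theta\big)\eta_{n,s}\,ds$ by the fundamental theorem of calculus, and similarly for the diffusion term with $dB_s$; subtracting the target and using the uniform continuity of $\p_x b,\p_x\sigma$ on compacts together with $\eta_n \to \eta$ uniformly, the integrand converges pointwise to $0$, while the exponential growth hypothesis in \ref{hyp:regularity} on $b,\p_x b,\sigma,\p_x\sigma$ combined with $\sup_{s}\EE[\exp(p V_s^{t,\omega})] < \infty$ from Lemma \ref{lm:controle_des_moments} gives an $L^p$-dominating bound (using Burkholder--Davis--Gundy for the stochastic integral), so dominated convergence closes the argument. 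Linearity of $\eta \mapsto \langle\p_\omega X_T^{t,x,\omega},\eta\rangle$ is immediate from the explicit formula, and continuity in $\eta$ is built into Hadamard-differentiability.

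For part (ii), the cleanest route is to view $\omega \mapsto \langle\p_\omega X_T^{t,x,\omega},\eta\rangle$ as a function of $\omega$ through the same flow $V_s^{t,\omega}$ and differentiate once more, exactly as in part (i): the map $\omega \mapsto \p_x b(s,V_s^{t,\omega})$ is Hadamard-differentiable in $L^p$ by the chain rule applied to the $C^2$ function $\p_x b(s,\cdot)$ and the (affine, hence Hadamard-differentiable) map $\omega \mapsto V_s^{t,\omega}$, with derivative $\p_{xx}^2 b(s,V_s^{t,\omega})\zeta_s$; multiplying by the fixed $\eta_s$ and integrating in $ds$ (resp. $dB_s$) preserves Hadamard-differentiability and yields the stated second derivative. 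The required $L^p$-domination again comes from \ref{hyp:regularity} (now using the bounds on $\p_{xx}^2 b, \p_{xx}^2\sigma$) and Lemma \ref{lm:controle_des_moments}, and BDG for the stochastic integral term. I would present (i) in full and remark that (ii) follows by an identical argument.

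The main obstacle is the $L^p$-domination step that licenses dominated convergence in the difference quotient: because $b$ and $\sigma$ only have exponential growth in the $V$-variable and the directions $\eta,\eta_n$ are uniformly bounded on $[t,T]$, one needs the full strength of the uniform exponential-moment estimate $\sup_{t}\sup_{s\in[t,T]}\EE[\exp(p V_s^{t,\omega})] \lesssim e^{m_p^{(2)}(\|K\|_{L^2}^2 + \|\omega\|_{[0,T]})}$ of Lemma \ref{lm:controle_des_moments}, applied with a large enough $p$ (here the hypothesis $X_0 \in L^p(\Omega)$ for all $p$ and the arbitrariness of $p$ in that lemma are essential), together with a Hölder split between the bounded factors coming from $\eta_n$ and the exponential factors coming from $b,\sigma$ and their derivatives. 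Care is also needed that the constants stay uniform as $\varepsilon_n\eta_n$ ranges over a bounded neighbourhood of $0$, which is why one fixes a bound $\sup_n \|\varepsilon_n\eta_n\|_{[0,T]} \le 1$ from the outset; everything else is a routine application of BDG and dominated convergence.
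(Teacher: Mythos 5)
Your proposal is correct and follows essentially the same route as the paper: exploit that $V_s^{t,\omega+\varepsilon\eta}-V_s^{t,\omega}=\varepsilon\eta_s$ is deterministic, Taylor-expand $b$ and $\sigma$ in the $V$-variable along arbitrary sequences $\varepsilon_n\to 0$, $\eta_n\to\eta$, and close with BDG/Minkowski plus the exponential moment bounds of Lemma \ref{lm:controle_des_moments}. The only (harmless) difference is technical: you use a first-order expansion and a dominated-convergence passage to the limit, whereas the paper pushes the Taylor expansion one order further so the residual carries an explicit factor $\varepsilon_n$ and only uniform-in-$n$ $L^p$ bounds are needed, with the direction perturbation $\eta_n$ versus $\eta$ handled separately via a pivot term.
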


\begin{proof}
We only prove the first point, since the second one uses the same arguments (along with the fact that $b$ and $\sigma$ are in $C^3(\RR,\RR)$ with polynomial growth derivatives). 
Let $\omega \in \Xx_t$ and $\eta \in \Xx_t$. 
We consider a sequence $(\eta^{(n)})_{n \in \NN}$ of elements of $\Xx_t$ such that $\| \eta^{(n)} - \eta \|_{[0,T]} \to 0$ as $n \to \infty$, and a sequence of real numbers $(\varepsilon_n)_{n \in \NN}$ such that $\varepsilon_n \to 0$ as $n \to \infty$. 
Denoting formally
\begin{equation*}
\f{X_T^{t,x,\omega}}{\tau} \coloneq \int_t^T \p_x b(s,\vol_s^{t,\omega}) \tau_s ds + \int_t^T \p_x \sigma(s,\vol_s^{t,\omega}) \tau_s dB_s,
\end{equation*}
for any $\tau \in \Xx_t$, we aim to show that
\begin{equation*}
\left| \frac{1}{\varepsilon_n} \left( X_T^{t,x,\omega + \varepsilon_n \eta^{(n)}} - X_T^{t,x,\omega} \right) - \f{X_T^{t,x,\omega}}{\eta} \right| \ \underset{n \to +\infty}{\xrightarrow{L^p(\Omega)}} \ 0.
\end{equation*}
We make use of a pivot term:
\begin{equation} \label{eq:decomposition_hadamard_derivative_X}
\begin{aligned}
\frac{1}{\varepsilon_n} \left( X_T^{t,x,\omega + \varepsilon_n \eta^{(n)}} - X_T^{t,x,\omega} \right) 
- \f{X_T^{t,x,\omega}}{\eta} 
= &\frac{1}{\varepsilon_n} \left( X_T^{t,x,\omega + \varepsilon_n \eta^{(n)}} - X_T^{t,x,\omega} \right) - \f{X_T^{t,x,\omega}}{\eta^{(n)}} \\ 
& + \f{X_T^{t,x,\omega}}{\eta^{(n)}} - \f{X_T^{t,x,\omega}}{\eta}.
\end{aligned}
\end{equation}
By definition, 
\begin{align*}
\f{X_T^{t,x,\omega}}{\eta^{(n)}} - \f{X_T^{t,x,\omega}}{\eta} = & \int_t^T \p_x b(s,\vol_s^{t,\omega}) (\eta_s^{(n)} - \eta_s) ds + \int_t^T \p_x \sigma(s,\vol_s^{t,\omega}) (\eta_s^{(n)} - \eta_s) dB_s. 
\end{align*}
Applying  Minkoswki's integral inequality and Burkhölder-Davis-Gundy (BDG) inequality with constant $c_p$, we get
\begin{equation}\label{eq:continuite_hadamard_derivative_X}
\begin{aligned}
\| \f{X_T^{t,x,\omega}}{\eta^{(n)}} - \f{X_T^{t,x,\omega}}{\eta} 
\|_{L^p(\Omega)} \leq & \int_t^T \| \p_x b(s,V_s^{t,\omega}) \|_{L^p(\Omega)} \ |\eta_s^{(n)} - \eta_s| ds \\ 
& + c_p^\frac{1}{p} \EE \left[ \left( \int_t^T  (\p_x\sigma)^2(s,V_s^{t,\omega}) 
\ (\eta_s^{(n)} - \eta_s )^2 ds \right)^{\frac{p}{2}} \right]^{\frac{1}{p}}.
\end{aligned}
\end{equation}
Using the regularity assumptions \ref{hyp:regularity} and the moment bound from Lemma \ref{lm:controle_des_moments}, we get that the first term in right-hand side of the inequality \eqref{eq:continuite_hadamard_derivative_X} is bounded by  $C\| \eta^{(n)} - \eta \|_{[0,T]}$, which goes to $0$ as $n \to +\infty$. For the second term, if $p \geq 2$ we may use Minkoswki's integral inequality again to get
\begin{align*}
\EE \left[ \left( \int_t^T  (\p_x\sigma)^2(s,V_s^{t,\omega}) \  (\eta_s^{(n)} - \eta_s )^2 ds \right)^{\frac{p}{2}} \right]^{\frac{1}{p}} 
\leq  \left( \int_t^T \| (\p_x\sigma)^2(s,V_s^{t,\omega})\|_{L^{p/2}(\Omega)}( \ \eta_s^{(n)} - \eta_s )^2 ds \right)^\frac{1}{2},
\end{align*}
while if $p \in [1,2)$, Jensen inequality applied to the concave function $x \mapsto x^{\frac{p}{2}}$ gives
\begin{align*}
\EE \left[ \left( \int_t^T  (\p_x\sigma)^2(s,V_s^{t,\omega}) \ (\eta_s^{(n)} - \eta_s )^2 ds \right)^{\frac{p}{2}} \right]^{\frac{1}{p}} 
\leq \left( \int_t^T \EE[(\p_x\sigma)^2(s,V_s^{t,\omega})] \ (\eta_s^{(n)} - \eta_s )^2 ds \right)^\frac{1}{2}.
\end{align*}
In both cases, we may use \ref{hyp:regularity} and Lemma \ref{lm:controle_des_moments} to get that the second term in right-hand side of \eqref{eq:continuite_hadamard_derivative_X} is bounded by $C \| \eta^{(n)} - \eta \|_{[0,T]}$, which goes to $0$ as $n \to +\infty$. We have thus shown that $\f{X_T^{t,x,\omega}}{\eta^{(n)}} \to \f{X_T^{t,x,\omega}}{\eta}$ in $L^p(\Omega)$ as $n \to +\infty$.

We now turn to the first term in the right-hand side of \eqref{eq:decomposition_hadamard_derivative_X}. We have
\begin{align*}
& \frac{1}{\varepsilon_n} \left( X_T^{t,x,\omega + \varepsilon_n \eta^{(n)}} - X_T^{t,x,\omega} \right) - \f{X_T^{t,x,\omega}}{\eta^{(n)}} \\
& = \frac{1}{\varepsilon_n}\int_t^T  \left( b(s,\vol_s^{t,\omega+\varepsilon_n \eta^{(n)}}) - b(s,\vol_s^{t,\omega}) - \p_{x} b (s,\vol_s^{t,\omega}) \varepsilon_n \eta^{(n)}_s \right) ds \\ 
& \quad + \frac{1}{\varepsilon_n}\int_t^T  \left( \sigma(s,\vol_s^{t,\omega+\varepsilon_n \eta^{(n)}}) - \sigma(s,\vol_s^{t,\omega}) - \p_{x} \sigma (s,\vol_s^{t,\omega}) \varepsilon_n \eta^{(n)}_s \right) dB_s.
\end{align*}
Using a Taylor expansion for $b$, we can write
\begin{align*}
b(s,\vol_s^{t,\omega+\varepsilon_n \eta^{(n)}}) - b(s,\vol_s^{t,\omega}) 
& =  (\vol_s^{t,\omega+\varepsilon_n \eta^{(n)}} - \vol_s^{t,\omega}) \int_0^1 \p_{x} b (s,\vol_s^{t,\omega} + \lambda (\vol_s^{t,\omega+\varepsilon_n \eta^{(n)}} - \vol_s^{t,\omega})) \ d\lambda, \\ 
& = \varepsilon_n \eta^{(n)}_s \int_0^1 \p_{x} b (s,\vol_s^{t,\omega} + \lambda \varepsilon_n \eta^{(n)}_s ) \ d\lambda
\end{align*}
and similarly for  $\p_{x} b $,
\begin{equation*}
\p_{x} b (s,\vol_s^{t,\omega} + \lambda \varepsilon_n \eta^{(n)}_s ) - \p_{x} b (s,\vol_s^{t,\omega}) 
=  \lambda \varepsilon_n \eta^{(n)}_s \int_0^1 \p_{xx}^2 b (s,\vol_s^{t,\omega} + \mu \lambda \varepsilon_n \eta^{(n)}_s ) \ d\mu.
\end{equation*}
The same argument holds for $\sigma$ and we deduce that
\begin{equation} \label{eq:gateaux_derivative_X}
\begin{aligned}
\frac{1}{\varepsilon_n} \left( X_T^{t,x,\omega + \varepsilon_n \eta^{(n)}} - X_T^{t,x,\omega} \right)
 - \f{X_T^{t,x,\omega}}{\eta^{(n)}}
& = \varepsilon_n \int_t^T (\eta^{(n)}_s)^2 \int_0^1 \int_0^1 \lambda \p_{xx}^2 b (s,\vol_s^{t,\omega} + \mu \lambda \varepsilon_n \eta^{(n)}_s ) d\mu \ d\lambda \ ds \\ 
& \quad + \varepsilon_n  \int_t^T (\eta^{(n)}_s)^2 \int_0^1 \int_0^1 \lambda \p_{xx}^2  \sigma (s,\vol_s^{t,\omega} + \mu \lambda \varepsilon_n \eta^{(n)}_s ) d\mu \ d\lambda \ dB_s.
\end{aligned}
\end{equation}

Let $p\geq 1$ and $\psi \in \{b,\sigma\}$. One has $(\eta^{(n)}_s)^2 \leq \sup_{n \in \NN} \| (\eta^{(n)})^2 \|_{[0,T]} < + \infty$, since $(\eta^{(n)})^2 \to (\eta)^2$, and using \ref{hyp:regularity}, we have
\begin{equation} \label{eq:control_gateaux_derivative}
\begin{aligned}
\EE[|\p^2_{xx}\psi (s,\vol_s^{t,\omega} 
+ \mu \lambda \varepsilon_n \eta_s^{(n)} )|^p] 
\leq & \EE[(1+\exp(\kappa_{\psi} (\vol_s^{t,\omega} 
+ \mu \lambda \varepsilon \eta_s^{(n)} + s) ) )^p] \\ 
& \leqs  1 + \EE[\exp(\kappa_{\psi} p (\vol_s^{t,\omega} 
+ \mu \lambda \varepsilon \eta_s^{(n)} + s))] \\ 
& \leqs   1 + \exp(\kappa_{\psi} p ( \sup_{n \in \NN} \{ \varepsilon_n \normt{\eta^{(n)}} \} + T)  ) \sup_{s\in[0,T]} \EE[\exp(\kappa_{\psi} p \vol_s^{t,\omega} )].
\end{aligned}
\end{equation}
The latter supremum is bounded thanks to Lemma \ref{lm:controle_des_moments} and since $\varepsilon_n \to 0$ and $\eta^{(n)} \to \eta$. We conclude using the $L^p(\Omega)$-Minkowski integral inequality on the time integral in \eqref{eq:gateaux_derivative_X} combined with \eqref{eq:control_gateaux_derivative}, as well as the Burkholder-Davis-Gundy inequality on the Brownian integral in \eqref{eq:gateaux_derivative_X} combined with \eqref{eq:control_gateaux_derivative}. This allows to obtain a uniform bound in $n$ for the $L^p(\Omega)$-norm of the two integrals in \eqref{eq:gateaux_derivative_X}, and the result follows by letting $n$ go to $+ \infty$.
\end{proof}
We may now check the regularity of the function $u(t,x,\omega) = \EE[\phi(X_T^{t,x,\omega})]$ and obtain a representation for the derivatives of $u$. This is realized in the following proposition.
\begin{proposition} \label{prop:representation_first_order}
  Let $t \in [0,T]$ and $\eta, \zeta \in \Xx_t$, and assume \ref{hyp:regularity}. The function $u:\bar{\Lambda} \to \RR$ defined by $u(t,x,\omega) = \EE[\phi(\flowx_T)]$ belongs to the space $C^{2,2}_+(\bar{\Lambda})$. Moreover, the following representations hold:
\begin{equation} \label{eq:probabilistic_representation}
  \begin{aligned}
    \p_x u(t,x,\omega) = & \EE[\phi'(\flowx_T)] \\
    \pp_x u(t,x,\omega) = & \EE[\phi''(\flowx_T)] \\
    \f{u(t,x,\omega)}{\eta} = & \EE[\phi'(\flowx_T) \f{X_T^{t,x,\omega}}{\eta} ] \\
    \f{ \p_x u(t,x,\omega)}{\eta} = & \EE[\phi''(\flowx_T) \f{X_T^{t,x,\omega}}{\eta} ] \\
    \ff{u(t,x,\omega)}{\eta}{\zeta} = & \EE[ \phi''(\flowx_T) \f{X_T^{t,x,\omega}}{\eta} \f{X_T^{t,x,\omega}}{\zeta}  ] + \EE[\phi'(\flowx_T) \ff{X_T^{t,x,\omega}}{\eta}{\zeta} ]. 
  \end{aligned}
\end{equation}
\end{proposition}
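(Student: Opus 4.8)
\noindent The plan is to obtain every derivative of $u$ by composing Hadamard-differentiable maps along the chain rule. Fix $t\in[0,T]$, $x\in\RR$, a path $\omega$, and directions $\eta,\zeta\in\Xx_t$, and choose $p\geq 1$ large enough (relative to $\kappa_\phi$) that all the compositions below make sense in the relevant $L^\cdot(\Omega)$-spaces. The dependence of $\flowx_T$ on $x$ being a mere additive shift, the map $x\mapsto\flowx_T$ is smooth from $\RR$ to $L^p(\Omega)$ with $\p_x\flowx_T\equiv 1$; composing with the Nemytskii map $\tilde\phi:L^p(\Omega)\to L^1(\Omega)$, $\tilde\phi(Y)=\phi(Y)$, which is Hadamard-differentiable by Lemma \ref{lm:hadamard_differentiability} with $\langle D\tilde\phi(Y),Z\rangle=\phi'(Y)\,Z$, and with the continuous linear functional $\EE:L^1(\Omega)\to\RR$, the chain rule gives $\p_x u(t,x,\omega)=\EE[\phi'(\flowx_T)]$; iterating with $\phi'$ (which, since $\phi\in C^4$, is $C^3$ with polynomially growing derivatives) gives $\pp_x u(t,x,\omega)=\EE[\phi''(\flowx_T)]$. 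The $L^1(\Omega)$-integrability of these integrands, and of all those appearing below, follows from Hölder's inequality combined with the polynomial moment bound on $\flowx_T$ and the exponential moment bounds on $\vol_\cdot^{t,\omega}$ provided by Lemma \ref{lm:controle_des_moments}.

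For the first path derivative, I would run the same chain rule with the inner map $\omega\mapsto\flowx_T$, which by Lemma \ref{lm:gateaux_derivative_X}(i) is Hadamard-differentiable in $L^p(\Omega)$ with derivative $\f{X_T^{t,x,\omega}}{\eta}$, linear in $\eta$. Since $u(t,x,\cdot)=\EE\circ\tilde\phi\circ(\omega\mapsto\flowx_T)$, this yields $\f{u(t,x,\omega)}{\eta}=\EE[\phi'(\flowx_T)\,\f{X_T^{t,x,\omega}}{\eta}]$, and re-running it with $\phi'$ in place of $\phi$ (using $\p_x u=\EE[\phi'(\flowx_T)]$) gives $\f{\p_x u(t,x,\omega)}{\eta}=\EE[\phi''(\flowx_T)\,\f{X_T^{t,x,\omega}}{\eta}]$. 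For the second path derivative I would differentiate $\omega\mapsto\f{u(t,x,\omega)}{\eta}=\EE[\phi'(\flowx_T)\,\f{X_T^{t,x,\omega}}{\eta}]$ along $\zeta$ via a Leibniz rule for Hadamard derivatives, obtained by composing with the continuous bilinear multiplication map between suitable $L^q(\Omega)$-spaces: the factor $\omega\mapsto\phi'(\flowx_T)$ is Hadamard-differentiable by the composition above, with derivative $\phi''(\flowx_T)\,\f{X_T^{t,x,\omega}}{\zeta}$, while the factor $\omega\mapsto\f{X_T^{t,x,\omega}}{\eta}$ is Hadamard-differentiable by Lemma \ref{lm:gateaux_derivative_X}(ii), with derivative $\ff{X_T^{t,x,\omega}}{\eta}{\zeta}$. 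Adding the two contributions and taking expectations produces exactly the claimed two-term formula for $\ff{u(t,x,\omega)}{\eta}{\zeta}$.

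It then remains to verify that $u\in C^{2,2}_+(\bar\Lambda)$. Linearity of $\eta\mapsto\f{u}{\eta}$, $\eta\mapsto\f{\p_x u}{\eta}$ and $\zeta\mapsto\ff{u}{\eta}{\zeta}$ is read off the representations, since by Lemma \ref{lm:gateaux_derivative_X} the first and second path derivatives of $\flowx_T$ are linear in their directions. Their continuity in the direction, and the controlled-growth estimates $|\f{u(t,x,\omega)}{\eta}|\leqs(1+|x|^q+e^{q\normt{\omega}})\,\normt{\eta\one{[t,T]}}$ together with the analogous bounds for $\f{\p_x u}{\eta}$ and $\ff{u}{\eta}{\zeta}$, follow by Hölder's inequality from the representations, once one bounds $\normP{\f{X_T^{t,x,\omega}}{\eta}}\leqs(1+e^{q\normt{\omega}})\,\normt{\eta\one{[t,T]}}$ and the corresponding second-order bound via \ref{hyp:regularity} and Lemma \ref{lm:controle_des_moments}, and $\EE[|\phi^{(k)}(\flowx_T)|^{p'}]\leqs 1+|x|^q+e^{q\normt{\omega}}$. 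Finally, continuity of each derivative on $\bar\Lambda$ with respect to $d_{\bar\Lambda}$ (at fixed direction) reduces, as noted after Definition \ref{def:regularity}, to separate continuity in $x$, in $\omega$, and in $t$: continuity in $x$ is immediate from the additive-shift structure and dominated convergence; continuity in $\omega$ with respect to $\normt{\cdot}$ follows from the $L^p(\Omega)$-continuity of $\omega\mapsto\flowx_T$ and of its first two path derivatives, together with the uniform integrability supplied by Lemma \ref{lm:controle_des_moments}; and continuity in $t$ is obtained the same way after estimating, in $L^p(\Omega)$, the increments of $\vol_s^{t,\omega}=\omega_s+\int_t^s K(s-r)\,dW_r$ as $t$ varies, which is where the kernel bound $K(r)\leqs r^{H-\frac{1}{2}}$ of \ref{hyp:kernel} enters to control the relevant stochastic integrals.

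The main obstacle I anticipate is concentrated in this last verification, and it is twofold. First, continuity of the path derivatives in the time variable $t$ under $d_{\bar\Lambda}$ is genuinely delicate, since moving $t$ simultaneously changes the window $[t,T]$ over which the flow is built and the decomposition $\vol_s=\Theta^t_s+\int_t^s K(s-r)\,dW_r$; controlling it requires the fractional bound on $K$ and care in passing to the limit inside the expectations. Second, one must track the integrability exponents along the chain- and product-rule compositions, choosing $p$ large enough (in terms of $\kappa_\phi$) that each intermediate factor lies in the $L^p(\Omega)$-space on which the next Hadamard derivative is available and that the final integrands belong to $L^1(\Omega)$. Everything else amounts to routine applications of Hölder's inequality, the BDG and Minkowski integral inequalities, and the moment bounds of Lemma \ref{lm:controle_des_moments}.
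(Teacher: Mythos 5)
Your proposal is correct and follows essentially the same route as the paper's proof: Hadamard differentiability of the Nemytskii map (Lemma \ref{lm:hadamard_differentiability}) composed via the chain rule with the flow derivatives of Lemma \ref{lm:gateaux_derivative_X}, a product rule for the second path derivative, and then the three-fold verification of linearity/continuity in the direction, controlled growth, and separate continuity in $t$, $x$, $\omega$ under $d_{\bar\Lambda}$, with the time continuity resting on the fractional bound of \ref{hyp:kernel}. The only cosmetic differences are that the paper obtains the $x$-derivatives by a direct Taylor argument rather than the chain rule, and packages the passage from $L^p$-continuity of the flow to continuity of the expectations in the explicit Lemma \ref{lm:extended_continuity} rather than a uniform-integrability argument.
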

\begin{remark}
  In particular, the map $(\eta,\zeta) \mapsto \ff{u(t,x,\omega)}{\eta}{\zeta}$ is a bilinear symmetric form on $\Xx_t^2$. 
\end{remark}
\begin{proof}
The proof is organized into three parts. First, we show that $u$ is twice differentiable with respect to its variable $x$, twice Hadamard-differentiable with respect to its pathwise variable $\omega$, and that $\p_x u$ is itself Hadamard-differentiable with respect to $\omega$. Second, we show that the derivatives of $u$ are continuous with respect to the distance $d_{\bar{\Lambda}}$ defined at the beginning of Section \ref{sec:PPDES}. We will observe that these first two points ensure that $u \in C^{2,2}(\bar{\Lambda})$ and provide the expected probabilistic representations of the derivatives of $u$. Finally, in the third step, we verify that all derivatives of $u$ grow at most polynomially in $x$ and exponentially in $\omega$, thereby concluding that $u \in C^{2,2}_{+}(\bar{\Lambda})$.

\paragraph{1. Hadamard-differentiability and associated probabilistic representations. }
The twice differentiable nature of the function $x \mapsto u(t,x,\omega)$ for fixed $(t,\omega) \in [0,T] \times \Xx_t$ is established through standard arguments. Indeed, by applying Taylor's formula to the locally Lipschitz functions $\phi$ and $\phi'$ respectively, using the facts that $\phi'$ and $\phi''$ have at most polynomial growth and applying Lemma \ref{lm:controle_des_moments}, it is straightforward to deduce that $\p_x u(t,x,\omega) = \EE[\phi'(\flowx_T)]$ and $\pp_x u(t,x,\omega) = \EE[\phi''(\flowx_T)]$. Now, fix $t \in [0,T]$ and $x \in \RR$. 
We demonstrate that $\omega \mapsto u(t,x,\omega)$ is Hadamard-differentiable: in addition to establishing the existence of the path derivative $\p_{\omega} u$ in the sense of Definition \ref{def:regularity}, we show that the mapping $\eta \mapsto \f{u(t,x,\omega)}{\eta}$ is continuous (see, for example, \citet[Proposition 3.1]{shapiro1990}). The linearity of the latter  then follows immediately by using the obtained probabilistic representation of $\p_{\omega} u$. Applying Lemma \ref{lm:hadamard_differentiability} with $f = \phi$, $p = \kappa_{\phi}$, $r=1$, and $q = 2 \kappa_{\phi} \vee 4$, the extension of $\phi$ to the spaces $L^q(\Omega)$ and $L^1(\Omega)$ is Hadamard-differentiable. Moreover, due to Lemma \ref{lm:gateaux_derivative_X}, the function $\omega \mapsto X_T^{t,x,\omega}$ is also Hadamard-differentiable in $L^q(\Omega)$.
This allows us to apply the chain rule from Proposition \ref{prop:chain_rule}, which shows that $\omega \mapsto \phi(X_T^{t,x,\omega})$ is Hadamard-differentiable in $L^1(\Omega)$, and that the Hadamard derivative at $\omega \in \Xx_t$ in the direction $\eta \in \Xx_t$ is given by
\begin{equation*}
\av{\partial_{\omega} \phi(X_T^{t,x,\omega}),\eta} 
= \phi'(X_T^{t,x,\omega}) \av{\partial_{\omega} X_T^{t,x,\omega},\eta}.
\end{equation*}
The Hadamard differentiability for the first order derivatives follows by a simple triangle inequality on the expectation. Indeed, for the derivative in $\omega$, we have for any sequences of positive numbers $(\varepsilon_n)_{n \in \NN}$ converging to zero and any sequence of paths $(\eta^{(n)})_{n \in \NN}$ converging to $\eta$ in $\Xx_t$: 
\begin{equation} \label{eq:triangle_inequality}
\begin{aligned}
\left| \ \EE \left[ \frac{\phi(X_T^{t,x,\omega+ \varepsilon_n \eta^{(n)}}) - \phi(X_T^{t,x,\omega}) }{\varepsilon_n} \right] - \EE[  \phi'(X_T^{t,x,\omega}) \av{\partial_{\omega} X_T^{t,x,\omega},\eta} ] \ \right| \\ 
\leq \EE \left[ \left| \frac{\phi(X_T^{t,x,\omega+ \varepsilon_n \eta^{(n)}}) - \phi(X_T^{t,x,\omega}) }{\varepsilon_n} - \phi'(X_T^{t,x,\omega}) \av{\partial_{\omega} X_T^{t,x,\omega},\eta} \right| \ \right],  
\end{aligned}
\end{equation}
and the right hand side of \eqref{eq:triangle_inequality} goes to $0$ as $n$ goes to $+\infty$ due to the Hadamard differentiability of $\omega \mapsto \phi(X_T^{t,x,\omega})$ for the $L^1(\Omega)$-norm. As announced previously, it is now clear that the map $\eta \mapsto \EE[\phi'(X_T^{t,x,\omega}) \av{\partial_{\omega} X_T^{t,x,\omega},\eta}]$ is linear, due to the linearity of $\eta \mapsto \av{\partial_{\omega} X_T^{t,x,\omega},\eta}$. 

To obtain the second derivative of $u$ with respect to $\omega$, we apply Lemma \ref{lm:hadamard_differentiability} with $f = \phi'$, $p = \kappa_{\phi}$, $r = 2$, and $q = 4 \kappa_{\phi} \vee 8$, combined with Lemma \ref{lm:gateaux_derivative_X}, showing that $\omega \mapsto \phi'(X_T^{t,x,\omega})$ is Hadamard-differentiable in $L^2(\Omega)$ and the chain rule applies from Proposition \ref{prop:chain_rule}. Moreover, Lemma \ref{lm:gateaux_derivative_X} also shows that the function $\omega \mapsto \av{\partial_{\omega} X_T^{t,x,\omega},\eta}$ is G\^{a}teaux-differentiable in $L^2(\Omega)$, and again the chain rule applies. This enables us to obtain the second order derivative of $u$ with respect to $\omega$. We can then apply the product rule from Lemma \ref{lm:product_rule} to show that $\omega \mapsto \phi'(X_T^{t,x,\omega}) \av{\partial_{\omega} X_T^{t,x,\omega},\eta}$ is G\^{a}teaux-differentiable in $L^1(\Omega)$, and the result follows.
The derivative $\p_x u$ is also shown to be Hadamard-differentiable in $\omega$ with similar arguments.

\paragraph{2. Continuity of the derivatives of $u$ under the distance $d_{\bar{\Lambda}}$.}

As explained previously, to show that a function is continuous under $d_{\bar{\Lambda}}$, it suffices to demonstrate its continuity with respect to each of the three variables $t$, $x$, and $\omega$. We will begin by showing that the mappings $(t,x,\omega) \mapsto X_T^{t,x,\omega}$, $(t,x,\omega) \mapsto \av{\p_{\omega} X_T^{t,x,\omega}, \eta}$, and $(t,x,\omega) \mapsto \av{\p^2_{\omega \omega} X_T^{t,x,\omega}, (\eta,\zeta)}$ are continuous in $L^p(\Omega)$ with respect to $t$ and $\omega$ for all $p \geq 1$ and $(\eta,\zeta) \in \Xx_t^2$. The continuity with respect to $x$ is straightforward since $X_T^{t,x,\omega} - X_T^{t,x',\omega} = x - x'$ and $\f{X_T^{t,x,\omega}}{\eta}$ and $\ff{X_T^{t,x,\omega}}{\eta}{\zeta}$ does not depend on $x$.

Let us focus first on time continuity: fix $(x,\omega) \in \RR \times \Xx_t$ and $(u,t) \in [0,T]^2$. Assume by symmetry that $u \leq t$. Then, by the definition of the stochastic flow,
\begin{equation} \label{eq:time_continuity_x}
\begin{aligned}
\| X_T^{u,x,\omega} - X_T^{t,x,\omega} \|_{L^p(\Omega)} \leq & \ \bigg\| \int_t^T ( b(r,V_r^{u,\omega}) - b(r,V_r^{t,\omega})) dr + \int_t^T ( \sigma(r,V_r^{u,\omega}) - \sigma(r,V_r^{t,\omega})) d B_r \bigg\|_{L^p(\Omega)}\\ & + \ \bigg\| \int_u^t b(r,V_r^{u,\omega}) d r + \int_u^t \sigma(r,V_r^{u,\omega}) d B_r \bigg\|_{L^p(\Omega)}.
\end{aligned}
\end{equation}
The second term on the right-hand side of \eqref{eq:time_continuity_x} is easily bounded by a constant multiplied by $t-u + \sqrt{t-u}$, using respectively Minkowski's integral inequality for the Lebesgue integral, BDG inequality followed by Minkoswki's integral inequality if $p \geq 2$ or by Jensen inequality if $p \in [1,2)$ for the stochastic integral, combined in both cases with the sub-exponential growth of $b$ and $\sigma$ and Lemma \ref{lm:controle_des_moments}. To handle the first term in the RHS of \eqref{eq:time_continuity_x}, we may use the same inequalities, and the locally Lipschitz nature of $b$ and $\sigma$ associated with a Cauchy-Schwarz inequality, the sub-exponential growth of $b$ and $\sigma$ and Lemma \ref{lm:controle_des_moments} reduces the control of the latter to that of $\| V_r^{u,\omega} - V_r^{t,\omega} \|_{L^q(\Omega)}$ for $q \in \{p, 2p\}$. Applying BDG inequality (with constant $c_q$) to the local martingale $(\int_u^z  K(r-\ell) d W_\ell, z \in [u,t])$, we have
\begin{equation*} 
\| V_r^{u,\omega} - V_r^{t,\omega} \|_{L^q(\Omega)} 
\leq c_{q}^{\frac{1}{q}} \bigg(\int_u^t K^2(r-\ell) d\ell \bigg)^{\frac{1}{2}}. 
\end{equation*}
Using Hypothesis \ref{hyp:kernel}, it follows that
\begin{equation*}
\| V_r^{u,\omega} - V_r^{t,\omega} \|_{L^q(\Omega)} \leqs c_{q}^{\frac{1}{q}} \bigg(\int_u^t (r-\ell)^{2H-1} d\ell \bigg)^{\frac{1}{2}} = c_{q}^{\frac{1}{q}} \sqrt{\frac{(r-u)^{2H} - (r-t)^{2H}}{2H}},
\end{equation*}
which tends to zero as $u \to t$, for fixed $r>u$, by the continuity of $z \mapsto z^{2H}$, and is uniformly bounded by $c_{q}^{\frac{1}{q}} \frac{T^H}{\sqrt{H}} \in L^1([0,T])$. Hence, $\| V_r^{u,\omega} - V_r^{t,\omega} \|_{L^q(\Omega)}$ tends to zero as $u \to t$ by the dominated convergence theorem for Lebesgue integration.

The continuity in $\omega$ is simpler to demonstrate. Fixing $t \in [0,T]$ and $x \in \RR$, Minkowski's integral inequality and BDG inequality followed by Minkoswki or Jensen inequality yield, for any $\omega, \tau \in \Xx_t$:
\begin{align*}
\| X_T^{t,x,\omega} - X_T^{t,x,\tau} \|_{L^p(\Omega)} \leq & \ \bigg\| \int_t^T ( b(r,V_r^{t,\omega}) - b(r,V_r^{t,\tau})) dr + \int_t^T ( \sigma(r,V_r^{t,\omega}) - \sigma(r,V_r^{t,\tau})) d B_r \bigg\|_{L^p(\Omega)} \\
\leq & \int_t^T \!\! \| b(r,V_r^{t,\omega}) - b(r,V_r^{t,\tau}) \|_{L^p(\Omega)} dr + c_p^{\frac{1}{p}} \left( \int_t^T \!\! \| (\sigma(r,V_r^{t,\omega}) - \sigma(r,V_r^{t,\tau})^2) \|_{L^{\frac{p}{2} \wedge 1}(\Omega)} \right)^{\frac{1}{2}}.
\end{align*}
Again, the locally Lipschitz nature of $b$ and $\sigma$ and the Cauchy-Schwarz inequality reduce the problem to the control of $\| V_r^{t,\omega} - V_r^{t,\tau} \|_{L^q(\Omega)}$ for $q \in \{p, 2p\}$. 
But since $V_r^{t,\omega} - V_r^{t,\tau} = \omega_r - \tau_r$, we have $ \| V_r^{t,\omega} - V_r^{t,\tau} \|_{L^q(\Omega)} \leq \| \omega - \tau \|_{[0,T]}$. It follows that $\| X_T^{t,x,\omega} - X_T^{t,x,\tau} \|_{L^p(\Omega)}$ tends to zero, as $\| \omega - \tau \|_{[0,T]}$ approaches zero. 
The proofs of time and $\omega$ continuity for the mappings $(t,x,\omega) \mapsto \av{\p_{\omega} X_T^{t,x,\omega}, \eta}$ and $(t,x,\omega) \mapsto \av{\p^2_{\omega \omega} X_T^{t,x,\omega}, (\eta,\zeta)}$ follow from the same arguments since these mappings share a structure very similar to $(t,x,\omega) \mapsto X_T^{t,x,\omega}$. 
The main difference is that one must bound the occurrences of $\eta_r$ and $\zeta_r$ by their supremum in time (which is finite by assumption) in the time integrals arising from the standard inequalities.

The continuity under $d_{\bar{\Lambda}}$ of all the derivatives of $u$ is thereby a consequence of the lemma below, whose proof is given in Appendix \ref{sec:proof_extended_continuity}:
\begin{lemma} \label{lm:extended_continuity} Let $f \in C^1(\RR,\RR)$ such that there exists $p \geq 1$ satisfying $|f(x)| + |f'(x)| \leqs 1 + |x|^p$ for all $x \in \RR$. For $q \geq 2p$, consider the mapping $F: L^q(\Omega) \to L^1(\Omega)$ defined by $F(X)(\omega) = f(X(\omega))$. 
Then $F$ is continuous from $L^q(\Omega)$ to $L^1(\Omega)$.
\end{lemma}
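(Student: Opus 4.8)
The plan is to establish sequential continuity: since $L^q(\Omega)$ and $L^1(\Omega)$ are metric spaces, it suffices to take an arbitrary sequence $X_n \to X$ in $L^q(\Omega)$ and show that $\EE[|f(X_n) - f(X)|] \to 0$. The starting point is the integral form of the mean value theorem, available because $f \in C^1(\RR,\RR)$:
\[
f(X_n) - f(X) = (X_n - X) \int_0^1 f'\big(X + \lambda(X_n - X)\big)\, d\lambda .
\]

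Next I would control the integrand with the growth bound $|f'(y)| \leqs 1 + |y|^p$. Since $|X + \lambda(X_n-X)| = |(1-\lambda)X + \lambda X_n| \le |X| + |X_n|$ for $\lambda \in [0,1]$, and $(a+b)^p \le 2^{p-1}(a^p+b^p)$ for $a,b \ge 0$ and $p \ge 1$, one obtains $\int_0^1 |f'(X+\lambda(X_n-X))|\, d\lambda \leqs 1 + |X|^p + |X_n|^p$, hence the pointwise estimate
\[
|f(X_n) - f(X)| \leqs |X_n - X|\,\big(1 + |X|^p + |X_n|^p\big).
\]
Taking expectations and applying the Cauchy--Schwarz inequality yields
\[
\EE[|f(X_n) - f(X)|] \leqs \|X_n - X\|_{L^2(\Omega)}\; \big\|\, 1 + |X|^p + |X_n|^p \,\big\|_{L^2(\Omega)} .
\]

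It then remains to observe that both factors behave well under the hypothesis $q \ge 2p$. As $\Omega$ is a probability space, $L^q(\Omega) \hookrightarrow L^r(\Omega)$ for every $r \le q$; in particular $q \ge 2p \ge 2$ gives $\|X_n - X\|_{L^2(\Omega)} \le \|X_n - X\|_{L^q(\Omega)} \to 0$, while $\| 1 + |X|^p + |X_n|^p \|_{L^2(\Omega)} \le 1 + \|X\|_{L^{2p}(\Omega)}^p + \|X_n\|_{L^{2p}(\Omega)}^p \le 1 + \|X\|_{L^q(\Omega)}^p + \|X_n\|_{L^q(\Omega)}^p$, which is bounded uniformly in $n$ because a convergent sequence in $L^q(\Omega)$ is bounded there. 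Letting $n \to \infty$ concludes.

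There is no real obstacle here: the argument is a routine combination of the mean value theorem, the Cauchy--Schwarz (Hölder) inequality, and the nesting of $L^p$ spaces on a finite measure space. The only point meriting care is the bookkeeping of integrability exponents --- pairing the difference $X_n - X$ (which must sit in $L^2$) with the polynomially growing remainder (which requires $L^{2p}$ control of $X$ and $X_n$) is precisely what forces the assumption $q \ge 2p$, and this matching should be spelled out explicitly.
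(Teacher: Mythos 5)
Your argument is correct and follows essentially the same route as the paper's proof: the integral mean value theorem, a Cauchy--Schwarz pairing of $X_n - X$ in $L^2$ against the polynomially growing factor in $L^2$, and the nesting $L^q \hookrightarrow L^{2p} \cap L^2$ on the probability space. The only cosmetic difference is that you bound the integrand pointwise before integrating, whereas the paper applies Minkowski's integral inequality first; the estimates are identical.
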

One may apply Lemma \ref{lm:extended_continuity} with $f = \phi', \phi''$, and since the composition of two continuous functions is continuous, it follows that $(t,x,\omega) \mapsto \phi^{(i)}(X_T^{t,x,\omega})$ is a continuous map from $(\bar{\Lambda},d_{\bar{\Lambda}})$ to $(L^1(\Omega),\| \cdot \|_{L^1(\Omega)})$ for $i = 1,2$, so the continuity of $\p_x u$, $\p_{xx} u$, $\p_{\omega} u$, $\p_{\omega} (\p_x u)$ and $\p_{\omega \omega}^2 u$ follows from their probabilistic representation \eqref{eq:probabilistic_representation} shown in the first part of the proof, the fact that a product of continuous functions is continuous and the triangle inequality.

\paragraph{3. Growth control of the derivatives.}
Observe that the growth control of $\p_x u$ and $\p_{xx} u$ follows directly from Lemma \ref{lm:controle_des_moments} since $\phi$ has at most polynomial growth. In addition, for any $\eta \in \Xx_t$,  we have by Cauchy-Schwarz inequality:
\begin{align*}
  |\f{u(t,x,\omega)}{\eta}| \leq \EE[ \phi'(X_T^{t,x,\omega})^2 ]^{\frac{1}{2}}  \  \EE[ \f{ X_T^{t,x,\omega}}{\eta}^2 ]^{\frac{1}{2}}.
\end{align*}
The first term in the right hand-side of the inequality above is bounded by some constant multiplied by $|x|^q + e^{q \| \omega \|_T}$ for some $q \geq 1$ thanks to \ref{hyp:regularity} and Lemma \ref{lm:controle_des_moments}, while the second term can be controlled using Jensen inequality and the Itô isometry:
\begin{align*}
  \EE[ \f{X_T^{t,x,\omega}}{\eta}^2 ] \leq & 2T \int_t^T \EE[\p_{x} b (r,V_r^{t,\omega})^2] \eta_r^2 dr + 2 \int_t^T \EE[\p_{x} \sigma (r,V_r^{t,\omega})^2] \eta_r^2 dr \\
  & \leqs \| \eta \ind_{[0,T]} \|_{[0,T]} (1 + e^{q' \| \omega \|_{[0,T]}}),
\end{align*}
for some $q' \geq 1$, using \ref{hyp:regularity} and Lemma \ref{lm:controle_des_moments}. 
The desired control of the first path derivative of $u$ follows afterwards, and we may get upper-bound for the second path derivative and the cross derivative using similar arguments.  
\end{proof}

\subsection{Regularity of $\omega\mapsto  \EE[\phi(X_T^{t,x,\omega})]$ -- the case of singular directions for $\p_\omega$} \label{sec:regularity_singular_directions}

In order to deal with directions that may be singular at the origin, we approximate the latter by a truncation.
Let $Q_{(1)},Q_{(2)} \in L^2([0,T],\RR_+)$ such that $Q$ is continuous on $(0,T]$.
For $t \in [0,T]$ and $Q \in \{ Q_{(1)}, Q_{(2)} \}$,  we set for all $u \in [t,T], \ Q^t(u) \coloneq Q(u-t)$ so that $Q^t \in L^2([t,T],\RR_+)$ and we define the truncated directions
\begin{equation*}
Q^{t,\delta}(s) = Q^t(s \vee (t+\delta)) 
= 
\left\{
\begin{array}{ll}
Q(s-t) & \mbox{if } s \geq t+\delta, \\
Q(\delta) & \mbox{if } s \in [0,t+\delta].
\end{array}
\right.
\end{equation*}
We may now define the path derivatives of a function $\func{u}{\bar{\Lambda}}{\RR}$ by the following limits, whenever they exist:
\begin{equation}\label{eq:spatial_derivative}
  \begin{aligned}
  \f{u(t,x,\omega)}{Q_{(1)}^t} 
  &=  \lim_{\delta \to 0} \f{u(t,x,\omega)}{Q_{(1)}^{t,\delta}}, \\
  \ff{u(t,\omega)}{Q_{(1)}^{t}}{Q_{(2)}^{t}} 
  &= \lim_{\delta \to 0} \ff{u(t,\omega)}{Q_{(1)}^{t,\delta}}{Q_{(2)}^{t,\delta}}.
  \end{aligned} 
\end{equation}
To ensure that these limits exist and that the functional It\^o formula and the path-dependent PDE hold, we need another set of regularity assumptions on $u$. The following conditions ensure that the path derivatives of $u$ are negligible when taken at directions supported near the time singularity.
\begin{definition}\label{def:Cplusalpha}
We say that $u\in C^{0,2,2}_{+,\alpha}(\Lambda)$, with $\alpha\in(0,1)$, if there exists a continuous extension of $u\in C^{2,2}_{+}(\overline\Lambda)$,
a constant $q > 0$ and a bounded modulus of continuity function~$\varrho$ such that, for any $x \in \mathbb{R}$, for any~$0\le t<T, \,0<\delta\le T-t$, and~$\eta,\zeta \in\Xx_t$ with supports contained in~$[t,t+\delta]$,
\begin{enumerate}
\item[(i)] for any~${\omega}\in\overline\Xx$ such that~${\omega}\one{[t,T]}\in\Xx_t$,
\begin{align*}
\abs{\langle \partial_{{\omega}} u(t,x,{\omega}),\eta\rangle} 
&\leq ( 1 + |x|^q + e^{ q \| \omega \|_{[0,T]}})  \norm{\eta}_{[0,T]} \delta^\alpha,\\
\abs{\langle \partial^2_{{\omega}{\omega}} u(t,x,{\omega}),(\eta,\zeta)\rangle} 
&\leq ( 1 + |x|^q + e^{ q \| \omega \|_{[0,T]}})  \norm{\eta}_{[0,T]} \norm{\zeta}_{[0,T]} \delta^{2\alpha}.
\end{align*}
\item[(ii)] for any other~${\omega}'\in\overline\Xx$ such that~${\omega}'\one{[t,T]}\in\Xx_t$,
\end{enumerate}
\small{
\begin{align*}
\abs{\langle \partial_{{\omega}} u(t,x,{\omega})-\partial_{{\omega}} u(t,x,{\omega}'),\eta\rangle} 
&\le ( 1 + |x|^q + e^{ q \| \omega \|_{[0,T]}} + e^{ q \| \omega' \|_{[0,T]}})  \norm{\eta}_{[0,T]} \varrho(\norm{{\omega}-{\omega}'}_{[0,T]}) \delta^\alpha,\\
\abs{\langle \partial^2_{{\omega}{\omega}} u(t,x,{\omega})-\partial_{{\omega}{\omega}} u(t,x,{\omega}'),(\eta,\zeta)\rangle} 
&\le ( 1 + |x|^q + e^{ q \| \omega \|_{[0,T]}} +  e^{ q \| \omega' \|_{[0,T]}})  \norm{\eta}_{[0,T]} \norm{\zeta}_{[0,T]} \varrho(\norm{{\omega}-{\omega}'}_{[0,T]})\ \delta^{2\alpha}.
\end{align*}
}
\begin{enumerate}
\item[(iii)] For any~${\omega}\in\overline\Xx$, $t\mapsto\langle \partial_{{\omega}} u(t,x,{\omega}),\eta\rangle$ and $t\mapsto\langle \partial^2_{{\omega}{\omega}} u(t,x,{\omega}),(\eta,\zeta)\rangle$ are continuous.
\end{enumerate}
\end{definition}

We will now give a representation of the derivatives \eqref{eq:spatial_derivative} taken at singular directions in our application case $u(t,x,\omega) = \EE[\phi(X_T^{t,x,\omega})]$. Note that, in this case, the direction $Q^t$ may not belong to $L^4([t,T])$. Consequently, we have to overcome the problem of the ill-defined stochastic integral $\int_t^T Q^2(s) dB_s$ appearing in the representation of the second order path derivative for continuous directions in Proposition \ref{prop:representation_first_order}. Nonetheless, an approach based on Malliavin calculus allows a representation for the second path derivative in this case, with the counterpart that is slightly more intricate compared to the one in Proposition \ref{prop:representation_first_order}. The following lemma gives the probabilistic representation for the derivatives of the function $u:(t,x,\omega) \mapsto \EE[\phi(\flowx_T)]$ taken at singular directions.

\begin{lemma} \label{lm:representation_first_order_singular}
  We assume that \ref{hyp:regularity} and \ref{hyp:kernel} hold. Let $(Q_{(1)},Q_{(2)}) \in L^2([0,T],\RR_+)\times L^2([0,T],\RR_+)$, such that $Q_{(i)}$ is continuous on $(0,T]$ and $\lim_{\delta \to 0} \delta Q_{(i)}^2(\delta) = \lim_{\delta \to 0} \int_0^\delta Q_{(i)}^2(r) dr = 0$ for $i=1,2$. Then for any $t\in[0,T]$ the conclusions of Proposition \ref{prop:representation_first_order} still hold when $(\eta, \zeta)\in \mathcal{X}_t^2$ is replaced by $(Q^t_{(1)},Q^t_{(2)}) \in L^2([t,T],\RR_+)^2$ with the following modification:
\begin{align*}
& \langle \partial^2_{\omega^2} u (t, x, \omega) , (Q^t_{(1)}, Q^t_{(2)}) \rangle \\
& =   \mathbb{E} [\phi'' (X_T^{t, x, \omega}) 
\langle \partial_{\omega} X_T^{t, x,\omega}, Q^t_{(1)} \rangle 
\langle \partial_{\omega} X_T^{t, x,  \omega}, Q^t_{(2)} \rangle ] \\ 
& \quad + \mathbb{E} \left[ \phi' (X_T^{t, x, \omega})
  \int_t^T \p_{xx}^2 b  (s,V_s^{t, \omega}) (Q_{(1)} Q_{(2)}) (t - s) d s \right]\\
& \quad+ \mathbb{E} \left[ \int_t^T \phi'' (X_T^{t, x, \omega}) (\sigma
  \p_{xx}^2  \sigma ) (s,V_s^{t, \omega}) (Q_{(1)} Q_{(2)}) (t - s) d s \right] \\
& \quad+ \rho \mathbb{E} \left[ \int_t^T \phi'' (X_T^{t, x, \omega}) \p_{xx}^2  \sigma (s,V_s^{t, \omega}) \left( \int_s^T \p_{x} b (\ell,V_{\ell}^{t, \omega}) K(\ell-s) d\ell \right) (Q_{(1)} Q_{(2)}) (t - s) d s \right] \\
&\quad + \rho \mathbb{E} \left[ \int_t^T \phi'' (X_T^{t, x, \omega}) \p_{xx}^2  \sigma (s,V_s^{t, \omega}) \left( \int_s^T \p_{x} \sigma (\ell,V_{\ell}^{t, \omega}) K(\ell-s) dB_{\ell} \right) (Q_{(1)} Q_{(2)}) (t - s) d s \right].
\end{align*}
\end{lemma}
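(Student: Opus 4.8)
The plan is to obtain the singular-direction representation as a limit of the continuous-direction one from Proposition~\ref{prop:representation_first_order}, using the truncated directions $Q^{t,\delta}$ and the hypotheses $\lim_{\delta\to 0}\delta Q_{(i)}^2(\delta)=\lim_{\delta\to 0}\int_0^\delta Q_{(i)}^2(r)dr=0$. First I would check that $Q^{t,\delta}_{(i)}\in\Xx_t$ for each fixed $\delta>0$ (it is continuous on $[t,T]$ since the singularity is flattened on $[t,t+\delta]$), so Proposition~\ref{prop:representation_first_order} applies and gives the probabilistic representations with $(\eta,\zeta)$ replaced by $(Q^{t,\delta}_{(1)},Q^{t,\delta}_{(2)})$. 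For the first-order derivatives $\partial_x u$, $\partial_{xx}u$, $\partial_\omega u$ and $\partial_\omega(\partial_x u)$, the direction enters only linearly through $\langle\partial_\omega X_T^{t,x,\omega},Q^{t,\delta}\rangle=\int_t^T\partial_x b(s,V_s^{t,\omega})Q^{t,\delta}(s)ds+\int_t^T\partial_x\sigma(s,V_s^{t,\omega})Q^{t,\delta}(s)dB_s$; since $Q^t\in L^2([t,T])\subset L^1([t,T])$ and $Q^{t,\delta}\to Q^t$ in $L^2([t,T])$ (dominated convergence, using $Q^{t,\delta}\le Q^t$ off $[t,t+\delta]$ and the integrability near the singularity), one passes to the limit in $L^p(\Omega)$ by BDG and Minkowski, then in the expectation by Cauchy--Schwarz together with the moment bounds of Lemma~\ref{lm:controle_des_moments}. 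This yields the unchanged first-order formulas with $Q^t$ in place of $\eta$.

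The substantive point is the second path derivative. Starting from
\[
\ff{u(t,x,\omega)}{Q^{t,\delta}_{(1)}}{Q^{t,\delta}_{(2)}}
=\EE\big[\phi''(X_T^{t,x,\omega})\f{X_T^{t,x,\omega}}{Q^{t,\delta}_{(1)}}\f{X_T^{t,x,\omega}}{Q^{t,\delta}_{(2)}}\big]
+\EE\big[\phi'(X_T^{t,x,\omega})\ff{X_T^{t,x,\omega}}{Q^{t,\delta}_{(1)}}{Q^{t,\delta}_{(2)}}\big],
\]
the first term converges to $\EE[\phi''(X_T^{t,x,\omega})\f{X_T^{t,x,\omega}}{Q^t_{(1)}}\f{X_T^{t,x,\omega}}{Q^t_{(2)}}]$ by the first-order convergence just established plus uniform integrability. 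The second term, by Lemma~\ref{lm:gateaux_derivative_X}(ii), equals
\[
\EE\Big[\phi'(X_T^{t,x,\omega})\Big(\int_t^T\p_{xx}^2 b(s,V_s^{t,\omega})Q^{t,\delta}_{(1)}(s)Q^{t,\delta}_{(2)}(s)ds+\int_t^T\p_{xx}^2\sigma(s,V_s^{t,\omega})Q^{t,\delta}_{(1)}(s)Q^{t,\delta}_{(2)}(s)dB_s\Big)\Big].
\]
The Lebesgue part passes to the limit directly since $Q^{t,\delta}_{(1)}Q^{t,\delta}_{(2)}\to Q_{(1)}Q_{(2)}$ in $L^1([t,T])$. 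The stochastic part $\int_t^T\p_{xx}^2\sigma(s,V_s^{t,\omega})Q^{t,\delta}_{(1)}(s)Q^{t,\delta}_{(2)}(s)dB_s$ has no $L^2$-limit because $Q_{(1)}Q_{(2)}$ need not be in $L^2$; here I would bring in Malliavin calculus. Using the duality/integration-by-parts formula $\EE[F\int_t^T G_s dB_s]=\EE[\int_t^T \mall{s}{F}\,G_s\,ds]+\rho$-type correction from the correlation structure, with $F=\phi'(X_T^{t,x,\omega})$ and $G_s=\p_{xx}^2\sigma(s,V_s^{t,\omega})Q^{t,\delta}_{(1)}(s)Q^{t,\delta}_{(2)}(s)$, one trades the ill-defined stochastic integral for a Lebesgue integral in $s$ against the Malliavin derivative. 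Computing $\mall{s}{\phi'(X_T^{t,x,\omega})}=\phi''(X_T^{t,x,\omega})\,\mall{s}{X_T^{t,x,\omega}}$ and, from the flow equation \eqref{eq:flow}, $\mall{s}{X_T^{t,x,\omega}}=\sigma(s,V_s^{t,\omega})+\int_s^T\p_x b(\ell,V_\ell^{t,\omega})K(\ell-s)d\ell+\int_s^T\p_x\sigma(\ell,V_\ell^{t,\omega})K(\ell-s)dB_\ell$ (the $\sigma(s,V_s^{t,\omega})$ term being the direct Malliavin derivative and the $K(\ell-s)$ terms coming from $\mall{s}{V_\ell^{t,\omega}}=K(\ell-s)$), one recovers exactly the four expectation terms in the claimed formula: $\sigma\p_{xx}^2\sigma$, then the two $\rho$-weighted terms with the $K$-convolutions, while the $\p_{xx}^2 b$ term came from the Lebesgue part above.

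The main obstacle is making the Malliavin step rigorous: one must verify that $\phi'(X_T^{t,x,\omega})\in\mathbb D^{1,2}(W,\widehat W)$ with the stated chain rule (polynomial growth of $\phi',\phi''$ plus Malliavin-differentiability of the flow $X_T^{t,x,\omega}$, which follows by a standard Picard/SDE argument under \ref{hyp:regularity}), and, crucially, that all the resulting terms are integrable \emph{uniformly in $\delta$}, so that dominated convergence applies as $\delta\to0$. The delicate integrability is in the last two terms, where $K(\ell-s)$ is singular at $\ell=s$ and is integrated against $Q_{(1)}Q_{(2)}$, itself possibly singular at $s=t$: here I would use \ref{hyp:kernel} ($K(r)\lesssim r^{H-\frac12}$, hence $K\in L^1$ and the inner $\int_s^T\p_x b(\ell,\cdot)K(\ell-s)d\ell$ is bounded in $L^p(\Omega)$ uniformly in $s$ via Minkowski and Lemma~\ref{lm:controle_des_moments}, and the stochastic inner integral via BDG since $K\in L^2$), combined with $Q_{(1)}Q_{(2)}\in L^1([t,T])$ and Cauchy--Schwarz against the polynomial moments of $\phi''(X_T^{t,x,\omega})$. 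Finally, the hypotheses $\delta Q_{(i)}^2(\delta)\to0$ and $\int_0^\delta Q_{(i)}^2\to0$ are precisely what guarantees $Q^{t,\delta}_{(i)}\to Q^t_{(i)}$ in $L^2([t,T])$ and $Q^{t,\delta}_{(1)}Q^{t,\delta}_{(2)}\to Q_{(1)}Q_{(2)}$ in $L^1([t,T])$, closing the limiting argument; continuity in $t$ of the resulting expressions (needed for item~(iii) of Definition~\ref{def:Cplusalpha}) follows from the same moment bounds and dominated convergence as in the proof of Proposition~\ref{prop:representation_first_order}.
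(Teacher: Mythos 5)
Your proposal is correct and follows essentially the same route as the paper: truncate the singular directions, apply the continuous-direction representation of Proposition \ref{prop:representation_first_order}, use the Malliavin integration by parts to replace the ill-defined stochastic integral against $Q_{(1)}Q_{(2)}$ by a Lebesgue integral against $\mall{s}{\phi'(X_T^{t,x,\omega})}$, compute that derivative explicitly from the flow, and pass to the limit $\delta\to 0$ using the stated hypotheses on $Q_{(i)}$. The only difference is that the paper delegates the truncation-limit and integration-by-parts steps to \citet[Proposition 2.23]{bonesini2023rough} (verifying its hypotheses) and then only computes the Malliavin derivative, whereas you reconstruct the whole argument; note also that the $\rho$, $\brho$ weights are already absorbed into the definition of $\mall{s}{\cdot}$, so the duality formula carries no extra correction term, and the $\rho$ factors you announce in the final formula should already appear in your displayed expression for $\mall{s}{X_T^{t,x,\omega}}$.
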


\begin{proof}
The core of the proof is a straightforward adaptation of the proof of \citet[Proposition 2.23]{bonesini2023rough}, with the slight modification that $(K,K)$ is replaced by $(Q_{(1)}, Q_{(2)})$ and that we do not necessary have $b = - \frac{1}{2} \sigma^2$. This leads to
\begin{align}\label{eq:representation_second_deriv_priori}
\langle \partial^2_{\omega} u (t, x, \omega), (Q^t_{(1)}, Q^t_{(2)}) \rangle = &
  \mathbb{E} [\phi'' (X_T^{t, x, \omega}) \langle \partial_{\omega} X_T^{t, x,
  \omega}, Q^t_{(1)} \rangle  \langle \partial_{\omega} X_T^{t, x,
  \omega}, Q^t_{(2)} \rangle]\nonumber \\\nonumber & + \mathbb{E} \left[ \phi' (X_T^{t, x, \omega})
  \int_t^T \p_{xx}^2 b  (s, V_s^{t, \omega}) (Q_{(1)} Q_{(2)}) (t - s) d s \right]\\ 
  & + \mathbb{E} \left[ \int_t^T \mall{s}{\phi'(\itgr_T^{t,x,\omega})} \ \p_{xx}^2  \sigma  (s, V_s^{t, \omega})  (Q_{(1)} Q_{(2)}) (s - t) d s
  \right], 
\end{align}
where for $F \in \mathbb{D}^{1,2}(W,\overline{W})$, we recall that $\mall{s}{F} \coloneq \rho \mallw_s F + \brho \mallbw_s F$ with $\mallw$ and $\mallbw$ the Malliavin derivatives with respect to $W$ and $\overline{W}$. This adaptation is legitimate since our hypothesis \ref{hyp:regularity} contains Assumption 2.19 from \cite{bonesini2023rough}, while for $i = 1,2$, we have
\begin{align*}
\bigg(\int_t^{t+\delta} Q_{(i)}(s-t)^2 ds\bigg)^\frac{p}{2} \xrightarrow[\delta \to 0]{} & \ 0, \quad p \geq 2, \\
\int_t^{t+\delta} \left( Q_{(i)}(\delta) - Q_{(i)}(s-t) \right)^2 ds \xrightarrow[\delta \to 0]{} & \ 0 \\
\text{ and}\quad \int_t^{t+\delta} \left( Q_{(i)}(\delta)^2 - Q_{(i)}(s-t)^2 \right) ds \xrightarrow[\delta \to 0]{},&
\end{align*}
using that $\lim_{\delta \to 0} \delta Q_{(i)}^2(\delta) = \lim_{\delta \to 0} \int_0^\delta Q_{(i)}^2(r) dr = 0$.

Coming back to the equality in \eqref{eq:representation_second_deriv_priori}, since $X_T^{t,x,\omega}$ is explicit, we can compute the Malliavin derivative with the standard product and chain rules:
\begin{align*}
\mall{s}{\phi'(\itgr_T^{t,x,\omega})} 
& = \rho \mallw_s \phi' (X_T^{t, x, \omega}) + \brho   \mallbw_s\phi' (X_T^{t, x, \omega})\\
& = \phi'' (X_T^{t, x, \omega}) \ \left(\rho \mallw_s X_T^{t, x, \omega} +
\brho   \mallbw_s X_T^{t, x, \omega} \right).
\end{align*}
Moreover, since $V_{\cdot}^{t,\omega}$ is independent from $\widehat{W}$,
\begin{align*}
 \brho  \mallbw_s X_T^{t, x, \omega} 
& =  \brho  \mallbw_s \left( x + \int_t^T b (\ell, V_{\ell}^{t, \omega}) d \ell +
\rho \int_t^T \sigma (\ell, V_{\ell}^{t, \omega}) d W_{\ell} + \brho  
\int_t^T \sigma (\ell, V_{\ell}^{t, \omega}) d \widehat{W}_{\ell} \right) = \brho^2  \sigma (s, V_s^{t, \omega})
\end{align*}
and
\begin{align*}
  \rho  \mallw_s X_T^{t, x, \omega} = & \rho  \mallw_s  \left( x + \int_t^T b (\ell, V_{\ell}^{t,
  \omega}) d \ell + \rho \int_t^T \sigma (\ell, V_{\ell}^{t, \omega}) d W_{\ell} +
  \brho   \int_t^T \sigma (\ell, V_{\ell}^{t, \omega}) d
  \widehat{W}_{\ell} \right).
\end{align*}
By linearity of the Malliavin derivative, and since $\mallw_s V_{\ell}^{t,
\omega} = 0$ if $\ell < s$,
\begin{align*}
\mallw_s \left( \int_t^T b (\ell, V_{\ell}^{t, \omega}) d \ell \right) =  \int_s^T
\mallw_s b (\ell, V_{\ell}^{t, \omega}) d \ell
= \int_s^T \p_{x} b  (\ell, V_{\ell}^{t, \omega}) (\mallw_s V_{\ell}^{t, \omega}) d \ell,
\end{align*}
while we have
\begin{align*}
  \mallw_s \left( \int_t^T \sigma (\ell, V_{\ell}^{t, \omega}) d \widehat{W}_{\ell}
  \right) = & \int_s^T \mallw_s \sigma (\ell, V_{\ell}^{t, \omega}) d
  \widehat{W}_{\ell}
\end{align*}
and 
\begin{align*}
  \mallw_s \left( \int_t^T \sigma (\ell, V_{\ell}^{t, \omega}) d W_{\ell} \right) = &
  \sigma (s, V_s^{t, \omega}) + \int_s^T \p_{x} \sigma  (\ell, V_{\ell}^{t, \omega}) (\mallw_s
  V_{\ell}^{t, \omega}) d W_{\ell}.
\end{align*}
Hence, using that $B_{\ell} = \rho W_{\ell} + \brho  \widehat{W}_{\ell}$, we deduce that
\begin{align*}
  \rho  \mallw_s  X_T^{t, x, \omega} = & \rho^2 \sigma (s, V_s^{t, \omega}) + \rho\int_s^T \p_{x} b 
  (\ell,V_{\ell}^{t, \omega}) (\mallw_s V_{\ell}^{t, \omega}) d \ell + \rho\int_s^T \p_{x} \sigma 
  (\ell,V_{\ell}^{t, \omega}) (\mallw_s V_{\ell}^{t, \omega}) d B_{\ell}
\end{align*}
Now, for $\ell \geq s$, we have $\mallw_s V_{\ell}^{t, \omega} = \mallw_s  \left( \int_t^{\ell} K
(\ell - u) d W_u \right) = K (\ell - s)$. Finally, since $\rho^2 + \brho^2 = 1$, we conclude that
\begin{equation} \label{eq:malliavin_derivative_phi}
    \mall{s}{\phi'(\itgr_T^{t,x,\omega})}
    = \phi'' (X_T^{t, x, \omega}) \left( \sigma (s, V_s^{t, \omega}) + \rho
    \int_s^T \p_{x} b  (\ell, V_{\ell}^{t, \omega}) K (\ell - s) d \ell + \rho \int_s^T
    \p_{x} \sigma  (\ell, V_{\ell}^{t, \omega}) K (\ell - s) d B_{\ell} \right)
\end{equation}
and the result follows. 
\end{proof}

Using the representation of the derivatives, we may now check that $u$ has the necessary additional regularity to handle singular directions. 
\begin{proposition} \label{prop:regularity_singular_directions}
  Assume that \ref{hyp:regularity} holds. Then $u \in C^{0,2,2}_{+,\frac{1}{2}}(\Lambda)$. 
\end{proposition}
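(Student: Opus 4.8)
The plan is to read off all path derivatives of $u(t,x,\omega)=\EE[\phi(X_T^{t,x,\omega})]$ from the probabilistic representations in Proposition~\ref{prop:representation_first_order} (continuous directions) and in Lemma~\ref{lm:representation_first_order_singular} (singular directions), and to verify the three conditions of Definition~\ref{def:Cplusalpha} with $\alpha=\tfrac12$ by elementary $L^p(\Omega)$ estimates. The continuous extension $u\in C^{2,2}_+(\bar{\Lambda})$ required in that definition is already provided by Proposition~\ref{prop:representation_first_order}, so only the localized decay estimates~(i), the $\omega$-modulus estimates~(ii) and the time continuity~(iii) remain.

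\textbf{Condition (i).} Fix $0\le t<T$, $0<\delta\le T-t$ and $\eta,\zeta\in\Xx_t$ supported in $[t,t+\delta]$. Then $\langle\partial_\omega X_T^{t,x,\omega},\eta\rangle=\int_t^{t+\delta}\partial_x b(s,V_s^{t,\omega})\eta_s\,ds+\int_t^{t+\delta}\partial_x\sigma(s,V_s^{t,\omega})\eta_s\,dB_s$, so Minkowski's integral inequality on the drift term, the Burkholder--Davis--Gundy (BDG) inequality on the martingale term, the sub-exponential growth of $\partial_x b,\partial_x\sigma$ from~\ref{hyp:regularity}, and Lemma~\ref{lm:controle_des_moments} yield $\EE[|\langle\partial_\omega X_T^{t,x,\omega},\eta\rangle|^{2p}]^{1/(2p)}\lesssim\|\eta\|_{[0,T]}(\delta+\delta^{1/2})(1+e^{q\|\omega\|_{[0,T]}})\lesssim\|\eta\|_{[0,T]}\,\delta^{1/2}(1+e^{q\|\omega\|_{[0,T]}})$ for $\delta\le T$. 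With $p=1$, a Cauchy--Schwarz inequality in $\langle\partial_\omega u(t,x,\omega),\eta\rangle=\EE[\phi'(X_T^{t,x,\omega})\langle\partial_\omega X_T^{t,x,\omega},\eta\rangle]$ together with $\EE[\phi'(X_T^{t,x,\omega})^2]^{1/2}\lesssim1+|x|^q+e^{q\|\omega\|_{[0,T]}}$ gives the first estimate of~(i) with exponent $\delta^{1/2}$. For $\partial^2_{\omega\omega}u$ I would use Lemma~\ref{lm:representation_first_order_singular} with $(Q_{(1)},Q_{(2)})=(\eta,\zeta)$: its leading term $\EE[\phi''(X_T^{t,x,\omega})\langle\partial_\omega X_T^{t,x,\omega},\eta\rangle\langle\partial_\omega X_T^{t,x,\omega},\zeta\rangle]$ is controlled by a threefold Hölder inequality and the case $p=2$ above, producing $\delta^{1/2}\cdot\delta^{1/2}=\delta=\delta^{2\alpha}$; the four remaining terms each carry an outer integral $\int_t^{t+\delta}(\cdots)\,\eta_s\zeta_s\,ds$ whose integrand is uniformly bounded in $s$ by Hölder, \ref{hyp:regularity} and Lemma~\ref{lm:controle_des_moments}, using for the two correlation terms the BDG bound $\|\int_s^T\partial_x\sigma(\ell,V_\ell^{t,\omega})K(\ell-s)\,dB_\ell\|_{L^p}\lesssim\|K\|_{L^2([0,T])}(1+e^{q\|\omega\|_{[0,T]}})$ (finite since $K\in L^2([0,T])$ by~\ref{hyp:kernel}); hence each is $O(\delta)$ and~(i) holds with $\delta^{2\alpha}$.

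\textbf{Conditions (ii) and (iii).} For a second path $\omega'$ with $\omega'\ind_{[t,T]}\in\Xx_t$, I would estimate the differences by pivoting, writing $\langle\partial_\omega u(t,x,\omega),\eta\rangle-\langle\partial_\omega u(t,x,\omega'),\eta\rangle$ as $\EE[(\phi'(X_T^{t,x,\omega})-\phi'(X_T^{t,x,\omega'}))\langle\partial_\omega X_T^{t,x,\omega},\eta\rangle]+\EE[\phi'(X_T^{t,x,\omega'})(\langle\partial_\omega X_T^{t,x,\omega},\eta\rangle-\langle\partial_\omega X_T^{t,x,\omega'},\eta\rangle)]$. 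Because $V_s^{t,\omega}-V_s^{t,\omega'}=\omega_s-\omega'_s$, the locally Lipschitz nature of $\phi'$, $b$, $\sigma$ and their first derivatives (with sub-exponential Lipschitz constants, by~\ref{hyp:regularity}), Cauchy--Schwarz and Lemma~\ref{lm:controle_des_moments} show that both $\|X_T^{t,x,\omega}-X_T^{t,x,\omega'}\|_{L^p}$ and $\delta^{-1/2}\|\eta\|_{[0,T]}^{-1}\|\langle\partial_\omega X_T^{t,x,\omega},\eta\rangle-\langle\partial_\omega X_T^{t,x,\omega'},\eta\rangle\|_{L^p}$ are $\lesssim\|\omega-\omega'\|_{[0,T]}(1+|x|^q+e^{q\|\omega\|_{[0,T]}}+e^{q\|\omega'\|_{[0,T]}})$; the support of $\eta$ being unchanged, the $\delta^{1/2}$ factor is inherited from~(i), which gives the first bound of~(ii) with a modulus $\varrho(r)=r\wedge1$ (the truncation is harmless since the exponential prefactors dominate once $\|\omega-\omega'\|_{[0,T]}\ge1$). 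The second path derivative is handled the same way, term by term in Lemma~\ref{lm:representation_first_order_singular}, the only extra ingredient being the $\omega$-Lipschitz stability of the nested integral $\int_s^T\partial_x\sigma(\ell,V_\ell^{t,\cdot})K(\ell-s)\,dB_\ell$, again via BDG, Cauchy--Schwarz, \ref{hyp:regularity} and Lemma~\ref{lm:controle_des_moments}. Finally, condition~(iii) follows from these representations and the $L^p$-continuity in $t$ of $X_T^{t,x,\omega}$, $\langle\partial_\omega X_T^{t,x,\omega},\eta\rangle$ and $\langle\partial^2_{\omega\omega}X_T^{t,x,\omega},(\eta,\zeta)\rangle$ already established inside the proof of Proposition~\ref{prop:representation_first_order}, combined with dominated convergence (the dominating functions for the $s$-integrals being integrable thanks to $K\lesssim t^{H-1/2}$).

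\textbf{Main obstacle.} I expect the main difficulty to be the last two (correlation) terms of the representation of $\langle\partial^2_{\omega\omega}u,(Q_{(1)},Q_{(2)})\rangle$ in Lemma~\ref{lm:representation_first_order_singular}: these contain a stochastic integral against the singular kernel $K$ nested inside an $s$-integral against $\phi''$ and $\partial^2_{xx}\sigma$, and for both~(i) and~(ii) this object must be bounded, and shown $\omega$-stable, uniformly in $s\in[t,T]$ using only $K\in L^2([0,T])$, while carefully tracking that all prefactors remain of controlled exponential growth so that the constants do not blow up. It is precisely here that the assumption $H\in(0,\tfrac12)$, which guarantees $K\in L^1([0,T])\cap L^2([0,T])$ and $K^2\in L^1([0,T])$, enters.
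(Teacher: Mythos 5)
Your proposal is correct and follows essentially the same route as the paper, which itself simply invokes Proposition~\ref{prop:representation_first_order}, Lemma~\ref{lm:representation_first_order_singular} and then delegates the estimates of Definition~\ref{def:Cplusalpha} to the proof of Proposition~2.23 of \cite{bonesini2023rough}; you carry out that adaptation explicitly. In particular you correctly identify the one non-obvious point: for the second path derivative one must use the Malliavin (integration-by-parts) representation of Lemma~\ref{lm:representation_first_order_singular}, so that all localized contributions appear as $ds$-integrals over $[t,t+\delta]$ of uniformly bounded integrands and yield $O(\delta)=O(\delta^{2\alpha})$, rather than the insufficient $O(\delta^{1/2})$ that a naive BDG bound on the stochastic integral in $\av{\partial^2_{\omega\omega}X_T^{t,x,\omega},(\eta,\zeta)}$ would give.
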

\proof{We have $u \in C^{2,2}_{+}(\bar{\Lambda})$ by Proposition \ref{prop:representation_first_order}. Therefore, it is sufficient to check that the path derivatives of $u$ in the sense of \eqref{eq:spatial_derivative} exist and satisfy the conditions of Definition \ref{def:Cplusalpha}. The existence of the derivatives of $u$ for singular directions follows by Lemma \ref{lm:representation_first_order_singular}.
For the estimates of Definition \ref{def:Cplusalpha}, we refer to the proof of \citet[Proposition 2.23]{bonesini2023rough} since our situation is a straightforward adaptation of the latter (with the difference that we do not necessary have $b = -\frac{1}{2} \sigma^2$, which has no impact in the obtained bounds).}

\subsection{Path dependant Feynman-Kac formula and Functional Itô formula} \label{sec:application_to_integrated_model}

We are now in a position to state the two main tools on which we base our analysis of  the weak error, namely the path-dependent Feynman-Kac formula and the functional Itô formula for the integrated model \eqref{eq:integrated_model}. 

\begin{proposition}[Path dependant Feynman-Kac formula] \label{prop:FK_formula}
Assume that \ref{hyp:regularity} and \ref{hyp:kernel} hold. Then the time derivative of $u:(t,x,\omega) \rightarrow\EE[\phi(X_T^{t,x,\omega})]$ exists (we thereby note $u \in C^{1,2,2}_{+,\frac{1}{2}}({\Lambda})$) and $u$ is the unique classical solution of the PPDE:
\begin{align*}
  \p_t u(t,x,\omega) & + \p_x u(t,x,\omega) b(t,\omega_t) + \frac{1}{2} \p_{xx} u(t,x,\omega) \sigma^2(t,\omega_t) \\
  & + \frac{1}{2} \ff{u(t,x,\omega)}{K(\cdot-t)}{K(\cdot - t)}  + \rho \ \sigma(t,\omega_t) \f{(\p_x u)(t,x,\omega)}{K(\cdot - t)} = 0,
\end{align*}
with terminal condition $u(T,x,\omega) = \phi(x)$, for any $(t,x,\omega) \in \bar{\Lambda}$.
\end{proposition}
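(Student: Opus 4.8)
The plan is to establish the path-dependent Feynman–Kac formula by combining the functional Itô formula with a martingale argument, following the strategy of \cite{viens2019} and \cite{bonesini2023rough}, but taking advantage of the regularity already established in Propositions~\ref{prop:representation_first_order} and~\ref{prop:regularity_singular_directions}. First I would recall that $u \in C^{2,2}_{+}(\bar\Lambda)$ by Proposition~\ref{prop:representation_first_order} and $u \in C^{0,2,2}_{+,\frac{1}{2}}(\Lambda)$ by Proposition~\ref{prop:regularity_singular_directions}. The key structural fact is the tower property: fixing $(t,x,\omega)$ and considering, along the flow \eqref{eq:flow}, the process $M_s \coloneq u(s, X_s^{t,x,\omega}, V_{[t,T]}^{t,\omega})$ for $s \in [t,T]$ — more precisely $M_s = \EE[\phi(X_T^{t,x,\omega}) \mid \Ff_s]$ rewritten via the orthogonal decomposition $V_r^{t,\omega} = \Theta^s_r + \int_s^r K(r-\ell)dW_\ell$ for $r \geq s$ — one sees that $(M_s)_{s\in[t,T]}$ is a martingale. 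Hence its finite-variation part must vanish.

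The next step is to apply the functional Itô formula (the first displayed identity in Section~\ref{sec:PPDES}, specialized to the flow started at $(t,x,\omega)$ with driving kernel $K$ rather than $\ak$) to $M_s = u(s, X_s^{t,x,\omega}, V^{t,\omega}_\cdot)$. This requires that $u$ admit a time derivative $\p_t u$; I would obtain its existence precisely by \emph{reading off} the drift from this Itô expansion, exactly as in \citet[proof of the PPDE]{bonesini2023rough}: one first derives the Itô formula with all terms except $\p_t u$ written explicitly, uses the martingale property to identify the drift of the remaining term, and thereby both shows $\p_t u$ exists and is continuous (so $u \in C^{1,2,2}_{+,1/2}(\Lambda)$) and that it satisfies the stated PDE pointwise along the trajectory. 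Evaluating at $s = t$, where $X_t^{t,x,\omega} = x$ and $V^{t,\omega}_{[t,T]}$ has prefix $\omega$, gives the PPDE at the arbitrary point $(t,x,\omega) \in \Lambda$; the terminal condition $u(T,x,\omega) = \EE[\phi(X_T^{T,x,\omega})] = \phi(x)$ is immediate from \eqref{eq:flow}. The extension from $\Lambda$ to $\bar\Lambda$ follows from the continuous extension property guaranteed by $u \in C^{0,2,2}_{+,1/2}(\Lambda)$ and \citet[Proposition 3.7]{viens2019}.

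For uniqueness, I would argue that any classical solution $v \in C^{1,2,2}_{+,1/2}(\Lambda)$ of the PPDE with the same terminal data must coincide with $u$: apply the functional Itô formula to $v(s, X_s^{t,x,\omega}, V^{t,\omega}_\cdot)$ along the flow, observe that the PDE makes the finite-variation part vanish, so $s \mapsto v(s, X_s^{t,x,\omega}, V^{t,\omega}_\cdot)$ is a local martingale; the controlled-growth bounds (membership in $C^{2,2}_{+}$) together with the moment estimates of Lemma~\ref{lm:controle_des_moments} upgrade this to a true martingale, and taking expectations between $t$ and $T$ yields $v(t,x,\omega) = \EE[\phi(X_T^{t,x,\omega})] = u(t,x,\omega)$.

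The main obstacle is the appearance of the \emph{singular} directions $K(\cdot - t)$ in the path derivatives $\ff{u}{K(\cdot-t)}{K(\cdot-t)}$ and $\f{(\p_x u)}{K(\cdot-t)}$ entering the Itô formula and the PDE. These directions are not in $\Xx_t$ when $K$ has a singularity at the origin, so one cannot simply invoke Proposition~\ref{prop:representation_first_order}; instead one must pass through the truncated directions $K^{t,\delta}$ of Section~\ref{sec:regularity_singular_directions}, verify via Lemma~\ref{lm:representation_first_order_singular} (which applies since the fractional kernel satisfies $\lim_{\delta\to 0}\delta K^2(\delta) = \lim_{\delta \to 0}\int_0^\delta K^2 = 0$ under Hypothesis~\ref{hyp:kernel}) that the $\delta\to 0$ limits exist and give the claimed representations, and then justify passing the limit through the Itô formula — this is where the $C^{0,2,2}_{+,1/2}$ estimates of Definition~\ref{def:Cplusalpha}, controlling the near-singularity contributions by $\delta^{\alpha}$ with $\alpha = \tfrac12$, do the essential work. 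All of this machinery is in place by the preceding results, so the proof is largely a careful assembly rather than new estimates.
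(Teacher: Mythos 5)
Your plan is mathematically sound and reaches the right conclusion, but it takes a different route from the paper. The paper's proof is essentially a reduction-plus-citation: it bundles $(X,V)$ into a two-dimensional Volterra-type process $Z$ with matrix coefficients (so that the Euclidean variable $x$ and the path variable $\omega$ become two coordinates of a single path variable), checks that the corresponding value function $v$ lies in the class $C^{0,2}_{+,\frac12}$ of \cite{bonesini2023rough} thanks to Proposition~\ref{prop:regularity_singular_directions}, and then invokes \citet[Proposition 2.14]{bonesini2023rough} wholesale, which delivers existence of $\p_t u$, the PPDE, and uniqueness in one stroke. You instead propose to inline the underlying derivation — martingale property of $s\mapsto\EE[\phi(X_T^{t,x,\omega})\mid\Ff_s]$, functional Itô expansion along the flow, identification of $\p_t u$ from the drift, truncation $K^{t,\delta}\to K^t$ controlled by the $\delta^{1/2}$ estimates, and a verification argument for uniqueness. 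This is precisely how the cited proposition is proved, so your argument buys self-containedness at the cost of redoing work the paper outsources; the paper's $2$D reformulation is the step you omit, and it is what lets the external result (stated for purely path-dependent functionals) apply to a function with a separate Euclidean argument.

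One caveat you should make explicit: you cannot invoke the paper's own functional Itô formula (Proposition~\ref{prop:functional_ito}) in this argument, for two reasons. First, it is stated for the semimartingale $(\bitgr_t,\btheta^t)$ driven by the regularized kernel $\ak$, not for the flow driven by the singular $K$. Second, and more importantly, its proof in the paper uses Proposition~\ref{prop:FK_formula} to establish that $u\in C^{1,2,2}_{+,\frac12}$ before applying \citet[Theorem 3.17]{viens2019}, so using it here would be circular. Your "read off the drift" device does resolve this — it is exactly the mechanism of \cite{bonesini2023rough} for producing $\p_t u$ without assuming it — but the proof must be phrased in terms of the external Itô formula for the $K$-driven dual-time martingale $(\Theta^s_\cdot)_s$ (or the drift-identification lemma of the cited reference), not in terms of the paper's Proposition~\ref{prop:functional_ito}. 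With that adjustment the assembly you describe goes through.
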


\begin{proof}
Let $\bo{W} = (W,\widehat{W})$. We rewrite $(X,\overline{X})$ as a the two-dimensional processes $(Z,\overline{Z})$, and their associated dual-time semimartingales $\Pi$ and $\overline{\Pi}$ by
\begin{align*}
 Z_t = & z_0 + \int_0^t \gamma(t,r,Z_r) dr + \int_0^t \mu(t,r,Z_r) d\bo{W}_r, \quad \overline{Z}_t = z_0 + \int_0^t \gamma(t,r,\overline{Z}_r) dr + \int_0^t \nu(t,r,\overline{Z}_r) d\bo{W}_r, \\
  \Pi^t_s = & z_0 + \int_0^t \gamma(t,s,Z_r) dr + \int_0^t \mu(t,s,Z_r) d\bo{W}_r, \quad \overline{\Pi}^t_s = z_0 + \int_0^t \gamma(t,s,\overline{Z}_r) dr + \int_0^t \nu(t,s,\overline{Z}_r) d\bo{W}_r,
\end{align*}
where $z_0 = (X_0,0)$, $\gamma(t,r,(z_1,z_2)) = (b(t,z_2),0)$ and
\begin{align*}
  \mu(t,r,(z_1,z_2)) = \begin{pmatrix} &\rho \sigma(t,z_2)  &\brho \sigma(t,z_2)  \\ &K(t-r) &0 \end{pmatrix}, \quad \nu(t,r,(z_1,z_2)) = \begin{pmatrix} &\rho \sigma(t,z_2)  &\brho \sigma(t,z_2) \\ & \ak(t-r) & 0 \end{pmatrix}.
\end{align*}
By construction, we have $Z_t = (X_t,V_t)$, $\overline{Z}_t = (\overline{X}_t,\overline{V}_t)$, $\Pi^t_s = (X_t,\Theta^t_s), \overline{\Pi}^t_s = (\overline{X}_t,\overline{\Theta}^t_s)$. 
In particular, the function $v$ on $[0,T] \times C^0([0,T],\RR)^2$,  defined by $v(t,(\omega_1,\omega_2)) = \EE[\phi(({Z_{T}})^{t,(\omega_1,\omega_2)})] = u(t,\omega_1,\omega_2)$,  belongs to the space $C^{0,2}_{+,\frac{1}{2}}(\Lambda)$ in the sense of \cite{bonesini2023rough} since we showed in Proposition \ref{prop:regularity_singular_directions} that $u \in C^{0,2,2}_{+,\frac{1}{2}}(\Lambda)$.  
Then the result follows from the application of  \citet[Proposition 2.14]{bonesini2023rough} to the function $v$. 
The assumptions required for this are satisfied  due to the moment control from Lemma \ref{lm:controle_des_moments} and \ref{hyp:kernel}.
\end{proof}

\begin{proposition}[Functional Itô formula] \label{prop:functional_ito}
Assume that \ref{hyp:regularity} and \ref{hyp:kernel} hold. The following functional Itô formula holds for the function $u:(t,x,\omega) \rightarrow \EE[\phi(X_T^{t,x,\omega})]$ and the semimartingale $(\bitgr_t,\btheta^t)$:
\begin{align}\label{eq:Ito}
\begin{aligned}
u(t,\bitgr_t,\btheta^t) & =  u(0,X_0,0) + \int_0^t \p_t u(s,\bitgr_s,\btheta^s) ds + \int_0^t \p_x u(s,\bitgr_s,\btheta^s) b(s,\bvol_s) ds \\ 
&\quad + \frac{1}{2} \int_0^t \p_{xx} u(s,\bitgr_s,\btheta^s) \sigma^2(s,\bvol_s) ds + \int_0^t {\rho \sigma(s,\bvol_s)}\av{ \p_{\omega}( \p_x u )(s,\bitgr_s,\btheta^s), \ak(\cdot - s)} ds \\ 
&\quad + \frac{1}{2} \int_0^t \av{\p_{\omega \omega} u(s,\bitgr_s,\btheta^s), (\ak(\cdot - s),\ak(\cdot - s))} ds \\
&\quad  + \int_0^t \p_x u(t,\bitgr_t,\btheta^t) \sigma(s,\bvol_s) dB_s + \int_0^t \av{\p_{\omega} u(s,\bitgr_s,\btheta^s),\ak(\cdot - s)} dW_s.
  \end{aligned}
  \end{align}
\end{proposition}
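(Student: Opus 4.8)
The plan is to deduce \eqref{eq:Ito} from the functional Itô formula established in \cite{viens2019} and \cite{bonesini2023rough}, applied to the two-dimensional reformulation already introduced in the proof of Proposition \ref{prop:FK_formula}. Recall from there that $\ov{Z}_t = (\bitgr_t, \bvol_t)$ is a two-dimensional Volterra-type process driven by $\bo{W} = (W,\widehat{W})$, with associated dual-time semimartingale $\ov{\Pi}^t_s = (\bitgr_t,\btheta^t_s)$, and that the function $v(t,(\omega_1,\omega_2)) \coloneq u(t,\omega_1,\omega_2)$ belongs to the regularity class required by \cite{bonesini2023rough} — this is exactly what Proposition \ref{prop:regularity_singular_directions} provides ($u \in C^{0,2,2}_{+,\frac{1}{2}}(\Lambda)$), together with the existence of $\p_t u$ from Proposition \ref{prop:FK_formula} ($u \in C^{1,2,2}_{+,\frac{1}{2}}(\Lambda)$). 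The only real work is then to translate the abstract statement back into the notation of \eqref{eq:Ito} and to check that the hypotheses of the cited formula hold under \ref{hyp:regularity} and \ref{hyp:kernel}.

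First I would record the dynamics of the semimartingale $(\bitgr_t,\btheta^t)$: the component $\bitgr$ has finite-variation part $\int_0^\cdot b(s,\bvol_s)\,ds$ and martingale part $\int_0^\cdot \sigma(s,\bvol_s)\,dB_s$, with $dB_s = \rho\,dW_s + \brho\,d\widehat{W}_s$, while the frozen-time martingale $\btheta^t_s = \int_0^t \ak(s-r)\,dW_r$ has, along the diagonal $s=t$, infinitesimal increment $\ak(\cdot-t)\,dW_t$ — this is the direction in which the path derivatives of $u$ must be evaluated. Plugging these data into the functional Itô formula of \cite{bonesini2023rough} for $v$ and $\ov{\Pi}^t$, and using $v(t,(\omega_1,\omega_2)) = u(t,\omega_1,\omega_2)$, produces exactly the terms of \eqref{eq:Ito}: the time derivative $\p_t u$; the first $x$-term $\p_x u\, b(s,\bvol_s)$ with the Itô correction $\frac12 \p_{xx} u\, \sigma^2(s,\bvol_s)$; the second-order path term $\frac12\,\ff{u}{\ak(\cdot-s)}{\ak(\cdot-s)}$; the cross term $\rho\,\sigma(s,\bvol_s)\,\f{(\p_x u)}{\ak(\cdot-s)}$, which arises from the correlation $d\langle B,W\rangle_s = \rho\, ds$ between the driver of $\bitgr$ and that of $\btheta$; and the two stochastic integrals $\int_0^\cdot \p_x u\,\sigma(s,\bvol_s)\,dB_s$ and $\int_0^\cdot \f{u}{\ak(\cdot-s)}\,dW_s$.

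To legitimately apply the cited formula I would verify its hypotheses in our setting: (i) the regularity $v \in C^{1,2,2}_{+,\frac12}$, already in hand; (ii) the integrability of the semimartingale $(\bitgr_t,\btheta^t)$ and of all the integrands above, needed both for the expansion to make sense and for the stochastic integrals to be genuine (square-integrable) martingales — this follows from the controlled-growth bounds of $u \in C^{2,2}_+(\bar\Lambda)$ combined with the uniform-in-time moment estimates of Lemma \ref{lm:controle_des_moments}, in particular $\sup_{t}\EE[|\itgr_T^{t,\bitgr_t,\btheta^t}|^p] < \infty$ and $\sup_{t}\sup_{s\in[t,T]}\EE[\exp(p\,\vol_s^{t,\btheta^t})] < \infty$ for all $p\ge 1$; and (iii) the kernel conditions of \ref{hyp:kernel} ($K(t)\lesssim t^{H-\frac12}$, $|K'(t)|\lesssim t^{H-\frac32}$, monotonicity of $K$, and $\ak \le c_{K,\ak}K$), which coincide with the standing assumptions on the kernel in \cite{bonesini2023rough}.

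The delicate point — and the reason the machinery of Section \ref{sec:regularity_singular_directions} is needed — is that the direction $\ak(\cdot-s)$ may be singular at $s$, since \ref{hyp:kernel} controls $\ak$ only by $c_{K,\ak}K$, which blows up like $(\cdot)^{H-\frac12}$; hence the path derivatives $\f{u}{\ak(\cdot-s)}$ and $\ff{u}{\ak(\cdot-s)}{\ak(\cdot-s)}$ must be read through the truncation limit \eqref{eq:spatial_derivative}, $\ak^{s,\delta} \to \ak(\cdot-s)$ as $\delta\to 0$. Here $u \in C^{0,2,2}_{+,\frac12}(\Lambda)$ is precisely what guarantees that the contributions of directions supported near the singularity vanish at rate $\delta^\alpha$ (resp.\ $\delta^{2\alpha}$), as quantified in Definition \ref{def:Cplusalpha}, which is what allows one to pass to the limit $\delta\to 0$ inside the time integrals and expectations of the Itô expansion while keeping control uniform in $s$. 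Carrying out this limiting argument — which, as for Proposition \ref{prop:regularity_singular_directions}, is a straightforward adaptation of \citet[Proposition 2.23]{bonesini2023rough} and of the corresponding step in the proof of the functional Itô formula there — is the only genuinely technical part; once it is done, \eqref{eq:Ito} follows.
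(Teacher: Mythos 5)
Your route is the same as the paper's: rewrite $(\bitgr,\bvol)$ as the two-dimensional process $\ov{Z}$ with dual-time semimartingale $\ov{\Pi}^t$ from the proof of Proposition \ref{prop:FK_formula}, invoke the functional Itô formula of \citet[Theorem 3.17]{viens2019} (the paper cites Viens--Zhang rather than \cite{bonesini2023rough}, but this is immaterial) for $v(t,(\omega_1,\omega_2))=u(t,\omega_1,\omega_2)$, justify its applicability by the regularity $u\in C^{1,2,2}_{+,\frac12}(\Lambda)$ from Propositions \ref{prop:regularity_singular_directions} and \ref{prop:FK_formula} together with the moment bounds of Lemma \ref{lm:controle_des_moments}, and translate the resulting terms back into \eqref{eq:Ito}. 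Your discussion of the singular directions $\ak(\cdot-s)$ and of why the class $C^{0,2,2}_{+,\frac12}$ is exactly what permits the truncation limit \eqref{eq:spatial_derivative} is consistent with the paper.

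The one step you gloss over is that the cited Itô formula expands $v(t,\ov{Z}\oplus_t\ov{\Pi}^t)$, i.e.\ the function evaluated at the concatenation of the realized path with the frozen semimartingale, whereas the claimed identity \eqref{eq:Ito} is for $u(t,\bitgr_t,\btheta^t)=v(t,\ov{\Pi}^t)$. The paper closes this by noting that $(\ov{Z}\oplus_t\ov{\Pi}^t)_T=\ov{\Pi}^t_T$, so that $Z_T^{t,\ov{Z}\oplus_t\ov{\Pi}^t}$ and $Z_T^{t,\ov{\Pi}^t}$ solve the same (explicit, hence strongly unique) Volterra equation from the same initial condition and are indistinguishable, giving $v(t,\ov{Z}\oplus_t\ov{\Pi}^t)=v(t,\ov{\Pi}^t)$. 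This is a short argument, but without it the left-hand sides of the two formulas do not visibly coincide; you should add it (or an equivalent observation that $u(t,x,\omega)$ depends on $\omega$ only through $\omega\ind_{[t,T]}$). Likewise, the final translation of the path derivatives of $v$ against the directions $b^{r,\ov{Z}}$ and $\sigma^{r,\ov{Z}}$ into the five drift/diffusion terms of \eqref{eq:Ito} deserves an explicit computation (the paper points to \citet[Remark 2.12]{bonesini2023rough}); in particular the cross term $\rho\,\sigma(s,\bvol_s)\f{(\p_x u)}{\ak(\cdot-s)}$ comes out of the second-order path derivative of $v$ in the matrix directions, not from a separately imposed covariation rule as your phrasing suggests. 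These are fixable omissions rather than errors of strategy.
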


\begin{proof}
Using the same notation as in the previous proof, we apply \citet[Theorem 3.17]{viens2019} to $v$ along with the two-dimensional Volterra process $\overline{Z}$ and its associated dual-time semimartingale $\overline{\Pi}$. 
  Due to Proposition \ref{prop:regularity_singular_directions} and Proposition \ref{prop:FK_formula}, $v$ belongs to $C^{1,2}_{+,\frac{1}{2}}(\Lambda)$. It follows that $\PP$-a.s., for all $t \in [0,T]$,
  \begin{align*}
   v(t,\overline{Z} \oplus_t \overline{\Pi}^t) = & v(0,(X_0,0)) + \int_0^t \partial_t v(r,\overline{Z} \oplus_r \overline{\Pi}^r) dr + \frac{1}{2} \int_0^t \langle \partial^2_{\omega} v(r,\overline{Z} \oplus_r \overline{\Pi}^r), (\sigma^{r,\overline{Z}},\sigma^{r,\overline{Z}}) \rangle dr \\
    & + \int_0^t \langle \partial_{\omega} v(r,\overline{Z} \oplus_r \overline{\Pi}^r), b^{r,\overline{Z}} \rangle dr + \int_0^t \langle \partial_{\omega} v(r,\overline{Z} \oplus_r \overline{\Pi}^r), \sigma^{r,\overline{Z}} \rangle d\bo{W}_r.
  \end{align*}
By definition, $v(t,\overline{Z} \oplus_t \overline{\Pi}^t) = \EE[ \phi(Z_T^{t,\overline{Z} \oplus_t \overline{\Pi}^t})]$ where
\begin{equation*}
 Z_T^{t,\overline{Z} \oplus_t \overline{\Pi}^t} = (\overline{Z} \oplus_t \overline{\Pi}^t)_T + \int_t^T \gamma(t,r,Z_r^{t,\overline{Z} \oplus_t \overline{\Pi}^t}) dr + \int_t^T \mu(t,r,Z_r^{t,\overline{Z} \oplus_t \overline{\Pi}^t}) d \bo{W}_r.
\end{equation*}
Since $(\overline{Z} \oplus_t \overline{\Pi}^t)_T = \overline{\Pi}^t_T$, the processes $Z_T^{t,\overline{Z} \oplus_t \overline{\Pi}^t}$ and $Z_T^{t,\overline{\Pi}^t}$ both are solution of the same stochastic Volterra equation started with the same initial condition, and since this equation is explicit in view of the coefficients $\gamma$ and $\mu$, it admits strong uniqueness and the above processes are indistinguishable, leading to 
\begin{equation*}
  v(t,\overline{Z} \oplus_t \overline{\Pi}^t) = v(t, \overline{\Pi}^t).
\end{equation*}
The result follows afterwards by computing explicitly the path derivatives of $v$ against the directions $b^{r,\overline{Z}}$ and $\sigma^{r,\overline{Z}}$, e.g using \citet[Remark 2.12]{bonesini2023rough}. 
\end{proof}

\section{Proof of Theorem \ref{thm:weakcvg_integrated}} \label{sec:proof_thm_1}

We denote 
\begin{equation*}
\begin{aligned}
\Ee_T  \coloneq  \EE[\phi(\bitgr_T)] - \EE[\phi(\itgr_T)] 
\end{aligned}
\end{equation*}
the weak error associated with the approximation of the integrated Volterra process \eqref{eq:integrated_model}.  First, we make use of the representation $\EE[\phi(\itgr_T)] = \EE[u(0,x,0)]$ and $\EE[\phi(\bitgr_T)] = \EE[u(T,\bxi_T,\btheta^T)]$. Applying the functional Itô formula in Proposition \ref{prop:functional_ito} on the function $u \in C^{1,2,2}_{+,\frac{1}{2}}(\Lambda)$ and the semimartingale $(\bxi,\btheta)$ between the times $0$ and $T$ and taking the expectation, we obtain
\begin{equation} \label{eq:first_order_dev}
\begin{aligned}
\Ee_T 
& =  \EE \int_0^T \p_t u(t,\bxi_t,\btheta^t) dt  + \EE \int_0^T \Big\{ b(t,\bvol_t) \p_x u(t,\bxi_t,\btheta^t) + \frac{1}{2} \sigma^2(t,\bvol_t) \pp_{xx} u(t,\bxi_t,\btheta^t) \Big\} dt.\\ 
& \quad   + \EE \int_0^T \Big\{ \rho \ \sigma(t,\bvol_t) \f{(\p_x u(t,\bxi_t,\btheta^t))}{\ak^t} + \frac{1}{2} \ff{u(t,\bxi_t,\btheta^t)}{\ak^t}{\ak^t} \Big\} dt. 
\end{aligned}
\end{equation}
Then thanks to the path-dependent Feynman-Kac formula in Proposition \ref{prop:FK_formula}, we can rewrite the first integral in the right-hand side of \eqref{eq:first_order_dev} in terms of the path derivatives of $u$. All the derivatives that are not taken in the direction of the kernel cancel out with the ones in \eqref{eq:first_order_dev}, and we are left with
\begin{equation*}
\begin{aligned}
    \Ee_T = & \rho \EE \int_0^T  \sigma(t,\bvol_t) \f{(\p_x u(t,\bxi_t,\btheta^t))}{\Delta K^t} dt \\ & + \frac{1}{2} \EE \int_0^T \Big\{ \ff{u(t,\bxi_t,\btheta^t)}{\ak^t}{\ak^t} - \ff{u(t,\bxi_t,\btheta^t)}{K^t}{K^t} \Big\} dt \\ 
    = & \rho \EE \int_0^T  \sigma(t,\bvol_t) \f{(\p_x u(t,\bxi_t,\btheta^t))}{\Delta K^t} dt  + \frac{1}{2} \EE \int_0^T \ff{u(t,\bxi_t,\btheta^t)}{\Delta K^t}{\Sigma K^t} dt,
\end{aligned}
\end{equation*}
where we used the bilinearity and the symmetry of the map $\partial^2_{\omega}u(t,\bxi_t,\btheta^t)$ to factorise the last term.
We use now the representation of the derivatives from Proposition \ref{prop:representation_first_order} and Lemma \ref{lm:representation_first_order_singular} with $Q_{(1)} = \Delta K$ and $Q_{(2)} = \Sigma K$, both belonging to $L^2([0,T],\RR_+)$, being continuous on $(0,T]$ and satisfying $\lim_{\delta \to 0} \delta Q_{(i)}^2(\delta) = \lim_{\delta \to 0} \int_0^\delta Q_{(i)}^2(r) dr = 0$ for $i=1,2$ by \ref{hyp:kernel}. From this, we can decompose each of the integrals above in sum of integrals  that we  analyse separately. From the representation of $\p_\omega \p_x u$, we get 
\begin{align*}
&\EE \left[ \int_0^T  \sigma(t,\bvol_t) \f{(\p_x u(t,\bxi_t,\btheta^t))}{\Delta K^t} dt \right] \\
&=  \int_0^T \int_t^T  \EE \Big[ \sigma(t,\bvol_t) \ \EE \Big[\phi''(\Flowx) \ \p_{x} b (s,\Flowv) \ \Big| \ \Ff_t \big] \Big] \Delta K(s-t) ds dt  \\
&  \quad + \int_0^T \EE \bigg[ \sigma(t,\bvol_t) \EE \bigg[\phi''(\Flowx) \int_t^T \p_{x} \sigma (s,\Flowv) \Delta K(s-t) dB_s \ \Big| \ \Ff_t  \bigg] \bigg] dt :=   {F}_1  +  {F}_2.
\end{align*}
Furthermore, from the representation of $\p^2_{\omega\omega} u$ we get 
\begin{align*}
& \EE\left[ \int_0^T \ff{u(t,\bxi_t,\btheta^t)}{\Delta K^t}{\Sigma K^t} dt \right] \\
& = \int_0^T \EE \Bigg[ \EE \bigg[ \phi''(\Flowx) \left( \int_t^T \p_{x} b(s,\Flowv) \Delta K(s-t) ds \right) \left( \int_t^T \p_{x} b(s,\Flowv) \Sigma K(s-t) ds \right) \ \Big| \ \Ff_t \bigg] \Bigg] dt \\
& \quad + \int_0^T \EE \Bigg[ \EE \bigg[ \phi''(\Flowx) \left( \int_t^T \p_{x} b (s,\Flowv) \Delta K(s-t) ds \right) \left( \int_t^T \p_{x} \sigma (s,\Flowv) \Sigma K(s-t) dB_s \right) \ \Big| \ \Ff_t \bigg] \Bigg] dt \\
& \quad +  \int_0^T \EE \Bigg[ \EE \bigg[ \phi''(\Flowx) \left( \int_t^T \p_{x} b (s,\Flowv) \Sigma K(s-t) ds \right) \left( \int_t^T \p_{x} \sigma (s,\Flowv) \Delta K(s-t) dB_s \right) \ \Big| \ \Ff_t \bigg] \Bigg] dt \\
& \quad + \int_0^T \EE \Bigg[ \EE \bigg[ \phi''(\Flowx) \left( \int_t^T \p_{x} \sigma (s,\Flowv) \Sigma K(s-t) dB_s \right) \left( \int_t^T \p_{x} \sigma (s,\Flowv) \Delta K(s-t) dB_s \right) \ \Big| \ \Ff_t \bigg] \Bigg] dt \\
& \quad +  \int_0^T \int_t^T \EE \Big[ \EE \big[ \phi'(\Flowx) \p_{xx}^2 b (s,\Flowv) \ \big| \ \Ff_t \big] \Big] \Delta (K^2)(s-t)  ds dt \\
& \quad +  \int_0^T \int_t^T \EE \Big[ \EE \big[ \phi''(\Flowx) (\sigma \p_{xx}^2  \sigma )(s,\Flowv) \Delta (K^2)(s-t) ds \ \big| \ \Ff_t \big] \Big] dt \\
& \quad +  \rho \ \int_0^T \int_t^T \EE \Bigg[ \EE \bigg[ \phi''(\Flowx) \p_{xx}^2  \sigma (s,\Flowv) \left( \int_s^T \p_{x} b (\ell,V_{\ell}^{t,\btheta^t}) K(\ell-s) d\ell \right) \Delta (K^2)(s-t) ds \ \Big| \ \Ff_t \bigg] \Bigg] dt \\
& \quad +  \rho \   \int_0^T \int_t^T \EE \Bigg[ \EE \bigg[ \phi''(\Flowx) \p_{xx}^2  \sigma (s,\Flowv) \left( \int_s^T \p_{x} \sigma (\ell,V_{\ell}^{t,\btheta^t}) K(\ell-s) dB_{\ell} \right) \Delta (K^2)(s-t) ds \ \Big| \ \Ff_t \bigg] \Bigg] dt.\\
& :=  {F}_3 +    {F}_4 +   {F}_5 +   {H}_1 +  {H}_2 +   {H}_3 + \rho \   {H}_4 + \rho  \   {H}_5. 
\end{align*}
We thus have obtained a decomposition of the error in  10 different terms: 
\begin{equation}\label{eq:error_decompos}
\Ee_T = \rho  ( {F}_1 +    {F}_2) + \frac{1}{2} \left( {F}_3 +    {F}_4 +   {F}_5 \right) +   \frac{1}{2} \left(  {H}_1 +  {H}_2 +   {H}_3 \right) + \frac{\rho}{2} \left(  {H}_4 +  {H}_5 \right). 
\end{equation}
These different terms, although sharing some similarities, each have a distinct structure.
Thus, a specific analysis is required for each of them to separate the two different contributions. 
Furthermore, a detailed analysis of each term also highlights the influence of the correlation $\rho$ between the two Brownian motions $B$ and $W$ on the constant multiplying the resulting error term.  
In particular, we will show that the terms $ {F}_1,  {F}_2,  {F}_3,  {F}_4$, and $ {F}_5$ contribute to the error term of the type $\| K - \ak \|_{L^1([0,T])}$, whereas the terms $ {H}_2$, $ {H}_3$, $ {H}_4$, and $ {H}_5$ contribute to the error term of the type $\| (K^2) - (\ak^2) \|_{L^1([0,T])}$. 
The term $ {H}_1$, slightly more technical to analyse, leads to both types of contributions.

An important point to highlight is that even in the case where $\rho = \sigma = 0$, the term $ {H}_2$ persists, involving the occurrence of the difference of squares $\| (K^2) - (\ak^2) \|_{L^1([0,T])}$.
In fact, this contribution to the error vanishes only under the highly restrictive conditions where $\p^2_{xx} b$, $\p^2_{xx} \sigma$, and $\p_{x}\sigma$ are all zero, which corresponds to a linear drift and a constant diffusion coefficient in the definition of the integrated Volterra process $X$. 

\subsection{Contributions in $\|K-\ak\|_{L^1([0,T])}$}

\subsubsection*{Estimate of $ {F}_1$}
We begin by the estimation of the term $ {F}_1 = \int_0^T \int_t^T  \EE \Big[ \sigma(t,\bvol_t) \ \EE \Big[\phi''(\Flowx) \ \p_{x} b (s,\Flowv) \ \Big| \ \Ff_t \big] \Big] \Delta K(s-t) ds dt$. This term contains only one occurrence of $\Delta K$ in a time integral, so it suffices to find a direct upper bound for the expectation of the stochastic terms, which is achieved using standard inequalities and our hypotheses. We will nevertheless detail the way to obtain this upper bound in the analysis of $ {F}_1$, allowing us to go faster on this point in the analysis of the other error terms.
We apply the triangle, Cauchy-Schwarz and Jensen inequalities:
\begin{align*}
| {F}_1| & \leq \int_0^T \int_t^T \mathbb{E} \left[ \left| \sigma(t,\bvol_t) \EE \big[\phi''(\Flowx) \p_{x} b (s,\Flowv) \ \big| \ \Ff_t \big] \right| \right] |\Delta K| (s-t) ds dt \\
& \leq \int_0^T \int_t^T \mathbb{E} \left[ \sigma(t,\bvol_t)^2 \right]^{\frac{1}{2}} \left(  \mathbb{E} \left[ \left| \EE \big[\phi''(\Flowx) \p_{x} b (s,\Flowv) \ \big| \ \Ff_t \big] \right|^2 \right] \right)^{\frac{1}{2}} |\Delta K| (s-t) ds dt \\
& \leq \int_0^T \mathbb{E} \left[ \sigma(t,\bvol_t)^2 \right]^{\frac{1}{2}} \int_t^T   \left( \mathbb{E} \left[ \phi''(\itgr_T^{t,\bitgr_t,\btheta^t})^2 \p_{x} b (s,\vol_s^{t,\btheta^t})^2 \right] \right)^{\frac{1}{2}} |\Delta K| (s-t) ds dt.
\end{align*}
By \ref{hyp:regularity}, one has $\sigma(t,x) \leqs 1 + \exp(\nu_{\sigma} (x+t))$, $\p_{x} b (s,x) \leqs 1+ \exp(\nu_{b} (x+t))$ and $|\phi''(x)| \leqs 1 + |x|^{\kappa_\phi}$. It follows, since $\bvol_t = \vol_t^{t,\btheta^t}$,  that
\begin{align*}
\mathbb{E} \left[ \sigma(t,\bvol_t)^2 \right] \leqs  1 + e^{2 \nu_\sigma T} \sup_{t \in [0,T]} \EE[ \exp(2 \nu_\sigma \bvol_t )] \leqs 1 + e^{2 \nu_\sigma T} \sup_{t \in [0,T]} \sup_{s \in [t,T]} \EE[ \exp(2 \nu_\sigma \vol_s^{t,\btheta^t} )].
\end{align*}
Due to the moment control from Lemma \ref{lm:controle_des_moments}, we obtain
\begin{align*}
\left( \mathbb{E} \left[ \sigma(t,\bvol_t)^2 \right] \right)^\frac{1}{2} 
\leqs \left( 1 + e^{\overline{m}^{(2)}_{2 \nu_{\sigma}} \|K\|_{L^2([0,T])}^2 } \right)^\frac{1}{2} \leqs 1 + e^{\overline{m}^{(2)}_{2 \nu_{\sigma}} \|K\|_{L^2([0,T])}^2 }.
\end{align*}
For the second expectation, we apply Cauchy-Schwarz a second time, and using similar arguments, we deduce that
\begin{align*}
\left( \mathbb{E} \left[ \phi''(\itgr_T^{t,\bitgr_t,\btheta^t})^2 \p_{x} b (s,\vol_s^{t,\btheta^t})^2 \right] \right)^{\frac{1}{2}} \leqs & (1 + e^{\overline{m}^{(1)}_{4 \kappa_{\phi}} \|K\|_{L^2([0,T])}^2 })^{\frac{1}{4}} (1 + e^{\overline{m}^{(2)}_{4 \nu_{b}} \|K\|_{L^2([0,T])}^2 })^{\frac{1}{4}} \\
  & \leq (1 + e^{\overline{m}^{(1)}_{4 \kappa_{\phi}} \|K\|_{L^2([0,T])}^2 })(1 + e^{\overline{m}^{(2)}_{4 \nu_{b}} \|K\|_{L^2([0,T])}^2 }) \\
  & \leq 3 + 3 e^{ M_{\kappa_\phi, \nu_b} \|K\|_{L^2([0,T])}^2 } \leqs 1 + e^{ M_{\kappa_\phi, \nu_b} \|K\|_{L^2([0,T])}^2 }
\end{align*}
where $ M_{\kappa_\phi, \nu_b} \coloneqq \max\{ \overline{m}^{(1)}_{4 \kappa_{\phi}} \ ; \ \overline{m}^{(2)}_{4 \nu_{b}} \ ; \  \overline{m}^{(1)}_{4 \kappa_{\phi}} \overline{m}^{(2)}_{4 \nu_{b}} \}$. 
Therefore, from the previous estimation and the change of variable $u = s-t$, we have
\begin{align*}
| {F}_1| \leqs & (1 + e^{C_{ {F}_1} \|K\|_{L^2([0,T])}^2 }) \int_0^T \int_t^T |\Delta K| (s-t) ds dt \\
& = (1 + e^{ C_{ {F}_1} \|K\|_{L^2([0,T])}^2 }) \int_0^T \| \Delta K \|_{L^1([0,t])} dt
\end{align*}
for  the constant $C_{ {F}_1} \coloneqq \max\{ \overline{m}^{(2)}_{2 \nu_{\sigma}} \ ; \ M_{\kappa_\phi, \nu_b} \ ; \  \overline{m}^{(2)}_{2 \nu_{\sigma}} M_{\kappa_\phi, \nu_b} \} $ that does not depend on $K$. 

\subsubsection*{Estimate of $ {F}_2$}
We now turn to the second term $ {F}_2 = \int_0^T \EE \bigg[ \sigma(t,\bvol_t) \EE \bigg[\phi''(\Flowx) \int_t^T \p_{x} \sigma (s,\Flowv) \Delta K(s-t) dB_s \ \Big| \ \Ff_t  \bigg] \bigg] dt$. As in the term $F_1$, there is an occurrence of $\Delta K$, but it now takes place in a stochastic integral. A straightforward use of the Itô isometry would result in a time integral of $(\Delta K)^2$, which is not desirable since it is analogue to the strong rate of convergence. Instead, the use of Malliavin calculus allows the recovery of a time integral of $\Delta K$ itself. 
More precisely, we make use of the (conditional) Malliavin integration by part, which is legitimate since $\phi'' \in C^1(\RR,\RR)$ by \ref{hyp:regularity}. This leads to
\begin{equation}\label{eq:estimate_f2}
\begin{aligned}
{F}_2 & =  \int_0^T \EE \left[ \sigma(t,\bvol_t) \int_t^T \EE \left[ 
\mall{s}{\phi''(\Flowx)} \ \p_{x} \sigma (s,\Flowv) \ \big| \ \Ff_t \right] \Delta K(s-t) ds  \right] dt \\
& = \int_0^T   \EE 
\left[ \int_t^T \EE \left[ \mall{s}{\phi''(\Flowx)} \ \p_{x} \sigma (s,\Flowv) \sigma(t,\bvol_t) \ \Big| \ \Ff_t \right]  
\Delta K(s-t) ds \right] dt \\
& = \int_0^T \int_t^T \EE \left[ \mall{s}{\phi''(\Flowx)} \ \p_{x} \sigma (s,\Flowv) \sigma(t,\bvol_t) \right]  
\Delta K(s-t) ds dt.
\end{aligned}
\end{equation} 
We recall the representation \eqref{eq:malliavin_derivative_phi} of the Malliavin derivative:
\small{
\begin{equation*}
  \mall{s}{\phi''(\Flowx)}
    = \phi''' (\Flowx) \left(\sigma (s, \Flowv) + \rho
    \int_s^T \p_{x} b  (\ell,V_{\ell}^{t,\btheta^t}) K (\ell - s) d \ell + \rho \int_s^T
    \p_{x} \sigma  (\ell,V_{\ell}^{t,\btheta^t}) K (\ell - s) d B_{\ell} \right).
\end{equation*}
}
The process $(\p_{x} \sigma (\ell,V_{\ell}^{t,\btheta^t}) K (\ell - s))_{\ell \in [s,T]}$ is adapted to the filtration $(\mathcal{F}_{\ell})_{\ell \in [s,T]}$ and left-continuous hence predictable, since $\ell \mapsto K(\ell-s)$ is left-continuous on $[s,T]$ by \ref{hyp:kernel} and $\p_{x} \sigma (\cdot, v)$ is continuous for almost every $v \in \RR$ by \ref{hyp:regularity}. It follows that the process $(\int_s^t \p_{x} \sigma (\ell,V_{\ell}^{t,\btheta^t}) K (\ell - s) d B_{\ell})_{t \in [s,T]}$ is a $(\mathcal{F}_{t})_{t \in [s,T]}$-local martingale, allowing to apply BDG inequality. 
Hence, for any $p \geq 1$, we may apply Cauchy-Schwarz inequality, Minkoswki's integral inequality and BDG inequality with constant $c_p$ to get
\begin{align*}
& \| \mall{s}{\phi''(\itgr_T^{t,\bitgr_t,\btheta^t})} \|_{L^p(\Omega)} \\
& \leq \| \phi'''(\itgr_T^{t,\bitgr_t,\btheta^t}) \|_{L^{2p}(\Omega)} \| \sigma(s,\vol_s^{t,\btheta^t}) \|_{L^{2p}(\Omega)} \\
& \quad + \rho  \| \phi'''(\itgr_T^{t,\bitgr_t,\btheta^t}) \|_{L^{2p}(\Omega)}  \Bigg(  \int_s^T \| \p_{x} b (\ell,\vol_{\ell}^{t,\btheta^t}) \|_{L^{2p}(\Omega)} K(\ell-s) d\ell +  c_{2p}^{\frac{1}{2p}} \left(\int_s^T \| \p_{x} \sigma (\ell, \vol_{\ell}^{t,\btheta^t})^2 \|_{L^{p}(\Omega)} K^2(\ell-s) d\ell \right)^{\frac{1}{2}} \Bigg).
\end{align*}
Then exploiting the growth control of $\phi''', \sigma, \p_{x} b $ and $\p_{x} \sigma $ given by \ref{hyp:regularity} and the moment control from Lemma \ref{lm:controle_des_moments}, we derive that for every $p \geq 1$,
\begin{equation}  \label{eq:estimate_mall_phi}
  \sup_{t \in [0,T]} \sup_{s \in [t,T]} \| \mall{s}{\phi''(\itgr_T^{t,\bitgr_t,\btheta^t})} \|_{L^p(\Omega)} \leqs (1 + e^{C_p^{\textit{Mall}} \|K\|_{L^2([0,T])}} ) (1 + \rho (\| K \|_{L^1([0,T])} +  \|K\|_{L^2([0,T])})),
\end{equation}
where the constant $C_p^{\textit{Mall}} >0 $ does not depend on $K$. We get back to \eqref{eq:estimate_f2}, apply Hölder inequality with $p,q,r \geq 1$ such that $\frac{1}{p} + \frac{1}{q} + \frac{1}{r} = 1$, use the estimate \eqref{eq:estimate_mall_phi} along with the growth and moment control to handle the terms $\| \p_{x} \sigma (s,V_s^{t,\btheta^t}) \|_{L^q(\Omega)}$ and $\| \sigma(t,V_t^{t,\btheta^t}) \|_{L^r(\Omega)}$. We obtain:
\begin{align*}
  | {F}_2| \leqs & (1 + e^{C_{ {F}_2} \|K\|_{L^2([0,T])}} ) (1 + \rho (\| K \|_{L^1([0,T])} +  \|K\|_{L^2([0,T])}))  \int_0^T \| \Delta K \|_{L^1([0,t])} dt
\end{align*}
with $C_{ {F}_2} >0$ not depending on $K$. 

Notice that the inequality $\rho \leq \indic{\rho > 0}$ holds for $\rho \in [0,1]$, and using \ref{hyp:kernel} we have
\begin{align*}
  \| K \|_{L^1([0,T])} = \int_0^T K(t) dt \leqs \int_0^T t^{H-\frac{1}{2}} dt = \frac{1}{2H+1} T^{H+\frac{1}{2}} \leqs T\vee 1.
\end{align*}
So that we can write
\begin{align*}
  | {F}_2| \leqs & (1 + e^{C_{ {F}_2} \|K\|_{L^2([0,T])}} ) (1 + \indic{\rho > 0} \|K\|_{L^2([0,T])})  \int_0^T \| \Delta K \|_{L^1([0,t])} dt.
\end{align*}

\subsubsection*{Estimate of $ {F}_3$}
We consider $F_3 =  \int_0^T \EE \bigg[ \phi''(\Flowx) \left( \int_t^T \p_{x} b(s,\Flowv) \Delta K(s-t) ds \right) \left( \int_t^T \p_{x} b(s,\Flowv) \Sigma K(s-t) ds \right) \bigg] dt$. The technique is very close to what is done for $ {F}_1$. One may observe that if we erase the random terms involved in the definition of ${F}_3$, the latter just consists of the integral of a product between the time integral of $\Delta K$ and the time integral of $\Sigma K$. It is therefore sufficient to find a direct upper-bound for this expectation. Toward this view, we use the Fubini-Lebesgue theorem to push the expectations inside the integral, leading to
\begin{align*}
F_3 = \int_0^T \int_t^T \int_t^T \EE \Big[ \phi''(\Flowx) \p_x b(s_1,\vol_{s_1}^{t,\btheta^t}) \p_x b(s_2,\vol_{s_2}^{t,\btheta^t}) \Big] \Delta K(s_1 - t) \Sigma K(s_2 - t) ds_1 ds_2 dt.
\end{align*}
We can now apply Hölder inequality with $p,q,r \geq 1$ such that $\frac{1}{p} + \frac{1}{q} + \frac{1}{r} = 1$ and use the growth and moment controls \ref{hyp:regularity} and Lemma \ref{lm:controle_des_moments}. We obtain straightforwardly that, for some constant $C_{ {F}_3} >0$ not depending on $K$:
\begin{align*}
  | {F}_3| \leqs & (1 + e^{C_{ {F}_3} \|K\|_{L^2([0,T])}^2 }) \int_0^T \int_t^T \int_t^T |\Delta K(s_1-t)| \Sigma K(s_2-t) ds_1 ds_2 dt \\
  & \leq (1 + e^{C_{ {F}_3} \|K\|_{L^2([0,T])}^2 }) \int_0^T \| \Delta K \|_{L^1([0,t])} dt \times  \| \Sigma K \|_{L^1([0,T])}.
\end{align*}
Using \ref{hyp:kernel}, we have
\begin{align*}
  \| \Sigma K \|_{L^1([0,T])} = & \int_0^T (K(s) + \ak(s)) ds
   \leq (1+c_{K,\ak}) \int_0^T K(s) ds \leqs \int_0^T s^{H-\frac{1}{2}} ds = \frac{1}{2H+1} T^{H+\frac{1}{2}}.
\end{align*}
Then, since $H \in (0,\frac{1}{2})$, we have
\begin{equation} \label{eq:control_sigma_K_L1}
  \| \Sigma K \|_{L^1([0,T])} \leqs T\vee 1,
\end{equation}
and therefore,
\begin{align*}
  | {F}_3| \leqs & (1 + e^{C_{ {F}_3} \|K\|_{L^2([0,T])}^2 })  \int_0^T \| \Delta K \|_{L^1([0,t])} dt.
\end{align*}

\subsubsection*{Estimate of $ {F}_4$ }

We recall that $ {F}_4 = \int_0^T \EE \Big[ \phi''(\Flowx) \left( \int_t^T \p_x b(s,\Flowv) \Delta K(s-t) ds \right) \left( \int_t^T \p_x \sigma(s,\Flowv) \Sigma K(s-t) dB_s \right) \Big] dt$. 
This expression is quite standard to deal with as it already contains a time integral of $\Delta K$. Nonetheless, we will separate the cases where $\rho > 0$ and $\rho = 0$, as we can achieve a better constant in the latter case by using Malliavin calculus. 
When $\rho > 0$, we apply Hölder inequality with $p=4,q=4,r=2$ so that $\frac{1}{p}+\frac{1}{q}+\frac{1}{r}=1$. We use Minkoswki integral inequality and BDG inequality\footnote{Note that the process $\left( \int_t^r \p_x\sigma(s,\vol_s) \Sigma K(s-t) dB_s \right)_{r \in [t,T]}$ is a $(\Ff_r)_{r \in [t,T]}$-local martingale because $s \mapsto \p_x\sigma(s,\vol_s) \Sigma K(s-t)$ is $(\Ff_s)_{s \in [t,T]}$-adapted and left-continuous on $[t,T]$, hence $(\Ff_s)_{s \in [t,T]}$-predictable. 
This justifies the application of the BDG inequality.} to obtain 
\begin{align*}
  | {F}_4| \leq \ c_1  \int_0^T \Bigg\{ \| \phi''(X_T^{t,\bitgr_t,\btheta^t}) \|_{L^4(\Omega)}
  &  \left( \int_t^T  \| \p_{x} b (V_s^{t,\btheta^t}) \|_{L^4(\Omega)} \Delta K(s-t) ds \right) \\ 
& \qquad \times \left( \int_t^T \| \p_{x} \sigma (V_s^{t,\btheta^t})^2 \|_{L^{1}(\Omega)} \Sigma K(s-t)^2 ds \right)^\frac{1}{2} \Bigg\} dt.
\end{align*}
Then using \ref{hyp:regularity}, Lemma \ref{lm:controle_des_moments} and \ref{hyp:kernel}, we deduce that
\begin{align*}
  | {F}_4| \leqs & \ \|K \|_{L^2([0,T])} \ (1 + e^{C'_{ {F}_4} \|K\|_{L^2([0,T])}^2 }) \int_0^T \| \Delta K \|_{L^1([0,t])} dt
\end{align*}
for some $C'_{ {F}_4} >0$ not depending on $K$.

When $\rho = 0$, we use the Malliavin integration by part, taking advantage of the fact that $\mall{s}{\p_x b(V_s)} = \rho \mallw_s \p_x b(V_s) = 0$  and that $\mall{s}{\phi''(X_T)} = \phi'''(X_T) \sigma(V_s)$ for all $s \in [t,T]$. We obtain
\begin{align*}
   {F}_4 = & \int_0^T \int_t^T \int_t^T \EE [ \phi'''(\itgr_T^{t,\bitgr_t,\btheta^t}) (\sigma \p_{x} \sigma ) (s,V_s^{t,\btheta^t}) \p_{x} b (r,V_r^{t,\btheta^t}) \sigma(s,V_s^{t,\btheta^t}) ] \Delta K(r-t) \Sigma K(s-t) dr ds dt.
\end{align*}
We may estimate the expectation of the stochastic processes using Hölder inequality, \ref{hyp:regularity} and Lemma \ref{lm:controle_des_moments} and \eqref{eq:control_sigma_K_L1} to get
\begin{align*}
  | {F}_4| \leqs & (1 + e^{C''_{ {F}_4} \|K\|_{L^2([0,T])}^2 }) \int_0^T \| \Delta K \|_{L^1([0,t])} dt 
\end{align*}
for some $C''_{ {F}_4} >0$ not depending on $K$.
Letting $C_{ {F}_4} = \max({C'_{ {F}_4},C''_{ {F}_4}})$, we conclude that for every $\rho \in [0,1]$, 
\begin{align*}
  | {F}_4| \leqs & (1 + \indic{\rho > 0} \|K \|_{L^2([0,T])}) (1 + e^{C_{ {F}_4} \|K\|_{L^2([0,T])}^2 }) \int_0^T \| \Delta K \|_{L^1([0,t])} dt.
\end{align*}

\subsubsection*{Estimate of $ {F}_5$}
Recall that $ {F}_5 = \int_0^T \EE \Big[  \phi''(\Flowx) \left( \int_t^T b'(s,\Flowv) \Sigma K(s-t) ds \right) \left( \int_t^T \sigma'(s,\Flowv) \Delta K(s-t) dB_s \right) \Big] dt$. For this expression, it is convenient to apply the Malliavin integration by part, since the occurrence of $\Delta K$ lies in the stochastic integral. Thus we write
\begin{equation*}
   {F}_5 =  \int_0^T \int_t^T \EE \left[ \mallext{s}{\phi''(\Flowx)  \int_t^T \p_{x} b (r,\Flowv) \  \Sigma K(r-t) dr }  \p_{x} \sigma (s,\Flowv) \right] \Delta K(s-t) ds dt.
\end{equation*}
By the product rule and the inversion of the Lebesgue integral with the Malliavin derivative, we have
\begin{align*}
  \mallext{s}{\phi''(\Flowx)  \int_t^T \p_{x} b (r,\vol_r^{t,\btheta^t}) \Sigma K(r-t) dr } = & \mall{s}{\phi''(\Flowx)} \left( \int_t^T \p_{x} b (r,\vol_r^{t,\btheta^t}) \Sigma K(r-t) dr \right) \\
  & + \phi''(\Flowx) \left( \int_t^T \!\!\! \mall{s}{\p_{x} b (r,\vol_r^{t,\btheta^t})} \Sigma K(r-t) dr \right)
\end{align*}
where $\mall{s}{\p_{x} b (r,\vol_r^{t,\btheta^t})} = \rho \p_{xx}^2 b (r,\vol_r^{t,\btheta^t}) \ \mallw_s \vol_r^{t,\btheta^t} = \rho \p_{xx}^2 b (r,\vol_r^{t,\btheta^t}) K(r-s) \indic{r > s}$. We can then write
\begin{align*}
   {F}_5 = &  \int_0^T \int_t^T \EE \Bigg[  \mall{s}{\phi''(\Flowx)} \left( \int_t^T \p_{x} b (r,\vol_r^{t,\btheta^t}) \Sigma K(r-t) dr \right)  \p_{x} \sigma (s,\vol_s^{t,\btheta^t}) \bigg] \Delta K(s-t) ds  dt \\
  & + \rho \int_0^T \int_t^T \EE \Bigg[  \phi''(\Flowx) \left( \int_s^T \p_{xx}^2 b (r,\vol_r^{t,\btheta^t}) K(r-s) \Sigma K(r-t) dr \right) \p_{x} \sigma (s,\Flowv) \bigg]  \Delta K(s-t) ds dt.
\end{align*}
We may apply triangle inequality, Hölder inequality with $p,q,r \geq 1$ such that $\frac{1}{p} + \frac{1}{q} + \frac{1}{r} = 1$ and Minkoswki integral inequality to derive \small{
\begin{align*}
& | {F}_5|\\ 
&\leq  \int_0^T \int_t^T \| \mall{s}{\phi''(\Flowx)} \|_{L^p(\Omega)}  \left( \int_t^T \| \p_{x} b (r,\vol_r^{t,\btheta^t}) \|_{L^q(\Omega)} \Sigma K(r-t) dr \right) \| \p_{x} \sigma (s,\Flowv) \|_{L^r(\Omega)} |\Delta K| (s-t) ds  dt \\
& \ \  + \rho \int_0^T\!\!\!\int_t^T \bigg\{ \| \phi''(\Flowx) \|_{L^p(\Omega)} \left( \int_s^T \| \p_{xx}^2 b (r,\vol_r^{t,\btheta^t}) \|_{L^q(\Omega)} K(r-s) \Sigma K(r-t) dr \right)\| \p_{x} \sigma (s,\Flowv)\|_{L^r(\Omega)} | \Delta K| (s-t) \bigg\} ds dt.
\end{align*}}
Then using \ref{hyp:regularity}, Lemma \ref{lm:controle_des_moments}, \eqref{eq:estimate_mall_phi} and \eqref{eq:control_sigma_K_L1}, there exists a constant $C_{ {F}_5} >0$ not depending on $K$ such that
\begin{align*}
  | {F}_5| \leqs & (1+ e^{C_{ {F}_5} \|K\|_{L^2([0,T])}^2 }) \int_0^T \int_t^T | \Delta K| (s-t) \bigg(1+ \rho \|K\|_{L^2([0,T])} + \rho \int_s^T K(r-s) \Sigma K(r-t) dr \bigg) ds dt.
\end{align*}
We know from \ref{hyp:kernel} that $K(r-s) \Sigma K(r-t) \leq (1+c_{K,\ak}) K(r-s) K(r-t) \leq (1+c_{K,\ak}) K^2(r-s)$ since $K$ is non-increasing and $s > t$. We can then write
\begin{align*}
  \int_s^T K(r-s) \Sigma K(r-t) dr \leq & (1+c_{K,\ak}) \int_s^T K^2(r-s) dr \leq (1+c_{K,\ak}) \int_0^T K^2(r) dr = (1+c_{K,\ak}) \|K\|_{L^2([0,T])}^2.
\end{align*}
Using again the inequality $\rho \leq \indic{\rho > 0}$, we obtain
\begin{align*}
  | {F}_5| \leqs & (1+ e^{C_{ {F}_5} \|K\|_{L^2([0,T])}^2 }) (1+ \indic{\rho > 0} (\|K\|_{L^2([0,T])} + \|K\|_{L^2([0,T])}^2)) \int_0^T \| \Delta K \|_{L^1([0,t])} dt.
\end{align*}

\subsection{Contributions in $\|K^2 - \ak^2 \|_{L^1([0,T])}$}

\subsubsection*{Estimate of $ {H}_1$}
The expression $  {H}_1 = \int_0^T \EE \Bigg[ \phi''(\Flowx) \left( \int_t^T \p_{x}\sigma (s,\Flowv) \Sigma K(s-t) dB_s \right) \left( \int_t^T \p_{x}\sigma(s,\Flowv) \Delta K(s-t) dB_s \right) \Bigg] dt$ is the trickiest to handle since it contains the product of two stochastic integrals, which are themselves multiplied by the unsuitable stochastic process $\phi''(X_T^{t,\bitgr_t,\btheta^t})$, which prohibits the use of Itô's isometry.

With the use of the Malliavin integration by part, 
\begin{align*}
   {H}_1 = & \int_0^T \int_t^T \EE \Bigg[  \mallext{s}{\phi''(\Flowx) \int_t^T \p_{x} \sigma (r,\vol_r^{t,\btheta^t}) \Sigma K(r-t) dB_r} \p_{x} \sigma (s,\Flowv) \Bigg] \Delta K(s-t) ds dt.
\end{align*}
Then,  the product rule and the identity
\begin{align*}
 &\mallext{s}{\int_t^T \p_{x} \sigma (r,\vol_r^{t,\btheta^t}) \Sigma K(r-t) dB_r} \\
 & = \p_{x} \sigma (s,V_s^{t,\btheta^t}) \Sigma K(s-t) + \int_s^T \mall{s}{\p_{x} \sigma (r,\vol_r^{t,\btheta^t})} \Sigma K(r-t) dB_r \\
 & = \p_{x} \sigma (s,V_s^{t,\btheta^t}) \Sigma K(s-t) + \rho  \int_s^T \p_{xx}^2  \sigma (r,\vol_r^{t,\btheta^t}) K(r-s) \Sigma K(r-t) dB_r 
\end{align*}
decompose $ {H}_1$ into the three following parts:
\begin{align*}
   {H}_1 
  = & \int_0^T \int_t^T \EE \Bigg[  \mall{s}{\phi''(\Flowx)} \p_{x} \sigma (s,\vol_s^{t,\btheta^t}) \int_t^T \p_{x} \sigma (r,\vol_r^{t,\btheta^t}) \Sigma K(r-t) dB_r  \Bigg] \Delta K(s-t) ds dt \\
  & + \rho \int_0^T \int_t^T \EE \Bigg[ \phi''(\Flowx) \p_{x} \sigma (s,\vol_s^{t,\btheta^t}) \int_s^T \p_{xx}^2  \sigma (r,\vol_r^{t,\btheta^t}) K(r-s) \Sigma K(r-t) dB_r \Bigg] \Delta K(s-t) ds dt \\
  & + \int_0^T \int_t^T \EE \big[ \phi''(\Flowx) (\p_{x} \sigma (s,\vol_s^{t,\btheta^t}))^2 \big]  \Delta (K^2)(s-t) ds dt. 
\end{align*}

Once again, we may treat separately the cases $\rho > 0$ and $\rho = 0$ to optimise the constant in the latter. When $\rho > 0$, we apply Hölder, BDG and Minkoswki's integral inequalities to the first two terms. For the third term, we use the Cauchy-Schwarz inequality. Then, we make use of \ref{hyp:regularity} and Lemma \ref{lm:controle_des_moments} as well as \eqref{eq:estimate_mall_phi} to bound the Malliavin derivatives and the moments of the stochastic terms. With this we obtain that there exists $C_{ {H}_1}' >0$, not depending on $K$, such that
\begin{align*}
| {H}_1| 
\leqs & (1+ e^{C_{ {H}_1}' \|K\|_{L^2([0,T])}^2 }) (1+ \rho \|K\|_{L^2([0,T])}) \left( \int_t^T (\Sigma K (r-t))^2 dr \right) ^\frac{1}{2}
  \int_0^T \int_t^T | \Delta K| (s-t) ds dt \\ 
& + \rho  (1+ e^{C_{ {H}_1}' \|K\|_{L^2([0,T])}^2 }) \int_0^T \int_t^T  | \Delta K| (s-t) \bigg( \int_s^T K^2(r-s) (\Sigma K(r-t))^2 dr  \bigg)^{\frac{1}{2}} ds dt  \\
& + (1+ e^{C_{ {H}_1} \|K\|_{L^2([0,T])}^2 }) \int_0^T \int_t^T  | \Delta (K^2)| (s-t) ds dt .
\end{align*}
It is clear that $\left( \int_t^T (\Sigma K (r-t))^2 dr \right) ^\frac{1}{2} \leqs \|K\|_{L^2([0,T])}$.
To bound the integral $\int_s^T K^2(r-s) (\Sigma K(r-t))^2 dr$, we use \ref{hyp:kernel}:
\begin{align*}
\int_s^T K^2(r-s) (\Sigma K(r-t))^2 dr \leq & (1+c_{K,\ak}) \int_s^T K^2(r-s) K^2(r-t) dr \\
  &\leq (1+c_{K,\ak}) K^2(s-t) \int_s^T K^2(r-s) d r \leqs K^2(s-t) \|K\|_{L^2([0,T])}^2.
\end{align*}
In particular, it follows that 
\begin{equation*}
\left( \int_s^T K^2(r-s) (\Sigma K(r-t))^2 dr \right)^{\frac{1}{2}} 
\leqs \|K\|_{L^2([0,T])} \Sigma K(s-t), 
\end{equation*}
hence 
\begin{equation*}
|\Delta K| (s-t) \left( \int_s^T K^2(r-s) (\Sigma K(r-t))^2 dr \right)^{\frac{1}{2}} \leqs \|K\|_{L^2([0,T])} |\Delta (K^2)| (s-t). 
\end{equation*}
This leads, when $\rho > 0$, to the estimate
\begin{align*}
  | {H}_1| \leqs & (1+ e^{C_{ {H}_1}' \|K\|_{L^2([0,T])}^2 }) (\|K\|_{L^2([0,T])}^2 + \rho \|K\|^2_{L^2([0,T])})
  \int_0^T \|\Delta K\|_{L^1([0,t])} dt \\ 
& + (1+\rho)  (1+ e^{C_{ {H}_1}' \|K\|_{L^2([0,T])}^2 }) \int_0^T \int_t^T | \Delta (K^2)| (s-t) ds dt.
\end{align*}

When $\rho = 0$, from \eqref{eq:malliavin_derivative_phi}, we have $\mall{s}{\phi''(\Flowx)} = \phi'''(\Flowx) \sigma(s,\Flowv)$ for all $s \in [t,T]$, and then 
\begin{align*}
   {H}_1 = & \int_0^T \int_t^T \EE \Bigg[ \phi'''(\Flowx) \sigma(s,\Flowv) \p_{x} \sigma (s,\vol_s^{t,\btheta^t}) \int_t^T \p_{x} \sigma (r,\vol_r^{t,\btheta^t}) \Sigma K(r-t) dB_r \Bigg] \Delta K(s-t) ds  dt \\
  & + \int_0^T \int_t^T \EE \big[ \phi''(\Flowx) (\p_{x} \sigma (s,\vol_s^{t,\btheta^t}))^2 \big]  \Delta (K^2)(s-t) ds dt.
\end{align*}
Using that $\phi \in C^4(\RR)$\footnote{This is the only place in the proof where we need $\phi$ to be actually smoother than three time differentiable.} and that $\mall{r}{\sigma(s,\Flowv)} = \mall{r}{\p_{x} \sigma (s,\Flowv)} = 0$ for any $r \in [t,T]$, we can apply the product rule and the Malliavin integration by part to get
\begin{align*}
   {H}_1 = & \int_0^T \!\!\! \int_t^T \!\!\! \int_t^T \!\! \EE \Bigg[ \phi''''(\Flowx) \sigma(s,\Flowv) \p_{x} \sigma (s,\vol_s^{t,\btheta^t}) \sigma(r,\vol_r^{t,\btheta^t}) \p_{x} \sigma (r,\vol_r^{t,\btheta^t}) \Sigma K(r-t) dr  \Bigg] \Delta K(s-t) ds  dt \\
  & + \int_0^T \int_t^T \EE \big[  \phi''(\Flowx) (\p_{x} \sigma (s,\vol_s^{t,\btheta^t}))^2  \big]  \Delta (K^2)(s-t) ds dt.
\end{align*}
Using a $5$-terms Hölder inequality for the first expression in the RHS above and a Cauchy-Schwarz inequality for the second one, we may bound the expectation of the random terms using \ref{hyp:regularity} associated with Lemma \ref{lm:controle_des_moments} and leverage the estimate \eqref{eq:control_sigma_K_L1} to derive that when $\rho = 0$, there exists $C_{ {H}_1}'' >0$ not depending on $K$ such that
\begin{align*}
  | {H}_1| \leqs & (1+ e^{C_{ {H}_1}'' \|K\|_{L^2([0,T])}^2 }) \int_0^T \int_t^T \Big( | \Delta K| (s-t)  + | \Delta (K^2)| (s-t) \Big) ds dt.
\end{align*}
Letting $C_{ {H}_1} = \max({C_{ {H}_1}',C_{ {H}_1}''})$, we deduce that for every $\rho \in [0,1]$:
\begin{align*}
  | {H}_1| \leqs & (1+ e^{C_{ {H}_1} \|K\|_{L^2([0,T])}^2 }) ( 1 + \indic{\rho > 0} (\|K\|_{L^2([0,T])} + \|K\|_{L^2([0,T])}^2) ) \int_0^T \int_t^T | \Delta K | (s-t)  ds dt \\
  & + (1+ e^{C_{ {H}_1} \|K\|^2_{L^2([0,T])} }) \int_0^T \int_t^T | \Delta (K^2)| (s-t)  ds dt.
\end{align*}

\subsubsection*{Estimate of $ {H}_2$ and $ {H}_3$}
The estimates of $ {H}_2 = \int_0^T \int_t^T \EE \Big[ \phi'(\Flowx) b''(s,\Flowv) \Big] \Delta (K^2)(s-t)  ds dt$ \\ and $ {H}_3 = \int_0^T \int_t^T \EE \Big[ \phi''(\Flowx) (\sigma \sigma'')(s,\Flowv) \Delta (K^2)(s-t) ds \Big] dt$ are straightforward, so we treat them together. We apply Cauchy-Schwarz inequality, make use of \ref{hyp:regularity} and Lemma \ref{lm:controle_des_moments}, and obtain the existence of constants $C_{ {H}_2} >0$ and $C_{ {H}_3} >0$, not depending on $K$, such that
\begin{align*}
  | {H}_2| \leqs (1+ e^{C_{ {H}_2} \|K\|_{L^2([0,T])}^2 }) & \int_0^T \int_t^T | \Delta (K^2)| (s-t) ds dt, \\
  | {H}_3| \leqs (1+ e^{C_{ {H}_3} \|K\|_{L^2([0,T])}^2 }) & \int_0^T \int_t^T | \Delta (K^2)| (s-t) ds dt.
\end{align*}

\subsubsection*{Estimate of $ {H}_4$ and $ {H}_5$}
The analysis of ${H}_4 = \int_0^T \int_t^T \EE [ \phi''(\Flowx) \p_{xx} \sigma(s,\Flowv) ( \int_s^T \p_x b(\ell,V_{\ell}^{t,\btheta^t}) K(\ell-s) d\ell) \Delta (K^2)(s-t) ds] dt$ and ${H}_5 = \int_0^T \int_t^T \EE [ \phi''(\Flowx) \newline \p_{xx}\sigma(s,\Flowv) ( \int_s^T \p_x \sigma(\ell,V_{\ell}^{t,\btheta^t}) K(\ell-s) dB_{\ell}) \Delta (K^2)(s-t) ds ] dt$ also just consists in finding a direct upper-bound for the expectation of the stochastic terms, although nothing can be done to remove the square in the difference $\Delta (K^2)$ of the two kernels.

We apply Hölder inequality, Minkoswki's integral inequality for $ {H}_4$ and BDG inequality for $ {H}_5$, use \ref{hyp:regularity} as well as Lemma \ref{lm:controle_des_moments} and the estimates $\int_s^T K(\ell-s) d\ell \leq \|K\|_{L^1([0,T])} \leqs \frac{T^{H+\frac{1}{2}}}{H+\frac{1}{2}} \leq T\vee 1$ and $\int_s^T K^2(\ell-s) d\ell \leq \|K\|_{L^2([0,T])}^2$ to derive:
\begin{align*}
| {H}_4| 
\leq & \int_0^T \int_t^T \mathbb{E} [\phi'' (X_T^{t,\overline{X}_T,\overline{\Theta}^t})^4]^\frac{1}{4} 
\EE[\p_{xx} \sigma (s, V_s^{t,  \overline{\Theta}^t})^2]^{\frac{1}{2}} 
\left( \int_s^T \EE[ \p_{x} b(\ell,V_{\ell}^{t,\btheta^t})^4]^\frac{1}{4} K(\ell-s) d\ell \right) 
|\Delta (K^2)| (s-t) ds dt \\
& \leqs (1 + e^{C_{ {H}_4} \| K \|_{L^2([0,T])}^2 })\int_0^T\int_t^T |\Delta (K^2)| (s-t) ds dt,
\end{align*}
and 
\small{
\begin{align*}  
| {H}_5| 
\leq & \ c_4^{\frac{1}{4}} \int_0^T \int_t^T \mathbb{E} [\phi'' (X_T^{t,\overline{X}_T,\overline{\Theta}^t})^4]^\frac{1}{4} 
\EE[ \p_{xx} \sigma (s, V_s^{t,\overline{\Theta}^t})^2]^{\frac{1}{2}}  
\left( \int_s^T \mathbb{E} [ \p_x \sigma(\ell,V_{\ell}^{t, \overline{\Theta}^t})^4]^{\frac{1}{2}} K^2 (\ell - s) d \ell \right)^{\frac{1}{2}}|    \Delta (K^2)|  (s - t)  d s d t \\
& \leqs \|K\|_{L^2([0,T])} (1 + e^{C_{ {H}_5} \| K \|_{L^2([0,T])}^2 }) \int_0^T \int_t^T |
\Delta (K^2) | (s - t)  d s d t,
\end{align*}
}
for some $C_{ {H}_4},C_{ {H}_5} > 0$ not depending on $K$.

\subsubsection*{Final upper-bound for the error}

Revisiting the decomposition \eqref{eq:error_decompos} of the error $\Ee_T$, and incorporating the previous estimations, we conclude that, for every $\rho \in [0,1]$,
\begin{equation*}
|\mathcal{E}_T| \leqs (1 + e^{C \|K\|_{L^2([0,T])}^2}) (1 + \indic{\rho > 0} (\|K\|_{L^2([0,T])} +  \|K\|_{L^2([0,T])}^2)) \int_0^T \big(\|\Delta K \|_{L^1([0,T])} + \| \Delta (K^2) \|_{L^1([0,T])}\big) dt,
\end{equation*}
where $\displaystyle{C \coloneq \max_{1\leq i\leq 5,\ 1\leq j\leq 5}(C_{ {F}_i}, C_{ {H}_j})}$, which ends the proof of Theorem \ref{thm:weakcvg_integrated}.

\subsection*{Acknowledgements}
This work benefited from stimulating discussions with J\'{e}r\'{e}mie Bec and Simon Thalabard who are warmly acknowledged. \newline M.B. and P.M. acknowledge the support of the French Agence Nationale de la Recherche (ANR), under grant ANR-21-CE30-0040-01 (NETFLEX). K.M. acknowledges the support of ANID FONDECYT/POSTDOCTORADO N${}^\circ$ 3210111 and the Centro de Modelamiento Matemático (CMM) BASAL fund FB210005 for Center of Excellence from ANID-Chile.

\appendix
\section{Appendix} \label{sec:appendix}
\subsection{Hadamard derivatives in $L^p(\Omega)$, chain rule and product rule}\label{sec:appendix_hadam}

To introduce the notion of differentiability in Hadamard's sense and the associated chain rule, we adopt the formalism presented in \citet{shapiro1990}.

\begin{definition}
Let $(X,d_X)$ and $(Y,d_Y)$ be two metric spaces and $\func{f}{X}{Y}$.  $f$ is called Hadamard-differentiable at $x \in X$ in the direction $h \in X$ if,  for any sequence $(h_n)_{n \in \NN}$ of elements of $X$ and $(t_n)_{n \in \NN}$ of elements of $\RR_+$ such that $h_n \xrightarrow{} h$ and $t_n \xrightarrow{} 0$, the limit of the sequence
\begin{equation*}
\left(\frac{f(x + t_n h_n) - f(x) }{t_n}\right)_{n \in \NN}
\end{equation*}
exists in $Y$. We denote this limit by $Df(x,h)$ and call it the Hadamard derivative of $f$ at $x$ in the direction $h$.
\end{definition}

When $f$ is Hadamard-differentiable at $x$ in the direction $h$, then $f$ is also G\^{a}teaux-differentiable at $x$ in the direction $h$ and the G\^{a}teaux derivative is equal to the Hadamard derivative. The chain rule for Hadamard derivatives is given by the following proposition (the proof can be found in \citet{shapiro1990}, Proposition 3.6).
\begin{proposition} \label{prop:chain_rule}
Let $X,Y,Z$ be metric spaces, $\func{f}{X}{Y}$ and $\func{g}{Y}{Z}$. If $f$ is G\^{a}teaux-differentiable at $x \in X$ in the direction $h \in X$ and $g$ is Hadamard-differentiable at $f(x) \in Y$ in the direction $Df(x,h) \in Y$, then $g \circ f$ is G\^{a}teaux-differentiable at $x$ in the direction $h$ and 
\begin{align*}
D (g \circ f)(x,h) = Dg(f(x),Df(x,h)).
\end{align*}
\end{proposition}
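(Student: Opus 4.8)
The plan is to reduce everything to the defining feature of Hadamard differentiability: it permits the perturbation direction to depend on the scaling parameter, which is exactly what absorbs the error made when replacing $f(x + t_n h)$ by its linearization. Fix an arbitrary sequence $(t_n)_{n\in\NN}$ of elements of $\RR_+$ with $t_n \to 0$ and $t_n \neq 0$; it suffices to show that the quotients $\tfrac{1}{t_n}\big( (g\circ f)(x + t_n h) - (g\circ f)(x)\big)$ converge in $Z$ to $Dg(f(x),Df(x,h))$, since then the limit defining the Gâteaux derivative of $g \circ f$ at $x$ in the direction $h$ exists and equals the announced value.

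First I would set $y_0 := f(x)$, $k_0 := Df(x,h)$, and introduce the auxiliary sequence
\[
k_n := \frac{f(x + t_n h) - f(x)}{t_n}, \qquad n \in \NN.
\]
By the assumed Gâteaux differentiability of $f$ at $x$ in the direction $h$, one has $k_n \to k_0$ in $Y$ as $n \to \infty$. The key algebraic identity is then $f(x + t_n h) = y_0 + t_n k_n$, which yields
\[
\frac{(g\circ f)(x + t_n h) - (g\circ f)(x)}{t_n} = \frac{g(y_0 + t_n k_n) - g(y_0)}{t_n}.
\]
Now I would invoke the Hadamard differentiability of $g$ at $y_0$ in the direction $k_0$: applied to the sequence of directions $(k_n)_{n\in\NN}$, which converges to $k_0$, and the sequence of scalars $(t_n)_{n\in\NN}$, which lies in $\RR_+$ and tends to $0$, it gives exactly
\[
\frac{g(y_0 + t_n k_n) - g(y_0)}{t_n} \xrightarrow[n\to\infty]{} Dg(y_0,k_0)
\]
in $Z$. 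Combining the last two displays shows that the difference quotients of $g \circ f$ along $(t_n)$ converge to $Dg(f(x),Df(x,h))$. As $(t_n)$ was an arbitrary sequence in $\RR_+$ tending to $0$, this is precisely Gâteaux differentiability of $g \circ f$ at $x$ in the direction $h$ with $D(g\circ f)(x,h) = Dg(f(x),Df(x,h))$, as claimed.

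There is no serious obstacle here: the argument is a two-line manipulation once the right auxiliary sequence $k_n$ is chosen. The only points requiring a little care are (i) restricting to $t_n \neq 0$ so that the difference quotients are well defined, which is harmless since the Gâteaux and Hadamard limits are taken along such sequences anyway, and (ii) noting that the computation presupposes enough linear and metric structure on $X, Y, Z$ for the expressions $x + t_n h$ and $(f(x+t_n h)-f(x))/t_n$ to make sense — in the applications of this paper these are (subsets of) normed spaces such as $\Xx_t$ and $L^p(\Omega)$, so this is automatic. It is worth emphasizing in the write-up why mere Gâteaux differentiability of $g$ would not suffice: it would only control $\tfrac{1}{t_n}\big(g(y_0 + t_n k_0) - g(y_0)\big)$ with the fixed direction $k_0$, whereas here the effective direction $k_n$ genuinely varies with $n$, and it is exactly the Hadamard property that handles this variation — this is the whole point of using Hadamard rather than Fréchet or Gâteaux derivatives in the present paper.
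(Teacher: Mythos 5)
Your proof is correct and is precisely the standard argument: the paper itself does not reprove this statement but defers to \citet[Proposition 3.6]{shapiro1990}, whose proof is exactly your reduction via the auxiliary directions $k_n = (f(x+t_nh)-f(x))/t_n \to Df(x,h)$ absorbed by the Hadamard property of $g$. Your closing remarks on why Gâteaux differentiability of $g$ alone would not suffice, and on the implicit linear structure needed for the difference quotients, are both accurate.
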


The following lemma gives a sufficient condition for the Hadamard differentiability of the extension of a smooth function $\func{f}{\RR}{\RR}$ to the appropriate probabilistic spaces in our specific context of application.
\begin{lemma} \label{lm:hadamard_differentiability}
Let $f \in C^2(\RR,\RR)$, and assume that, for $g \in \{ f, f', f'' \}$, there exists a real $p \geq 1$ such that $|g(x)| \leqs 1 + |x|^p$ for any $x \in \RR$. Fix $r \geq 1$. For $q\geq 2 p r \vee 4 r$, we consider the application $\func{F}{L^q(\Omega)}{L^r(\Omega)}$ defined by $F(X)(\omega) = f(X(\omega))$ for $X \in L^q(\Omega)$ and $\omega \in \Omega$. 
Then $F$ is a well-defined Hadamard-differentiable application between the Banach spaces $(L^q(\Omega),\| \cdot\|_{q})$ and $(L^r(\Omega),\|\cdot\|_{r})$, and the Hadamard derivative of $F$ at $X \in L^q(\Omega)$ in the direction $H \in L^q(\Omega)$ is given by $F'(X,H) = f'(X) H$. 
\end{lemma}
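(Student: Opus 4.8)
The plan is to establish the three assertions in order: that $F$ maps $L^q(\Omega)$ into $L^r(\Omega)$, that the candidate $F'(X,H)=f'(X)H$ again lies in $L^r(\Omega)$, and that it is the Hadamard derivative of $F$ at $X$ in the direction $H$. For the first two points I would only need the polynomial bounds on $f,f',f''$ together with Hölder's inequality: from $|f(x)|\leqs 1+|x|^p$ one gets $\EE[|f(X)|^r]\leqs 1+\EE[|X|^{pr}]<\infty$ since $pr\le q$, and from $|f'(x)|\leqs 1+|x|^p$ together with Cauchy--Schwarz one gets $\EE[|f'(X)H|^r]\leqs \EE[(1+|X|^{pr})^2]^{1/2}\EE[|H|^{2r}]^{1/2}<\infty$, using $2pr\le q$ and $2r\le q$ (the latter from $q\ge 4r$).

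For the Hadamard differentiability, fix $X,H\in L^q(\Omega)$, a sequence $(H_n)$ with $H_n\to H$ in $L^q(\Omega)$, and positive reals $t_n\to 0$. Taylor's formula with integral remainder gives
\[
G_n:=\frac{F(X+t_nH_n)-F(X)}{t_n}-f'(X)H= H_n\int_0^1\big(f'(X+\lambda t_nH_n)-f'(X)\big)\,d\lambda+(H_n-H)\,f'(X),
\]
and I would prove $\| G_n \|_{L^r(\Omega)}\to 0$ via the subsequence principle: it is enough to show that every subsequence of $(G_n)$ admits a further subsequence converging to $0$ in $L^r(\Omega)$. Along a subsequence (not relabelled) one has $H_n\to H$ almost surely and $|H_n|\le G$ almost surely for a fixed $G\in L^q(\Omega)$ (a standard consequence of the Riesz--Fischer theorem), and one may also assume $t_n\le 1$. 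Then, pointwise in $\omega$, the continuity of $f'$ and dominated convergence in the variable $\lambda$ — the integrand being dominated by $C(1+|X(\omega)|^p+G(\omega)^p)$ uniformly in $\lambda$ — yield $\int_0^1 f'(X+\lambda t_nH_n)\,d\lambda\to f'(X)$ almost surely, hence $G_n\to 0$ almost surely. The same bounds furnish the majorant $|G_n|\leqs G\,(1+|X|^p+G^p)+|f'(X)||H|$, and the crux is to check that this majorant lies in $L^r(\Omega)$: expanding $\big(G(1+|X|^p+G^p)\big)^r$ produces the terms $G^r$, $G^r|X|^{pr}$ and $G^{r(p+1)}$, which are integrable because $r\le q$, because $2r\vee 2pr\le q$ (after Cauchy--Schwarz), and because $r(p+1)\le 2pr\le q$ — this last inequality using precisely the hypothesis $p\ge 1$. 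The cross term $|f'(X)||H|$ is handled by Cauchy--Schwarz as above. Dominated convergence then gives $\EE[|G_n|^r]\to 0$ along the subsequence, and the subsequence principle closes the argument. Linearity of $H\mapsto f'(X)H$ is immediate, and continuity of the direction map is automatic from Hadamard differentiability (see \citet[Proposition 3.1]{shapiro1990}).

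The genuinely delicate point is the reduction to an almost surely convergent, pathwise dominated subsequence of $(H_n)$: convergence in $L^q(\Omega)$ does not by itself control $H_n$ pathwise, so without this reduction one cannot pass to the limit inside the expectation by dominated convergence. Everything else is a routine Taylor expansion together with Hölder-type bookkeeping, and the precise role of the hypothesis $q\ge 2pr\vee 4r$ (with $p\ge 1$) is exactly to make the dominating function $r$-integrable.
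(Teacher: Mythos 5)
Your proof is correct, but it takes a genuinely different route from the paper's. The paper uses a \emph{second-order} Taylor expansion, writing the increment as $f'(X)(H_n-H)+t_n H_n^2\int_0^1(1-\lambda)f''(X+\lambda t_nH_n)\,d\lambda$, and then bounds everything quantitatively via Cauchy--Schwarz and Minkowski: the remainder is controlled by $t_n\|H_n\|_{4r}^2\sup_\lambda\|f''(X+\lambda t_nH_n)\|_{2r}$, which is where the exponents $4r$ and $2pr$ enter and where the hypothesis $f\in C^2$ with polynomial growth of $f''$ is actually used. You instead stop at a \emph{first-order} integral remainder and argue qualitatively by the subsequence principle: extract an a.s.\ convergent, $L^q$-dominated subsequence of $(H_n)$, pass to the limit pointwise using only the continuity of $f'$, and close with dominated convergence after checking that the majorant is $r$-integrable (your exponent bookkeeping there, including the use of $p\ge 1$ for the term $G^{r(p+1)}$, is correct; the only slip is that the majorant of the cross term should be $|f'(X)|(G+|H|)$ rather than $|f'(X)||H|$, which changes nothing in the integrability check). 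The trade-off is clear: your argument needs only $f\in C^1$ with polynomial growth of $f'$ and correctly identifies the pathwise domination step as the crux, whereas the paper's argument uses the full $C^2$ hypothesis but yields an explicit quantitative rate of order $t_n+\|H_n-H\|_{2r}$ for the difference quotient.
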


\begin{proof}
Firstly, we note that the application $F$ is well-defined since for $X \in L^q(\Omega)$, we have $\EE[|F(X)|^r] = \EE[|f(X)|^r] \leqs 1 + \EE[|X|^{pr}] < +\infty$, as $p r \leq q$. To show the Hadamard-differentiability of $F$ at $X \in L^q(\Omega)$ in the direction $H \in L^q(\Omega)$, we consider a sequence $(H_n)_{n \in \NN}$ of elements of $L^q(\Omega)$ and $(t_n)_{n \in \NN}$ of elements of $\RR_+$ such that $H_n \xrightarrow{} H$ in $L^q$ and $t_n \xrightarrow{} 0$. We  employ a second order Taylor expansion of $f$ to obtain
\begin{equation*}
\frac{f(X + t_n H_n) - f(X)}{t_n} -  f'(X) H  = f'(X)(H_n - H) + \frac{1}{t_n} (t_n H_n)^2 \int_0^1 (1-\lambda) f''(X + \lambda t_n H_n) \  d \lambda.
\end{equation*}
Introducing $f'(X) H_n$ as a pivot term, by the Cauchy-Schwarz inequality and Minkowski's integral inequality, we deduce that
\begin{equation*}
\Bigg\| \frac{f(X + t_n H_n) - f(X)}{t_n} -  f'(X) H_n \Bigg\|_r \leq  t_n \| H_n \|_{4r}^2 \int_0^1 (1-\lambda) \| f''(X + \lambda t_n H_n) \|_{2r} \ d \lambda.
\end{equation*}
The polynomial growth hypothesis on $f''$ then leads to 
\begin{equation*}
\| f''(X + \lambda t_n H_n) \|_{2r} \leq \| 1 + |X + \lambda t_n H_n|^p \|_{2r}  \leq 1 + \| X + \lambda t_n H_n \|_{2 p r}^p  \leq 1 + (\| X \|_{2pr} + \lambda t_n \| H_n \|_{2pr})^p.
\end{equation*}
Since $(H_n)_{n \in \NN}$ converges to $H$ in $L^q(\Omega)$ with $q\geq 2pr \vee 4r$, it also converges in $L^{2pr}(\Omega)$, and the polynomial growth hypothesis on $f'$ ensures that $f'(X) \in L^r(\Omega)$ and then that $\|f'(X)(H-H_n)\|_r$ vanishes.  
Moreover the sequence $(H_n)_{n \in \NN}$ is bounded in $L^{2pr}(\Omega)$ and in $L^{4r}(\Omega)$, and we have $X$ in $L^{2pr}(\Omega)$, allowing to conclude that the sequence $(f(X + t_n H_n) - f(X))/t_n$ converges in $L^r(\Omega)$ to $f'(X) H$. This shows that $f$ is Hadamard-differentiable at $X$ in the direction $H$, and the Hadamard derivative is equal to $f'(X) H$.
\end{proof}

The product rule for two $L^2(\Omega)$-valued functions of a metric space is formalized as follows. 
\begin{lemma} \label{lm:product_rule}
Let (X,d) be a metric space. If two functions $\func{f, g}{X}{L^2(\Omega)}$ are G\^{a}teaux-differentiable at $x \in X$ in the direction $h \in X$, then their product $\func{fg}{X}{L^1(\Omega)}$ is G\^{a}teaux-differentiable for the norm $\| \cdot \|_1$ at $x$ in the direction $h$ and the G\^{a}teaux derivative is given by $D(f g)(x,h) = f(x)Dg(x,h) + g(x)Df(x,h)$.
\end{lemma}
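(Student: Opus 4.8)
The plan is to reduce the statement to the elementary fact that a product of two sequences converging in $L^2(\Omega)$ converges in $L^1(\Omega)$, via the Cauchy--Schwarz inequality. First I would note that $fg$ indeed maps into $L^1(\Omega)$: for $y \in X$, $\| f(y) g(y) \|_1 \le \| f(y) \|_2 \| g(y) \|_2 < +\infty$. Then I would fix an arbitrary sequence $(t_n)_{n \in \NN}$ of nonzero reals with $t_n \to 0$ and use the pointwise-on-$\Omega$ difference-quotient identity
\begin{equation*}
\frac{(fg)(x+t_n h) - (fg)(x)}{t_n} = \frac{f(x+t_n h) - f(x)}{t_n}\, g(x+t_n h) + f(x)\, \frac{g(x+t_n h) - g(x)}{t_n}.
\end{equation*}

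Next I would extract from the Gâteaux-differentiability hypothesis the two convergences $\frac{f(x+t_n h) - f(x)}{t_n} \to Df(x,h)$ and $\frac{g(x+t_n h) - g(x)}{t_n} \to Dg(x,h)$ in $L^2(\Omega)$, and deduce the auxiliary convergence $g(x + t_n h) \to g(x)$ in $L^2(\Omega)$: indeed $g(x+t_n h) = g(x) + t_n \big( \frac{g(x+t_n h) - g(x)}{t_n} \big)$, where the bracketed term is bounded in $L^2(\Omega)$ (being convergent) and $t_n \to 0$. In particular $\sup_n \| g(x+t_n h) \|_2 < +\infty$.

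Then I would pass to the limit term by term. For the first term, setting $a_n = \frac{f(x+t_n h) - f(x)}{t_n}$, $a = Df(x,h)$, $b_n = g(x+t_n h)$, $b = g(x)$, the Cauchy--Schwarz inequality gives $\| a_n b_n - a b \|_1 \le \| a_n - a \|_2 \, \| b_n \|_2 + \| a \|_2 \, \| b_n - b \|_2$, which tends to $0$ since $\| b_n \|_2$ is bounded and $a_n \to a$, $b_n \to b$ in $L^2(\Omega)$; hence this term converges to $g(x) Df(x,h)$ in $L^1(\Omega)$. The second term is even simpler, the factor $f(x)$ being fixed: $\big\| f(x) \frac{g(x+t_n h) - g(x)}{t_n} - f(x) Dg(x,h) \big\|_1 \le \| f(x) \|_2 \, \big\| \frac{g(x+t_n h) - g(x)}{t_n} - Dg(x,h) \big\|_2 \to 0$, so it converges to $f(x) Dg(x,h)$ in $L^1(\Omega)$. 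Adding the two limits, and since $(t_n)$ was arbitrary, this shows $fg$ is Gâteaux-differentiable at $x$ in the direction $h$ with $D(fg)(x,h) = f(x) Dg(x,h) + g(x) Df(x,h)$. There is no genuine obstacle; the only point requiring care is that it is boundedness of $\| g(x+t_n h) \|_2$ (a consequence of convergence), rather than any continuity assumption, that makes the Cauchy--Schwarz estimate close.
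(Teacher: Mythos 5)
Your proposal is correct and follows essentially the same route as the paper: the same splitting of the difference quotient into $f(x)\frac{g(x+t_nh)-g(x)}{t_n} + g(x+t_nh)\frac{f(x+t_nh)-f(x)}{t_n}$ followed by Cauchy--Schwarz. The paper's proof is terser (it leaves the limiting argument implicit), whereas you correctly spell out the needed auxiliary facts that $g(x+t_nh)\to g(x)$ in $L^2(\Omega)$ and that $\|g(x+t_nh)\|_2$ is bounded, both consequences of the Gâteaux-differentiability of $g$.
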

\begin{proof}
Let $x,h \in X$ and $\varepsilon > 0$. Writing 
\begin{equation*}
\frac{f(x + \varepsilon h) g(x + \varepsilon h) - f(x)g(x)}{\varepsilon} = f(x) \frac{g(x + \varepsilon h) - g(x)}{\varepsilon} + g(x + \varepsilon h) \frac{f(x + \varepsilon h) - f(x)}{\varepsilon}, 
\end{equation*}
we conclude by the Cauchy-Schwarz inequality and letting $\varepsilon$ goes to zero.
\end{proof}

\subsection{Proof of Lemma \ref{lm:controle_des_moments} (on moment bounds)} \label{sec:proof_controle_des_moments}

We only prove the last two inequalities in Lemma \ref{lm:controle_des_moments} since the two first ones are obtained with similar arguments: we start by proving exponential moments on $V$. Let $p \geq 1$, $t \in [0,T]$ and $s \in [t,T]$. The random variable $\vol_s^{t,\btheta^t}$ follows a centred Gaussian distribution with variance $\int_0^t \ak^2(s-r) dr + \int_t^s K^2(s-r) dr$. Then 
\begin{equation*}
\EE[\exp(p \vol_s^{t,\btheta^t})] 
= \exp\left( \frac{p^2}{2} \left( \int_0^t \ak^2(s-r) dr + \int_t^s K^2(s-r) dr \right) \right). 
\end{equation*}
From \ref{hyp:kernel}, $\ak^2(s-r) \leq c^2_{K,\ak} K^2(s-r)$ for any $r \leq s$, and  hence
\begin{equation} \label{eq:ineq_moments_exp_v}
\begin{aligned}
\EE[\exp(p \vol_s^{t,\btheta^t})] 
\leq \ \exp\big(\frac{p^2}{2} (1+c_{K,\ak}^2) \int_0^s K^2(s-r) dr\big)  
\leq \ \exp\big( \frac{p^2}{2} (1+c_{K,\ak}^2) \| K \|_{L^2([0,T])}^2\big).
\end{aligned}
\end{equation}
Setting $\overline{m}^{(2)}_p = \frac{p^2}{2} (1+c_{K,\ak}^2)$, we obtain the desired result.
We consider now the $p$-moment of $X$. Using the standard inequality $|x+y+z|^p \leq 3^{p-1} (|x|^p + |y|^p + |z|^p)$, we have
\begin{equation*}
    \EE[ | \itgr_T^{t,\bitgr_t,\btheta^t} |^p] \leq 3^{p-1} \left\{ \EE[ | \bitgr_t |^p] + \EE\left[ \left| \int_t^T b(s,\vol_s^{t,\btheta^t}) ds \right|^p \right] + \EE\left[ \left| \int_t^T \sigma(s,\vol_s^{t,\btheta^t}) dB_s \right|^p \right] \right\}.
\end{equation*}
We apply the Jensen and Burkholder-Davis-Gundy inequalities to the last two terms and make use of the Fubini-Tonelli theorem to push the expectations inside the Lebesgue integrals. We obtain:
\begin{equation*}
    \EE[ | \itgr_T^{t,\bitgr_t,\btheta^t} |^p] \leq 3^{p-1} \left\{ \EE[ | \bitgr_t |^p] + T^{p-1} \int_t^T \EE[ | b(s,\vol_s^{t,\btheta^t}) |^p] ds + c_{p} T^{p/2-1} \int_t^T \EE[ | \sigma(s,\vol_s^{t,\btheta^t}) |^p] ds  \right\},
\end{equation*}
where $c_{p}$ is the BDG constant. Form  \ref{hyp:regularity},  $|b(s,x)| \leqs 1 + \kappa_{b} e^{\nu_b (x+s)}$ and $|\sigma(s,x)| \leqs 1 + \kappa_{\sigma} e^{\nu_{\sigma} (x+s)}$. It follows, using \eqref{eq:ineq_moments_exp_v}, that 
\begin{align*}
 \EE[ | b(s,\vol_s^{t,\btheta^t}) |^p] \leq 2^{p-1} \left( 1 + \kappa_{b}^p \EE[e^{ p \nu_b (\vol_s^{t,\btheta^t} + s) } ] \right) \leqs 2^{p-1} (1 + \kappa_{b}^p e^{ p \nu_b T } e^{\overline{m}_{p \nu_b}^{(2)} \| K\|_{L^2([0,T])}^2 } ).
\end{align*}
Moreover, the same argument applies to the term with $\sigma$:
\begin{align*}
  \EE[ | \sigma(s,\vol_s^{t,\btheta^t}) |^p] \leq 2^{p-1} (1 + \kappa_{\sigma}^p e^{ p \nu_{\sigma} T } e^{\overline{m}_{p \nu_{\sigma}}^{(2)} \| K\|_{L^2([0,T])}^2 } ).
\end{align*}
We use the same strategy to obtain a bound for $\EE[ | \bitgr_t |^p]$:
\begin{align*}
  \EE[ | \bitgr_t |^p] \leq & 3^{p-1} \EE \left\{ |\itgr_0|^p +  \left| \int_0^t b(s,\bvol_s) ds \right|^p + \left| \int_0^t \sigma(s,\bvol_s) dB_s \right|^p \right\} \\
  & \leq 3^{p-1} \left\{ \EE[ | \itgr_0 |^p] + T^{p-1} \int_0^T \EE[ | b(s,\bvol_s) |^p] ds + c_{p} T^{p/2-1} \int_0^T \EE[ | \sigma(s,\bvol_s) |^p] ds  \right\}.
\end{align*}
The growth control argument of $b$ and $\sigma$ apply as well, and we can use \eqref{eq:ineq_moments_exp_v} again thanks to the identity $\bvol_s = \vol_s^{s,\btheta^s}$, while $\EE[|\itgr_0|^p] < + \infty$ by hypothesis. We conclude by plugging the estimate of the three terms in the first inequality and setting $m^{(1)}_p = \max(m^{(2)}_{p \nu_b},m^{(2)}_{p \nu_{\sigma}})$. 

\subsection{Proof of Lemma \ref{lm:extended_continuity}} \label{sec:proof_extended_continuity}

Let $(X_n)_{n \in \NN}$ be a sequence of random variable in $L^q(\Omega)$ converging to $X \in L^q(\Omega)$. Using Cauchy-Schwarz inequality and Minkoswki's integral inequality, we have
\begin{align*}
  \EE[ | f(X_n) - f(X) | ] = & \EE \bigg[ |X_n - X| \Big| \int_0^1 f'(\lambda X_n + (1-\lambda) X) d \lambda \Big| \bigg] \\ & \leq \| X_n - X \|_{L^2(\Omega)} \int_0^1 \|  f'(\lambda X_n + (1-\lambda) X) \|_{L^2(\Omega)} d \lambda.
\end{align*}
By hypothesis, $|f'(\lambda X_n + (1-\lambda) X)| \leq 1 + | \lambda X_n + (1-\lambda) X |^p$, so it follows that
\begin{equation*}
  \|  f'(\lambda X_n + (1-\lambda) X) \|_{L^2(\Omega)} \leq (2 + 2 \EE[| \lambda X_n + (1-\lambda) X |^{2p} ])^{\frac{1}{2}} \leq (2 + 2^{2p} (\EE[|X_n|^{2p}] + \EE[|X|^{2p}]) )^{\frac{1}{2}}.
\end{equation*}
Since $X_n \to X$ in $L^q(\Omega)$ with $q \geq 2p$, the sequence $(X_n)_{n \in \NN}$ is bounded in $L^{2p}(\Omega)$ and $X \in L^{2p}(\Omega)$. Moreover, $X_n \to X$ in $L^q(\Omega)$ with $q \geq 2$ so in particular, $X_n \to X$ in $L^2(\Omega)$, leading to the expected result.

\bibliographystyle{plainnat}

\end{document}